\documentclass[11pt,reqno]{amsart}
\usepackage[T1]{fontenc}
\usepackage[utf8]{inputenc}
\usepackage{newtxtext,newtxmath}
\usepackage[a4paper,textwidth=15.4cm,textheight=23.2cm,centering]{geometry}
\usepackage{microtype,mathtools,enumitem,booktabs}
\usepackage[hidelinks,unicode]{hyperref}
\usepackage[nameinlink,noabbrev]{cleveref}
\allowdisplaybreaks[2]
\newtheorem{theorem}{Theorem}[section]
\newtheorem{proposition}[theorem]{Proposition}
\newtheorem{lemma}[theorem]{Lemma}
\newtheorem{corollary}[theorem]{Corollary}
\theoremstyle{definition}

\theoremstyle{remark}

\numberwithin{equation}{section}
\newcommand{\CP}{\mathbb{CP}}
\newcommand{\R}{\mathbb R}
\newcommand{\C}{\mathbb C}
\newcommand{\Z}{\mathbb Z}
\newcommand{\g}{\mathfrak g}
\newcommand{\s}{\mathfrak s}
\newcommand{\z}{\mathfrak z}
\newcommand{\kk}{\mathfrak k}
\newcommand{\LL}{\mathcal L}
\newcommand{\FF}{\mathcal F}
\newcommand{\OO}{\mathcal O}
\newcommand{\HH}{\mathcal H}
\newcommand{\DD}{\mathcal D}
\newcommand{\Ric}{\operatorname{Ric}}
\newcommand{\Hess}{\operatorname{Hess}}
\newcommand{\Vol}{\operatorname{Vol}}
\newcommand{\tr}{\operatorname{tr}}
\newcommand{\rank}{\operatorname{rank}}

\newcommand{\Ad}{\operatorname{Ad}}

\newcommand{\diver}{\operatorname{div}}
\newcommand{\dd}{\,d}
\newcommand{\dbar}{\bar\partial}
\newcommand{\Lie}{\mathcal L}
\newcommand{\RCD}{\mathrm{RCD}}
\newcommand{\GH}{\mathrm{GH}}

\newcommand{\HS}{\mathrm{HS}}
\newcommand{\op}{\mathrm{op}}
\newcommand{\FS}{\mathrm{FS}}
\newcommand{\KKS}{\mathrm{KKS}}
\newcommand{\norm}[1]{\left\|#1\right\|}
\newcommand{\ip}[2]{\langle #1,#2\rangle}
\newcommand{\Arxiv}[1]{\href{https://arxiv.org/abs/#1}{arXiv:#1}}
\newcommand{\DOI}[1]{\href{https://doi.org/#1}{doi:\nolinkurl{#1}}}
\title[Spectral almost rigidity]{Spectral almost rigidity  on K\"ahler manifolds\break with positive Ricci lower bound}
\author{Haohao Wang}
\date{}
\subjclass[2020]{53C55, 53C21, 58C40, 53C23, 32L10}
\keywords{K\"ahler manifold, spectral almost rigidity, eigenvalue pinching, collapse}
\hypersetup{pdftitle={Sharp spectral almost rigidity on K\"ahler manifolds with positive Ricci lower bound},pdfsubject={The Petersen--Aubry K\"ahler analogue, sharp noncollapsing, and symmetry loss},pdfkeywords={Kahler geometry, spectral almost rigidity, Petersen, Aubry, collapse, symmetry loss},pdflang={en-US}}
\begin{document}
\begin{abstract}
In this work, a sharp K\"ahler  spectral almost-rigidity theorem was established, resolving Conjecture~1.8 of Chu--Wang--Zhang. For compact K\"ahler manifolds satisfying $\Ric(\omega)\geq\omega$, pinching the first $n^2+3$ nonzero complex eigenvalues to one forces  the manifolds to be Gromov–Hausdorff close  to normalized complex projective space and determines the biholomorphism type.  The smaller index $n^2+1$ is the sharp threshold for noncollapsing. More generally, if a normalized measured limit has essential real dimension $r$, then the multiplicity of the critical eigenvalue is at most $r+\lfloor r/2\rfloor^2$, with equality attained in every dimension. A uniform energy estimate for kernel projections of complex gradient Gram matrices leads to a Lie algebra action on the whole spectral resolution. 
\end{abstract}
\maketitle
\section{Introduction}

\subsection{Eigenvalue comparison and almost rigidity}
Throughout the paper, manifolds are compact, connected, and without boundary. Write $\nu_k$ for the $k$th nonzero eigenvalue of the real Laplacian. On a Riemannian $m$-manifold, $m\geq2$, with $\Ric_g\geq(m-1)g$, the Lichnerowicz estimate gives $\nu_1\geq m$ \cite{Lichnerowicz}, and Obata's theorem characterizes the equality case as the unit sphere \cite{Obata}.

Petersen's spectral pinching theorem \cite{Petersen}, sharpened by Aubry \cite[Theorem~1]{Aubry}, gives the corresponding almost-rigidity statement. For every $\epsilon>0$ there is a $\delta(m,\epsilon)>0$ such that
\[
 \Ric_g\geq(m-1)g,\qquad \nu_m\leq m+\delta(m,\epsilon)
\]
imply that $M$ is diffeomorphic to $S^m$ and is $\epsilon$-close to the unit sphere in Gromov--Hausdorff distance. Petersen's original condition involved $m+1$ eigenvalues; Aubry obtained the optimal index $m$, whereas control of only $m-1$ eigenvalues does not imply metric closeness to the sphere \cite[\S7]{Aubry}. For the degree issue in the original spectral-map argument, see Petersen's erratum \cite{PetersenErratum} and Aubry's treatment in \cite[\S3]{Aubry}. No a priori volume lower bound is required in the sphere theorem.

Chu--Wang--Zhang explicitly proposed a K\"ahler analogue of this theorem \cite[\S1.3]{CWZ}. On a K\"ahler manifold $(M^n,g,J,\omega)$, we use
\begin{equation}\label{eq:conventions}
 \omega(V,W)=g(JV,W),\qquad
 \Delta f=g^{i\bar j}f_{i\bar j},\qquad \Delta_{\R}=2\Delta.
\end{equation}
The positive eigenvalues of $-\Delta$, repeated according to their real multiplicities, are denoted by $\lambda_1\leq\lambda_2\leq\cdots$; thus the real eigenvalues are $\nu_k=2\lambda_k$. The normalized Fubini--Study metric on $\CP^n$ is denoted by $\omega_n$, so that $\Ric(\omega_n)=\omega_n$. Its first complex eigenvalue is one and has multiplicity $n^2+2n$ \cite[Lemma~6.1]{CWZ}.

The K\"ahler eigenvalue comparison theorem gives $\lambda_1\geq1$ under $\Ric(\omega)\geq\omega$ \cite[Theorem~1.5]{CWZ}. In contrast with the Riemannian equality case, $\lambda_1=1$ does not characterize the comparison model. A K\"ahler--Einstein product can have many eigenfunctions with eigenvalue one without being a complex space form. Multiplicity is therefore essential, and the K\"ahler problem is not a direct specialization of the sphere theorem.

Chu--Wang--Zhang proved that $\lambda_{n^2+3}=1$ characterizes $(\CP^n,\omega_n)$ among compact K\"ahler manifolds with $\Ric(\omega)\geq\omega$ \cite[Theorem~1.6]{CWZ}. Their result is sharp, since the normalized product $\CP^{n-1}\times\CP^1$ has $n^2+2$ eigenfunctions with eigenvalue one. In Conjecture~1.8 of \cite{CWZ}, they asked for the corresponding almost rigidity under the Ricci lower bound alone. They established it for K\"ahler--Einstein metrics and, in the almost K\"ahler--Einstein setting, for metrics in the anticanonical class \cite[Theorems~1.9--1.10]{CWZ}. Without these assumptions, they identified possible volume collapse as an additional obstacle.

Our first result proves the conjecture under its original Ricci and spectral assumptions.
\begin{theorem}\label{thm:mainrigidity}
Let $n\geq2$. For every $\epsilon>0$ there exists $\delta(n,\epsilon)>0$ such that, if
\[
 \Ric(\omega)\geq\omega,\qquad
 \lambda_{n^2+3}(M,\omega)\leq1+\delta(n,\epsilon),
\]
then $M$ is biholomorphic to $\CP^n$ and
\[
 d_{\GH}\bigl((M,g),(\CP^n,g_n)\bigr)<\epsilon.
\]
\end{theorem}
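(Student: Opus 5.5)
The argument goes by contradiction and compactness. Suppose there are $\epsilon>0$ and a sequence $(M_i,\omega_i)$ with $\Ric(\omega_i)\geq\omega_i$ and $\lambda_{n^2+3}(M_i)\to1$, but with every $M_i$ either not biholomorphic to $\CP^n$ or $\epsilon$-far from $(\CP^n,g_n)$.

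\textbf{Step 1 (limit space and eigenfunctions).} The real Ricci bound is $\Ric\geq g$ and the real dimension is $2n$, so Bishop--Gromov together with Myers gives diameter $\leq\pi\sqrt{2n-1}$. After normalizing the measures, Gromov precompactness yields a subsequence converging in the measured Gromov--Hausdorff sense to an $\RCD(1,2n)$ space $(X,d,\mu)$. Choose $L^2$-orthonormal eigenfunctions $f^i_1,\dots,f^i_{n^2+3}$ with eigenvalues tending to $1$. Bochner's formula in the Kähler form (the proof of $\lambda_1\geq1$ in \cite{CWZ}) gives
\[
\int|\nabla^{0,1}\nabla^{0,1}f|^2=\int\bigl(\lambda^2-\lambda\bigr)|f|^2-\int(\Ric-\omega)(\nabla f,\overline{\nabla f}),
\]
with a small right-hand side. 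Hence each $\nabla^{1,0}f^i_k$ is an almost holomorphic vector field, and the Ricci defect is small in $L^1$ along these gradients. The real and imaginary parts of $\nabla^{1,0}f$ are gradient and almost Killing fields (the Killing field being $J\nabla f$).

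\textbf{Step 2 (Lie algebra and noncollapsing).} The Gram matrices $\int\langle\nabla^{1,0}f_k,\overline{\nabla^{1,0}f_l}\rangle$ and the Poisson-type brackets $\{f_k,f_l\}=\omega(\nabla f_k,\nabla f_l)$ should be almost closed under the span of the $f_k$ up to the eigenvalue-one space. This is where I would use the paper's uniform energy estimate on kernel projections of complex gradient Gram matrices. In the limit, the functions $f_k$ then span a space $E$ of eigenfunctions with eigenvalue one on $X$ of dimension at least $n^2+3$. The Hamiltonian fields $J\nabla f$ form a finite-dimensional Lie algebra $\kk$ of isometries of $X$ preserving the limit structure, and $E$ is a representation of $\kk$ resembling the moment map representation. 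By the dimension bound announced in the abstract, multiplicity $\leq r+\lfloor r/2\rfloor^2$ for essential dimension $r$, we get $r+\lfloor r/2\rfloor^2\geq n^2+3$. The case $r\leq 2n-1$ gives at most $2n-1+(n-1)^2=n^2$, so we must have $r=2n$ and the sequence is noncollapsed. This is the step I expect to be the main obstacle: one has to prove the multiplicity bound on a possibly collapsed limit where there is no Kähler structure, by passing the almost Lie algebra structure of the $\nabla^{1,0}f$ to the limit with uniform control.

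\textbf{Step 3 (rigidity of the limit).} Since $X$ is noncollapsed of dimension $2n$, Cheeger--Colding--Tian theory applies and $X$ is a Kähler $\RCD$ space, smooth off codimension four. Its eigenvalue-one space has dimension at least $n^2+3$, and the corresponding gradients are holomorphic on the regular part. The action of $\kk$ should then be that of $\mathfrak{su}(n+1)$, since $\dim\kk\geq n^2+3$ forces the orbit structure of $\CP^n$. The argument of \cite[Theorem~1.6]{CWZ}, now run on the regular part, makes $E$ the moment map coordinates: the map $X\to\mathfrak{su}(n+1)^*$ is an isometric embedding onto a coadjoint orbit with the Kirillov--Kostant--Souriau metric, which is $(\CP^n,\omega_n)$. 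Hence $d_{\GH}(M_i,\CP^n)\to0$, which contradicts the assumption for large $i$ in the metric case.

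\textbf{Step 4 (biholomorphism).} Use the spectral embedding. Once $M_i$ is GH-close to $\CP^n$ and noncollapsed, Colding's volume convergence and Cheeger--Colding give $M_i$ diffeomorphic to $\CP^n$. For the complex structure, the almost holomorphic fields $\nabla^{1,0}f_k$ are $L^2$-close to a copy of $\mathfrak{sl}(n+1,\C)$. Hörmander $L^2$ estimates give exact holomorphic sections of $K^{-1/(n+1)}$ (or of $K^{-1}$) close to the pulled-back Fubini--Study sections. This yields a holomorphic map $M_i\to\CP^N$ close to the model embedding. Since $M_i$ is a Fano manifold with $c_1$ divisible by $n+1$ and volume $c_1^n$ equal to that of $\CP^n$, Kobayashi--Ochiai identifies $M_i\cong\CP^n$. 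In carrying this out, the sharp input is Donaldson--Sun style partial $C^0$ estimates, which are used in the noncollapsed Kähler setting.
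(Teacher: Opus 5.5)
Your overall architecture matches the paper's: contradiction, a Lie algebra built on the whole critical cluster, the bound $q\leq r+\lfloor r/2\rfloor^2$ forcing $r=2n$ and hence noncollapse, identification of the limit, then a separate biholomorphism argument. Step~3, however, contains a genuine gap.

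Under $\Ric(\omega)\geq\omega$ alone, no available theory makes a noncollapsed limit a ``K\"ahler RCD space, smooth off codimension four''. Codimension four requires two-sided Ricci bounds. Codimension-two singularities genuinely occur under a lower bound alone: $\CP^{n-1}$ times a spherical suspension of a short circle is a noncollapsed limit of smooth metrics with $\Ric\geq\omega$. Even the regular set carries no smooth metric or complex structure on which ``holomorphic gradients'', or the integration-by-parts proof of Chu--Wang--Zhang's Theorem~1.6, make sense.

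Likewise, the claims that the fields $J\nabla f$ give a Lie algebra of isometries of $X$, and that a large algebra ``forces the orbit structure of $\CP^n$'', are exactly what has to be proved; Hamiltonian fields do not pass to the limit by themselves. The ingredients you are missing are:
\begin{enumerate}[label=(\alph*),leftmargin=2.5em]
\item derivations $D_a$ on \emph{every} limit eigenspace, integrated through multiplicativity and the Sobolev-to-Lipschitz property to a measure-preserving isometric action of $S$ (Lemma~\ref{lem:Dproperties}, Proposition~\ref{prop:globalaction});
\item center exclusion for $q>n^2$, giving $\rank P=\rank G=2n$ and $\ker G=\ker P$ almost everywhere (Corollary~\ref{cor:centerexclusion}); this, not bracket closure, is where the kernel-projection estimate is used, while closure comes from Lemma~\ref{lem:poisson} and the spectral gap of the full cluster;
\item the resulting fact that $U(Z)$ is a single $2n$-dimensional coadjoint orbit (Lemma~\ref{lem:oneorbit});
\item transitivity, from the Lipschitz surjection of every orbit onto that orbit together with Hausdorff-measure continuity (Proposition~\ref{prop:noncolltrans});
\item smoothness of the limit from homogeneity (Corollary~\ref{cor:homogeneous});
\item reconstruction of $J_\infty$ and $\omega_\infty=U^*\omega_{\KKS}$ from $P$, with integrability checked on root spaces and $\Ric=\omega$ from the Bochner equality case (Proposition~\ref{prop:kahlerreconstruction}).
\end{enumerate}
Only on this smooth K\"ahler limit is the exact rigidity theorem applied.

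Step~4 also fails as written. The curvature of $K_M^{-1}$ is $\Ric(\omega)$, not $\omega$, so no polarized partial $C^0$ estimate under a lower Ricci bound is available. Neither the divisibility of $c_1(M)$ by $n+1$ needed for Kobayashi--Ochiai nor the identification of $\Vol_g(M)$ with the anticanonical volume is justified, since $[\omega]$ need not be proportional to $c_1(M)$. The paper instead combines volume convergence with the almost-maximal-volume theorem applied to $\omega/(n+1)$ (Lemma~\ref{lem:biholomorphicgap}).

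Your own first sentence of Step~4 would also suffice once Step~3 is repaired. Cheeger--Colding stability makes $M_i$ diffeomorphic to $\CP^n$, and a K\"ahler manifold diffeomorphic to $\CP^n$ with $c_1>0$ is biholomorphic to it by Hirzebruch--Kodaira.

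Finally, Step~2 simply invokes Theorem~\ref{thm:dimensionintro}. That is logically admissible, because its proof does not use the main theorem, but it means the step supplies no argument of its own.
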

No volume lower bound or assumption on $[\omega]$ is imposed. \Cref{thm:mainrigidity} resolves \cite[Conjecture~1.8]{CWZ} and gives the sharp K\"ahler counterpart of the Petersen--Aubry sphere theorem. The optimal pinching index is $n^2+3$, rather than the real dimension: the product above excludes $n^2+2$. The result is qualitative; no explicit power-law distance estimate, as in \cite[Theorem~1]{Aubry}, is asserted.

There is a different, smaller threshold for noncollapsing. We use Riemannian volume, so that $\Vol_g(M)=\int_M\omega^n/n!$.
\begin{theorem}\label{thm:noncollapseintro}
For each $n\geq2$, there are $\delta_0(n)>0$ and $v_0(n)>0$ such that, if
\[
 \Ric(\omega)\geq\omega,\quad \lambda_{n^2+1}(M,\omega)\leq1+\delta_0(n),
\]
then $\Vol_g(M)\geq v_0(n).$

The index $n^2+1$ is optimal. There are smooth K\"ahler metrics on $\CP^{n-1}\times\CP^1$ with $\Ric(\omega_j)\geq\omega_j$, $\lambda_{n^2}(\omega_j)\to1$, and $\Vol_{g_j}(M)\to0$.
\end{theorem}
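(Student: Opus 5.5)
The argument has two parts: a contradiction/compactness argument for the volume bound, and an explicit collapsing family on $\CP^{n-1}\times\CP^1$ for optimality.

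\textbf{Volume bound.} Suppose the conclusion fails. Then there are K\"ahler manifolds $(M_j^n,\omega_j)$ with $\Ric(\omega_j)\geq\omega_j$, $\lambda_{n^2+1}(\omega_j)\to1$ and $\Vol_{g_j}(M_j)\to0$.

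By Myers' theorem the diameters are bounded, so Gromov precompactness applies to the normalized metric measure spaces $(M_j,d_j,\mu_j)$, where $\mu_j=\Vol_{g_j}/\Vol_{g_j}(M_j)$. After passing to a subsequence they converge in the measured Gromov--Hausdorff sense to an $\RCD(2n-1,2n)$ space $(X,d,\mu)$.

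Spectral convergence (Cheeger--Colding, Gigli--Mondino--Savaré) gives at least $n^2+1$ linearly independent eigenfunctions on $X$ with real eigenvalue $2$. Since $\Vol\to0$, the limit is collapsed, so its essential dimension satisfies $r\leq 2n-1$.

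The core of the argument is the multiplicity bound announced in the abstract: the critical eigenvalue on a limit of essential dimension $r$ has multiplicity at most $r+\lfloor r/2\rfloor^2$. With $r=2n-1$ this gives
\[
2n-1+(n-1)^2=n^2,
\]
which contradicts having $n^2+1$ eigenfunctions. So the task reduces to proving this bound. I would do it as follows.

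\begin{enumerate}[label=(\roman*)]
\item \emph{Holomorphic gradients.} For each eigenfunction $f$ with $\lambda\approx1$, integrate the Bochner formula under $\Ric\geq\omega$. This gives $\|\nabla^{0,1}\nabla^{1,0}f\|_{L^2}$ small, with the $(1,1)$ Hessian almost proportional to $-f\,\omega$. Hence $\nabla^{1,0}f$ is almost holomorphic and $J\nabla f$ is almost Killing.
\item \emph{Lie algebra structure.} Form the complex gradient Gram matrix $G_{ab}=\ip{\nabla^{1,0}f_a}{\nabla^{1,0}f_b}$. The uniform energy estimate for its kernel projections should show that the brackets $[J\nabla f_a,J\nabla f_b]$ are, up to small error, again gradients of eigenfunctions. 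In the limit the eigenspace then carries a Lie algebra $\g$ acting by almost-isometries.
\item \emph{Counting.} On the limit, the span of the gradients at regular points has rank at most $r$; these directions contribute at most $r$ eigenfunctions. The remaining eigenfunctions are those whose gradients lie in the kernel of the rank map, and they form an isotropy subalgebra acting on an almost-complex tangent of complex dimension at most $\lfloor r/2\rfloor$. That subalgebra sits inside $\mathfrak u(\lfloor r/2\rfloor)$, so it has dimension at most $\lfloor r/2\rfloor^2$.
\end{enumerate}

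Step (iii) is the main obstacle: the almost-complex structure has to survive in the collapsed limit well enough to bound the isotropy by $\mathfrak u$ rather than by $\mathfrak{so}(r)$. My first attempt would be to control this through the $L^2$ convergence of $J\nabla f_a$ as vector fields on $X$, using the energy estimate of step (ii).

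\textbf{Optimality.} On $\CP^{n-1}\times\CP^1$ take
\[
\omega_j=\omega_{n-1}\oplus t_j\,\omega_{\CP^1},\qquad t_j\to0,
\]
where $\omega_{\CP^1}$ is Fubini--Study normalized with Ricci form equal to itself. Then $\Ric(\omega_j)=\omega_{n-1}\oplus\omega_{\CP^1}\geq\omega_j$ for $t_j\leq1$.

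The factor $\CP^{n-1}$ contributes $(n-1)^2+2(n-1)=n^2-1$ eigenfunctions with eigenvalue exactly $1$, and the $\CP^1$ eigenvalues become large as $t_j\to0$. So as it stands only $\lambda_{n^2-1}=1$ holds, and we need one more eigenvalue tending to $1$.

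To get it, I would perturb by replacing $t_j\omega_{\CP^1}$ with a small $S^1$-invariant rotationally symmetric metric whose Ricci form stays $\geq\omega_j$ and whose first eigenvalue tends to $1$. This is a thin ``football'' metric with a Ricci lower bound, for which the ODE for rotationally symmetric metrics on $S^2$ allows $\lambda_1\to1$ with small area. Its first eigenvalue then supplies the $n^2$-th eigenvalue, so $\lambda_{n^2}(\omega_j)\to1$. The volume is $\Vol(\CP^{n-1},\omega_{n-1})$ times the area of the small factor, which tends to $0$.
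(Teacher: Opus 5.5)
Your top-level reduction is sound, and it is the reasoning the paper records in Corollary~\ref{cor:dimensionequality}. Once $q\le Q(r)$ is known, a collapsed limit has $r\le\dim_{\HH}Z\le2n-1$, and $Q(2n-1)=n^2<n^2+1$; the first inequality needs the De Philippis--Gigli dimension gap, not just the word ``collapsed''. The paper proves this theorem by a different route, through the high-multiplicity branch:
\begin{enumerate}[label=(\alph*),leftmargin=2.5em]
\item $q>n^2$ removes the center and forces $\rank P=\rank G=2n$ (Corollary~\ref{cor:centerexclusion});
\item the evaluation map sends every $S$-orbit onto one $2n$-dimensional coadjoint orbit (Lemma~\ref{lem:oneorbit}), hence $\HH^{2n}(Z)>0$;
\item volume continuity identifies $\HH^{2n}(Z)$ with $\lim_j\Vol_{g_j}(M_j)$ (Proposition~\ref{prop:noncolltrans}).
\end{enumerate}
That route also gives transitivity and a smooth limit. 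Your optimality family is the paper's $\CP^{n-1}\times C_\beta$. What you omit there is rounding the football's cone tips while keeping $K\ge1$, and the Rayleigh-quotient bound for the height function (Proposition~\ref{prop:sphere}). The genuine gap is that everything rests on $q\le Q(r)$, and your step (iii) does not prove it.

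\textbf{No complex structure on the limit tangent.} A collapsed limit has no almost-complex structure on its tangent module for the isotropy to preserve, and $L^2$ convergence of $J_j\nabla f$ cannot produce one. The limiting action fields $Y_\xi$ satisfy only $|Y_\xi|\le|\nabla u_\xi|$. In your own optimality family, the sphere's central mode keeps gradient energy $2$ yet acts trivially on the whole limit $\CP^{n-1}\times I$, whose essential dimension $2n-1$ is odd. The rigidity ``an isometry fixing a point with trivial differential is the identity'', which is what puts an isotropy algebra inside $\mathfrak u$, is also not available on the measurable tangent module of an $\RCD$ space. The paper keeps the complex structure only through the strong $L^2$ limit $H=G+iP\ge0$, $\rank_\C H\le n$, with $P_{ab}=u_{[e_a,e_b]}$. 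It then does the $\mathfrak u(m)$ count on the smooth coadjoint orbit of the evaluation covector $\ell_x\in\s^*$, of real dimension $2m=\rank P(x)$, which gives $\dim\s\le2m+m^2$.

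\textbf{The center is never controlled.} One has $\ker G(x)\subseteq\ker P(x)=\z\oplus\s_{\ell_x}$, with $\s_{\ell_x}$ the coadjoint stabilizer of $\ell_x|_{\s}$. Without further input this yields only $q\le r+\dim\z+m^2$. You need $\ker G(x)\cap\z=0$ almost everywhere, i.e.\ commuting critical gradients must be independent a.e. This is Theorem~\ref{thm:commuting}, and it is where the kernel-projection estimate is really used. Bracket closure, by contrast, comes from the Poisson identity of Lemma~\ref{lem:poisson} together with the spectral gap after the \emph{full} cluster. In Theorem~\ref{thm:commuting}, the uniform energy bound for $\tau(G+\tau I)^{-1}$ makes the kernel projection $\Pi_0$ Sobolev. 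The commuting-Hessian identity then gives $d\Pi_0=0$, and $\int G=2I$ forces $\Pi_0=0$.

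The same estimate gives Corollary~\ref{cor:levelsets}. This makes every simple component of $\ell_x$ nonzero, hence the $\s$-isotropy faithful; unique continuation is not available on general $\RCD$ spaces as a substitute. With these two ingredients your count does close: $\ker G(x)$ projects injectively into $\s_{\ell_x}$, which has dimension at most $m^2$, and $2m=\rank P\le\rank G\le r$.

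\textbf{Two smaller errors.} First, $\Ric(\omega)\ge\omega$ means $\Ric_g\ge g$, so the limit is $\RCD(1,2n)$, not $\RCD(2n-1,2n)$; under the latter, Lichnerowicz would forbid real eigenvalue $2$. Second, in step (i) only the $(2,0)$-Hessian and the Ricci defect are small. The trace-free $(1,1)$-Hessian has squared $L^2$ norm $(1-1/n)\int(\Delta f)^2$, so it is not close to a multiple of $\omega$.
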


The thresholds in \cref{thm:mainrigidity,thm:noncollapseintro} distinguish exclusion of collapse from identification of the projective-space model. As in the Riemannian sphere theorem, noncollapsing is a consequence rather than an assumption; the additional point here is the sharp lower index $n^2+1$. The proof separates noncollapsing from model identification. It first constructs a Lie algebra from the whole critical spectral cluster, including the cluster directions which are not specified in the pinching assumption. A kernel-projection estimate then rules out a varying nullspace for commuting spectral gradients. At high multiplicity, this eliminates the center and gives full rank. The limiting action is constructed on every eigenspace, and not merely on the critical one. Its coadjoint image provides both noncollapsing and transitivity. Only after these steps do we recover a smooth K\"ahler structure and use the exact rigidity theorem of \cite{CWZ}.

\subsection{Dimension and disappearing symmetries}
For a sequence as above, set
\[
 \mu_j=\frac{\omega_j^n}{\int_{M_j}\omega_j^n}.
\]
After passing to a subsequence, the metric measure spaces converge to a compact $\RCD(1,2n)$ probability space $(Z,d,\mu)$. We write $\LL=-\Delta_\mu$ for its nonnegative real Laplacian and
\[
 E=\ker(\LL-2),\qquad q=\dim_{\R}E.
\]
The essential dimension $r$ is the almost-everywhere dimension of the tangent module. This dimension is constant by \cite{BS}. For a point, we set $r=0$ and $q=0$. We do not identify essential and Hausdorff dimensions in general.

\begin{theorem}\label{thm:dimensionintro}
Every such limit satisfies
\begin{equation}\label{eq:dimensionintro}
 q\leq Q(r):=r+\left\lfloor\frac r2\right\rfloor^2.
\end{equation}
For every fixed original complex dimension $n$ and every $0\leq r\leq2n$, equality is attained by a sequence of smooth K\"ahler manifolds with $\Ric(\omega_j)\geq\omega_j$. The equality models can be chosen as
\[
 Z_{2k}=\CP^k,\qquad Z_{2k+1}=\CP^k\times I,
 \qquad I=\left([0,\pi],dt^2,\tfrac12\sin t\,dt\right),
\]
where the projective factors have their normalized metrics and probability measures.
\end{theorem}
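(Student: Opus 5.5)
Two things have to be shown: the upper bound $q\le Q(r)$ on every limit, and equality in every dimension $0\le r\le 2n$ via explicit sequences.

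\emph{Structure of $E$.} I first set up the limit structure. The limits $(Z,d,\mu)$ are $\RCD(1,2n)$ spaces, and the eigenvalue $2$ of $\LL$ corresponds to the complex eigenvalue one. On the smooth manifolds $M_j$, take pinched eigenfunctions $f$. The Kähler Bochner formula with $\Ric\ge\omega$ gives $L^2$-smallness of the $(0,2)$-Hessian $\nabla\bar\nabla\bar f$. Hence the $(1,0)$-gradients $\nabla^{1,0}f$ are almost holomorphic, and $J\nabla f$ is almost Killing. I would pass this to the limit, using the Bakry--Émery/Gigli calculus on RCD spaces, to obtain the following:
\begin{itemize}
\item[(1)] every $u\in E$ has $J\nabla u$ a Killing field on $Z$ (a measure-preserving isometric flow), so $E$ embeds linearly into the Lie algebra $\kk$ of the compact isometry group $G$ of $(Z,d,\mu)$;
\item[(2)] the map $u\mapsto J\nabla u$ is injective on $E$, since a constant $u$ with $\LL u=2u$ vanishes.
\end{itemize}
I would then study the span $\g$ of these fields together with their brackets; by the energy estimate mentioned in the introduction, brackets of critical gradients stay in the span. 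The compact group $G$ acts isometrically on $Z$, which has essential dimension $r$. Let $V=E+\{$the real gradient fields $\nabla u\}$. On the regular set, the gradients $\nabla u$ span at most $r$ directions at a point, because the tangent module has rank $r$ almost everywhere.

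\emph{Counting.} For the dimension count I would use the evaluation map at a generic regular point $x$: $E\to T_xZ\cong\R^r$, $u\mapsto\nabla u(x)$. Its kernel $E_x$ consists of eigenfunctions with a critical point at $x$. The image has dimension at most $r$. For the kernel I would use the Hessians at $x$. In the smooth Kähler model, an eigenfunction with $\nabla u(x)=0$ has Hessian $J$-invariant (Hermitian), with trace constrained by the eigenvalue, and its value $u(x)$ is determined by the Hessian trace. One then obtains an injection $E_x\hookrightarrow\{$Hermitian forms on the complex part $\C^k$ of $T_x$, $k=\lfloor r/2\rfloor\}$, of dimension $k^2$. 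The odd leftover real direction, which is present when $r$ is odd, carries no $J$-partner, so it contributes no extra Hermitian parameter. Injectivity comes from unique continuation: a limiting eigenfunction with $u(x)=0$, $\nabla u(x)=0$ and vanishing Hessian at a regular point vanishes. I would prove this via the Killing action: $J\nabla u$ then vanishes to second order at a fixed point, hence is trivial by the isometric-action linearization. Altogether this gives $q\le r+k^2$.

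\emph{Main obstacle.} The step I expect to be hardest is making the Hessian and $J$-invariance argument rigorous at generic points of a possibly collapsed RCD limit, where no complex structure survives directly. My first approach would replace pointwise Hessians by the isotropy representation. The stabilizer $G_x$ acts on $T_xZ$ preserving the limiting almost-complex part (inherited as the limit of $J$ on the horizontal distribution of the collapsing directions), so $G_x\subset U(k)\times O(r-2k)$. The coadjoint/moment map $\Phi:Z\to E^*$, $\Phi(x)(u)=u(x)$, is $G$-equivariant. The dimension of $E$ is then bounded by $\dim$(orbit) $+\dim$(isotropy image). The orbit directions account for at most $r$ (the tangent part), and the isotropy directions for at most $\dim\mathfrak u(k)=k^2$ after removing the part acting trivially, which is killed by the kernel-projection estimate.

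\emph{Equality.} For equality, $\CP^k$ with normalized metric has $q=k^2+2k=Q(2k)$ by \cite[Lemma~6.1]{CWZ}. For $\CP^k\times I$, the eigenvalue-$2$ functions of $\LL$ are the first eigenfunctions of $\CP^k$ together with $\cos t$ on $I$ (with $\LL\cos t=2\cos t$ for the measure $\tfrac12\sin t\,dt$ and weight dimension $2$). The total is $k^2+2k+1=Q(2k+1)$. To realize these as limits in complex dimension $n$, I would take $\CP^k\times\CP^{n-k}$ with Kähler metrics that are $\omega_k$ on the first factor. On the second factor I would use $U(n-k)$-invariant (Calabi ansatz) metrics with $\Ric\ge\omega$ that collapse $\CP^{n-k}$ either to a point (case $r=2k$) or to a segment with measure $\tfrac12\sin t\,dt$ (case $r=2k+1$, via the momentum coordinate of a circle action with shrinking fibres). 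I would then verify the Ricci bound and the convergence of the measures by direct ODE computation in the momentum profile.
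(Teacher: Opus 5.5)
\textbf{There is a genuine gap in the upper bound.}

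Your claim (2), that $u\mapsto J\nabla u$ embeds $E$ injectively into the isometry algebra of $Z$, is false on collapsed limits. Your own odd equality model refutes it. On $\CP^k\times I$, the critical function $\cos t$ is the limit of the moment map of the collapsing circle action. Its Hamiltonian flow is that circle action, which keeps its full gradient energy yet acts trivially on $Z$.

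There is no $J$ on $Z$. The limit of $J\nabla u_j$ is only a field $Y_u$ with $|Y_u|\leq|\nabla u|$, and the kernel $\kk=\{\xi:D_\xi=0\}$ can have dimension $\min\{r,2n-r\}$. Such directions are invisible to any orbit-plus-isotropy count on $Z$, so that count cannot bound $q$. The kernel-projection estimate does not remove them. What it proves is that commuting (in particular central) directions have pointwise independent gradients a.e., so they must be charged against the real budget $r$.

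Your pointwise step is also unsupported. The kernel $\ker G(x)$ of $u\mapsto\nabla u(x)$ makes sense a.e., but on an $\RCD$ limit:
\begin{itemize}
\item Hessians are only $L^2$ tensors, and there is no complex structure with respect to which they could be Hermitian.
\item Unique continuation is not available.
\item The inclusion $G_x\subset U(k)\times O(r-2k)$ presupposes a limiting almost-complex structure on a horizontal distribution. You do not construct it, and it is not available in general.
\item Faithfulness of isotropy on a tangent cone is not automatic either.
\end{itemize}

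\textbf{The missing idea} is to place the complex geometry on auxiliary smooth objects rather than on $Z$. The paper proceeds as follows.
\begin{itemize}
\item It makes $E$ a compact Lie algebra $\g=\s\oplus\z$ via projected Poisson brackets. Closure comes from the Poisson identity and the spectral gap of the whole cluster, not from the kernel estimate.
\item It replaces $J$ by the limiting complex Gram matrix $H=G+iP\geq0$, with $P_{ab}=u_{[e_a,e_b]}$ and $\rank_\C H\leq n$.
\item The central Gram block is positive definite (kernel estimate) and the central rows of $P$ vanish. Lemma~\ref{lem:rank}(ii) then gives $c+2m\leq\rank G\leq r$, where $2m=\rank P(x)$.
\item The semisimple part is counted on the coadjoint orbit of the evaluation covector $\ell_x\in\s^*$. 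This orbit has dimension $2m$ and a canonical invariant K\"ahler structure. The action is faithful because $u_\xi\neq0$ a.e.\ for a nonzero $\xi$ in each simple ideal (Corollary~\ref{cor:levelsets}).
\item Hence $d\leq2m+m^2$ and $q\leq r+m^2\leq r+\lfloor r/2\rfloor^2$.
\end{itemize}
The isotropy bound is $\mathfrak u(m)$, tied to $\rank P$, not $\mathfrak u(\lfloor r/2\rfloor)$ read off a tangent space. The trade-off $c+2m\leq r$ is what makes the count close.

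\textbf{Equality examples.} Your multiplicity counts on $\CP^k$ and $\CP^k\times I$ are correct. For even $r$, plain scaling $\omega_k+\beta\omega_{n-k}$ suffices. For odd $r$, your Calabi-ansatz collapse of $\CP^{n-k}$ onto a segment with $\Ric\geq\omega$ is unverified. The paper only needs a smooth $S^2$ with $K\geq1$ collapsing to $I$, times a shrinking $\CP^l$. The delicate point there is smoothness at the poles, since the profile $\beta(1-x^2)$ alone has cone angle $2\pi\beta$.
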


In particular, retaining $p$ critical modes forces
\begin{equation}\label{eq:inverseprofile}
 r\geq 2\lfloor\sqrt p\rfloor-\mathbf1_{\{p\text{ is a square}\}}.
\end{equation}
Thus the dimension lost under collapse is quantitatively constrained by the number of surviving modes. All these dimension bounds are sharp in the sense of retaining at least $p$ modes.

The critical space carries a compact-type metric Lie algebra $\g=\s\oplus\z$. Its full spectral action is represented by derivations $D_\xi$ and vector fields $Y_\xi$ on $Z$. Define
\[
 G_{ab}=\ip{\nabla u_a}{\nabla u_b},\qquad
 B_{ab}=\ip{Y_a}{Y_b},\qquad \DD=G-B,
\]
and let $\kk=\{\xi:D_\xi=0\}$.
\begin{theorem}\label{thm:lossintro}
For every complete critical cluster, the derivations integrate to a continuous measure-preserving isometric action of $S\times\R^{\dim\z}$, where $S$ is the simply connected compact group with Lie algebra $\s$. Moreover,
\begin{equation}\label{eq:lossintro}
 \kk\subseteq\z,\qquad \DD\geq0,\qquad
 \rank\DD\leq2n-r\quad\mu\text{-a.e.},\qquad
 \dim\kk\leq\min\{r,2n-r\}.
\end{equation}
The last bound is attained for every $r$. If the critical gradients span the tangent module, then $\kk=\z$.
\end{theorem}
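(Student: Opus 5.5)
We would prove \cref{thm:lossintro} by working entirely on the limit space $(Z,d,\mu)$ and using only structure inherited from the smooth approximants. \textbf{Step one} produces the Lie algebra. On each $M_j$, take an $L^2$-orthonormal basis $u_a^j$ of the critical cluster (eigenvalues in $[1,1+\delta_j]$). Set $Y_a^j=J\nabla u_a^j$. The Bochner identity with $\Ric\geq\omega$ gives
\[
\int\bigl(|\nabla\bar\nabla u|^2+(\Ric-\omega)(\nabla u,\bar\nabla u)\bigr)=O(\delta_j),
\]
so $\nabla^{1,0}u_a^j$ is almost holomorphic. Hence $Y_a^j$ is almost Killing in $L^2$, and the brackets $[Y_a^j,Y_b^j]$ almost lie in the span of the cluster's $J$-gradients. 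The structure constants $c_{ab}^c=\lim\int \omega(\nabla u_a^j,\nabla u_b^j)u_c^j$ then converge. Almost skew-symmetry of $\mathrm{ad}$ for the $L^2$ pairing gives a compact-type metric Lie algebra $\g=\s\oplus\z$ on $E$.

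\textbf{Step two} extends the action to every eigenspace. For each eigenvalue of $\LL$ we define $D_\xi f=\lim \ip{Y_\xi^j}{\nabla f^j}$ on that spectral band. The almost-Killing estimate, applied spectral band by spectral band, shows that $D_\xi$ commutes with $\LL$, is skew, and is a derivation of the algebra of Lipschitz functions. The Leibniz defect is controlled by $\|\mathcal L_{Y}g\|_{L^2}\to0$ together with uniform $L^\infty$ bounds on eigenfunctions. A derivation commuting with $\LL$ that is skew on an $\RCD$ space is a Killing field in the Gigli sense. We then integrate: $e^{tD_\xi}$ acts unitarily on each finite-dimensional eigenspace and preserves the Cheeger energy. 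Preserving products makes it an algebra automorphism of the test functions, and by the $\RCD$ Sobolev-to-Lipschitz property it comes from a measure-preserving isometry. The representation of $\g$ integrates to $S\times\R^{\dim\z}$, since $S$ is simply connected and the abelian part integrates to $\R^{\dim\z}$. Continuity of the action follows from equicontinuity of isometries.

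\textbf{Step three} proves the inequalities. Pointwise in the limit,
\[
\ip{\nabla u_a}{\nabla u_b}-\ip{J\nabla u_a}{\ldots}
\]
becomes $G-B$, where $B$ is the Gram matrix of the limiting Killing fields $Y_a$. On $M_j$, $Y_a^j=J\nabla u_a^j$ and $|J\nabla u|=|\nabla u|$. Any loss therefore comes from the vector fields $Y_a^j$ converging to $Y_a$ only weakly, while $\nabla u_a^j$ converges strongly in the sense of Ambrosio--Honda. Lower semicontinuity of the Gram matrix under weak convergence gives $\DD\geq0$. The rank bound $\rank\DD\leq 2n-r$ holds because $J$ maps the $r$-dimensional limit tangent directions into the $2n$-dimensional fibre. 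The defect is supported on the directions that collapse, so it is the Gram matrix of projections onto a complement of dimension at most $2n-r$.

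\textbf{Step four} handles the kernel $\kk$ and attainment. If $\xi\in\kk$, then $Y_\xi=0$, so $G_{\xi\xi}=\DD_{\xi\xi}$. For $\xi\in\s$, $D_\xi=0$ would force $[\xi,\eta]=0$ for all $\eta$, which contradicts semisimplicity; hence $\kk\subseteq\z$. Elements of $\kk$ have gradients in the range of $\DD$. Independence of the $\nabla u_\xi$ as module elements then gives $\dim\kk\leq 2n-r$. Since these gradients are also tangent, $\dim\kk\leq r$. If the critical gradients span the tangent module, we still need $\z\subseteq\kk$. For central $\xi$, the field $Y_\xi$ commutes with all of $\g$ and is orthogonal to the spanned tangent directions in the way forced by $B$, so $Y_\xi=0$. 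Attainment for every $r$ comes from collapsing the extra factor in $\CP^k\times(\text{collapsing torus or }\CP^1)$ models, as in \cref{thm:noncollapseintro}.

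\textbf{Main obstacle.} The hardest step is the uniform kernel-projection estimate behind $\rank\DD\leq 2n-r$, namely showing that nullspaces of commuting gradient Gram matrices do not vary. We would attempt it first with a quantitative Bochner estimate on the projected Gram matrix $P_{\ker}GP_{\ker}$.
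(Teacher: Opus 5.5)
Your Steps one and two follow the paper's construction in outline. Deriving $\DD\geq0$ from weak lower semicontinuity of $J_j\nabla u_{j,a}\rightharpoonup Y_a$ is a workable substitute for the paper's route, which passes the pointwise bound $|\sum_i a_iX_{j,\xi}f_{j,i}|\leq|\nabla u_{j,\xi}|\,|\sum_i a_i\nabla f_{j,i}|$ to the limit and then uses module duality. One point in Step two is missing: Sobolev-to-Lipschitz does not by itself produce a point map. You first need $T_g$ to preserve supremum norms, so that it extends to a unital automorphism of $C(Z)$ and is composition with a homeomorphism; only then does energy equivariance make it an isometry.

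\textbf{First gap: the rank bound.} The genuine gap is $\rank\DD\leq2n-r$. Step three offers only a slogan, and it cannot be made into a proof by weak convergence. Pointwise rank bounds do not survive weak limits, since averages of rank-one matrices can have full rank, and there is no $J$ on $Z$. The missing device is to transport a rank constraint on \emph{scalar} data that converge strongly. On a measurable piece, choose finite spectral functions $\psi_1,\dots,\psi_r$ whose gradients form a basis of the tangent module. On $M_j$, form the joint Gram matrix of the vectors $\nabla\psi_{j,\alpha}$ and $J_j\nabla u_{j,a}$:
\[
 \begin{pmatrix}A_j&C_j^T\\C_j&G_j\end{pmatrix}\geq0,\qquad \rank\leq2n.
\]
Its mixed entries are $X_{j,a}\psi_{j,\alpha}$. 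These converge strongly to $C_{a\alpha}=D_a\psi_\alpha$ by the very definition of $D_a$, and $G_j\to G$ strongly by \eqref{eq:strongGram}. Hence positivity and the vanishing of all $(2n+1)$-minors pass to the limit, even though the fields $J_j\nabla u_{j,a}$ converge only weakly. Since $Y_a$ lies in the span of the $\nabla\psi_\alpha$, one has $B=CA^{-1}C^T$. The Schur complement of $A$ then gives $G-B\geq0$ and $r+\rank(G-B)\leq2n$. This is Proposition~\ref{prop:lossrank}, and it uses no kernel-projection estimate.

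\textbf{Second gap: where the kernel estimate is needed.} The kernel-projection estimate belongs in Step four, where you did not supply it. Passing from $\DD_{\kk\kk}=G_{\kk\kk}$ to $\dim\kk\leq\rank\DD$, and likewise to $\dim\kk\leq r$, requires $G_{\kk\kk}>0$ $\mu$-a.e. That means the gradients $\nabla u_\xi$, $\xi\in\kk$, must be pointwise independent almost everywhere. Linear independence of the $u_\xi$ as functions, or as elements of $L^2(TZ)$, is compatible with a nullspace that moves from point to point, and that is exactly what must be excluded. This is Theorem~\ref{thm:commuting}, whose essential input is commutativity, available because $\kk\subseteq\z$. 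The argument runs as follows:
\begin{enumerate}[label=(\roman*),leftmargin=2.5em]
\item $[\mathfrak a,\mathfrak a]=0$ gives the symmetric Hessian identity \eqref{eq:commutingHessian}, hence $(dG)\Pi_0=\Pi_0(dG)=0$.
\item Lemma~\ref{lem:projection} gives $\int|d\Pi_\tau|^2\leq4k$ for $\Pi_\tau=\tau(G+\tau I)^{-1}$, taking $j\to\infty$ before $\tau\downarrow0$.
\item Together these force $d\Pi_0=0$, so $\Pi_0$ is constant, and integrating $G\Pi_0=0$ shows $\Pi_0=0$.
\end{enumerate}
Commutativity cannot be dropped. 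For the full critical algebra of $\CP^n$, the nullspace of $G$ at a point is its isotropy algebra, which moves. Your $P_{\ker}GP_{\ker}$ sketch does not say where commutativity enters.

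\textbf{Smaller points.}
\begin{enumerate}[label=(\roman*),leftmargin=2.5em]
\item For $\kk\subseteq\z$, argue for arbitrary $\xi$ via $D_\xi u_\eta=u_{[\xi,\eta]}$. Restricting to $\xi\in\s$ misses mixed elements.
\item Under the spanning hypothesis, what kills a central $Y_\xi$ is the identity $\ip{Y_\xi}{\nabla u_b}=u_{[\xi,e_b]}=0$.
\item For sharpness, collapsing tori are excluded by $\Ric\geq\omega$, and a single collapsing factor gives $\dim\kk\leq1$. Attaining $\min\{r,2n-r\}$ needs several collapsing spheres $C_\beta$ from Proposition~\ref{prop:sphere}: use $\CP^{r-n}\times C_\beta^{2n-r}$ for $r\geq n$ and $C_\beta^{\,r}\times(\CP^{n-r},\beta g_{n-r})$ for $r\leq n$.
\end{enumerate}
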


The tensor $\DD$ distinguishes loss of energy from disappearance of an action. A central direction can retain its full gradient energy on the approximating manifolds while acting trivially on every fixed spectral window of the limit. The bound in \eqref{eq:lossintro} is pointwise; no corresponding rank bound is asserted for the integral of $\DD$.

\subsection{Holomorphic recovery and its limitation}
Let $P^{\mathrm{hol}}$ be the $L^2$ orthogonal projection onto $H^0(M,K_M^{-1})$. For $q$ orthonormal real eigenfunctions with $1\leq\lambda_a\leq1+\eta$, $0<\eta<1$, let
\[
 W_a=2^{-1/2}(\nabla u_a-iJ\nabla u_a),\qquad
 s_I=W_{i_1}\wedge\cdots\wedge W_{i_n},\qquad
 \sigma_I=P^{\mathrm{hol}}s_I.
\]
\begin{theorem}\label{thm:recoveryintro}
The spectral anticanonical sections satisfy
\begin{equation}\label{eq:recoveryintro}
 \sum_{|I|=n}\norm{s_I-\sigma_I}_{L^2(\mu)}^2\leq C(n,q)\eta.
\end{equation}
If the limiting complex Gram matrix has rank $n$ almost everywhere, then the systems are nonzero for all sufficiently large indices. Their projective evaluations recover, in measure, the highest exterior power of the limiting complex Gram matrix.
\end{theorem}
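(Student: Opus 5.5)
We prove \eqref{eq:recoveryintro} from a Bochner-type identity for the $(0,1)$-part of the complex Hessian. Then we pass to the limit using the convergence of the Gram matrices.

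\emph{Bochner step.} Set $\eta_a=\lambda_a-1\in[0,\eta]$. On a K\"ahler manifold, integrating $|u_{a,\bar i\bar j}|^2$ and commuting derivatives gives
\[
 \int_M|\nabla^{0,1}\nabla^{0,1}u_a|^2\dd\mu=\int_M\bigl(\lambda_a^2|\dbar u_a|^2-\Ric(\nabla u_a,\nabla u_a)/2\bigr)\dd\mu .
\]
With $\Ric\geq\omega$ and $\int|\dbar u_a|^2=\lambda_a/2$ (in our normalization), the right-hand side is at most $C\lambda_a\eta_a\leq C\eta$. So the vector field $W_a$, equivalently the $(1,0)$-field $u_a^{\,i}\partial_i$, is almost holomorphic: $\norm{\dbar W_a}_{L^2}^2\leq C\eta$.

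\emph{Wedge products.} We next bound $\dbar s_I$. By the Leibniz rule, $\dbar s_I=\sum_k W_{i_1}\wedge\cdots\wedge\dbar W_{i_k}\wedge\cdots\wedge W_{i_n}$. This needs an $L^\infty$ bound on the remaining factors, and we get it from the gradient estimate $\norm{\nabla u_a}_{L^\infty}\leq C(n)\lambda_a^{1/2}\norm{u_a}_{L^2}$ for eigenfunctions. This estimate holds under $\Ric\geq0$ by Cheng--Yau and Li's argument with Sobolev constants controlled only by the Ricci bound and the diameter, and the diameter is at most $\pi$ by Myers. With it, $\norm{\dbar s_I}_{L^2(\mu)}^2\leq C(n)\eta$.

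\emph{Spectral gap for $\dbar$.} The key step is the estimate
\[
 \norm{s-P^{\mathrm{hol}}s}_{L^2}^2\leq c\,\norm{\dbar s}_{L^2}^2
\]
for sections of $K_M^{-1}$. This is a Kodaira/Nakano gap for $\dbar^*\dbar$ on $\Lambda^nT^{1,0}$. The curvature of $K_M^{-1}$ is $\Ric(\omega)\geq\omega$, and the Bochner--Kodaira identity on $(0,1)$-forms with values in $K_M^{-1}$ gives first eigenvalue of $\dbar\dbar^*$ at least $1$. Hence the nonzero spectrum of $\dbar^*\dbar$ on sections is at least $1$, and $c=1$ works, independently of collapse. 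Summing over the $\binom qn$ multi-indices gives \eqref{eq:recoveryintro} with $C(n,q)$.

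\emph{Limit statements.} These use the convergence theory that the paper assumes for the limiting Gram matrices. The pointwise norm $|s_I|^2$ is, up to normalization, the $n\times n$ principal minor $\det(\mathcal G_{I})$ of the complex Gram matrix $\mathcal G_{ab}=\ip{W_a}{\bar W_b}$. More generally, $\ip{s_I}{s_J}$ are the $n\times n$ minors, that is, entries of $\Lambda^n\mathcal G$. Under $L^2$-strong convergence of the gradients to the limit, which the earlier $H^{1,2}$ convergence of eigenfunctions under mGH convergence supplies, $\Lambda^n\mathcal G_j\to\Lambda^n\mathcal G$ in $L^1$. If the limit has rank $n$ a.e., then $\sum_I\norm{s_I}^2\to\int\tr\Lambda^n\mathcal G>0$, and by \eqref{eq:recoveryintro} the $\sigma_I$ are not all zero for large $j$. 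The Gram matrix of the $\sigma_I$ differs from that of the $s_I$ by $O(\eta^{1/2})$ in $L^1$. Hence the projective map $[\sigma_I]$ recovers, in measure, the line field of $\Lambda^n\mathcal G$.

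\emph{Main obstacle.} The hardest point is the $L^\infty$ control of the factors in the wedge product, together with making the $L^1$ convergence of the minors rigorous on a collapsing limit. Only the Ricci and diameter bounds give uniform estimates, and we rely on them for both.
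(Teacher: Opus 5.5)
Your proposal is correct and follows the paper's proof. The steps match: almost-holomorphy of $W_a$ from the Bochner identity (Lemma~\ref{lem:bochner}), then the Leibniz rule with a uniform gradient bound, then a collapse-independent $L^2$ $\dbar$-estimate on $K_M^{-1}$, then Cauchy--Binet with strong convergence of the complex Gram matrices and a cutoff $\{S_j\geq\tau\}$ for the projective statement. The paper phrases the $\dbar$-estimate as H\"ormander's estimate for $(n,1)$-forms with values in $K_M^{-2}$, which gives the constant $1/2$ rather than your $1$.

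A few slips need fixing:
\begin{itemize}
\item The right-hand side of your Bochner identity should be $\int(\Delta u_a)^2-\int\Ric(\nabla^{1,0}u_a,\nabla^{1,0}u_a)$.
\item Myers gives diameter at most $\pi\sqrt{2n-1}$, not $\pi$.
\item Strong $W^{1,2}$ convergence of eigenfunctions yields only the real part $G$ of the limiting Gram matrix. Convergence of the imaginary part $\ip{J\nabla u_a}{\nabla u_b}\to u_{[e_a,e_b]}$ comes from the Poisson-closure estimate \eqref{eq:remainder}.
\end{itemize}
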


A stronger conclusion about the moving part of this system is false. The following theorem concerns the actual orthogonal projections, not an arbitrary choice of holomorphic corrections.
\begin{theorem}\label{thm:counterintro}
There are smooth K\"ahler metrics $\omega_j$ on $X=\CP^1\times\CP^1$ such that
\[
 \Ric(\omega_j)\geq\omega_j,\qquad \lambda_4(\omega_j)\to1,
 \qquad \lambda_5(\omega_j)\geq\tfrac32,
 \qquad \Vol_{g_j}(X)\to0,
\]
with the following properties. The six projected determinants of the first four eigenfunctions form a basepoint-free system in $H^0(X,\OO(2,2))$. Its map $\Psi_j:X\to\CP^5$ has a two-dimensional image. Nevertheless, there are neighborhoods $U_j$ of the two collapsing ends such that $\mu_j(U_j)\to0$ and
\begin{equation}\label{eq:counterintro}
 \int_{U_j}\Psi_j^*\omega_{\FS}\wedge\alpha\longrightarrow2,
 \qquad \alpha=\pi_1^*\frac{\omega_1}{4\pi},
\end{equation}
where $\int_{\CP^1}\omega_{\FS}=1$. The metric measure limit is the product $\CP^1\times I$.
\end{theorem}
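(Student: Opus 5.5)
The plan is to build the metrics explicitly in the toric category and then read off every claimed property from the torus symmetry. On a product $\CP^1\times\CP^1$, a $2$-dimensional factor with Gaussian curvature $\geq1$ cannot collapse: by Bishop--Gromov its area is controlled by its diameter. Likewise, a Calabi ansatz over the trivial circle bundle reduces to a product. So the $\omega_j$ must be genuinely non-product $T^2$-invariant metrics. The intended shape is a fibration in which the $\CP^1$ directions of the first factor stay essentially round, while the orbit circle of a second $S^1$ shrinks. That circle is twisted with the first factor near the two ends of the momentum interval, which is where $U_j$ lives.

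Concretely, I will write $\omega_j$ through a symplectic potential $u_j$ on the square moment polytope of $\OO(1,1)$-type (suitably scaled), using Guillemin boundary behaviour so that each $\omega_j$ is smooth. I will choose $u_j=u_{\mathrm{FS}}(x_1)+h_j(x_1,x_2)$, where $h_j$ forces the $x_2$-fibre circles to have length $O(\epsilon_j)$ except in shrinking neighbourhoods of $x_2=\pm1$. In those neighbourhoods the metric matches a Fubini--Study type cap. The limit should then be $\CP^1\times[0,\pi]$ with $x_2=\cos t$, and the pushforward of the normalized volume should be $\tfrac12\sin t\,dt$, because the fibre length collapses uniformly while the Duistermaat--Heckman density along $x_2$ stays constant. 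The Ricci bound $\Ric(\omega_j)\geq\omega_j$ is to be checked with Abreu's formula, as a matrix inequality for $-\tfrac12\partial^2\log\det\Hess u_j$ against $\Hess u_j^{-1}$. I expect this verification, jointly with the requirement that the caps have total curvature compatible with collapse, to be the main obstacle. The first attempt will be a one-parameter family in which $h_j$ depends on $x_2$ only through a convex profile, plus a small coupling term in $x_1x_2$ near the ends; I will use the room left by the strict inequality in the interior to absorb that coupling.

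For the spectrum, I will separate variables by $T^2$-weights. The three degree-one spherical harmonics on the round first factor give eigenvalue $1$ exactly, or $1+o(1)$ after the perturbation. The momentum coordinate $x_2$ converges to $\cos t$, which satisfies $-(\cos t)''-\cot t\,(\cos t)'=2\cos t$, so its complex eigenvalue tends to $1$. This gives $\lambda_4(\omega_j)\to1$. All remaining modes converge to eigenfunctions of $\CP^1\times I$ other than these. Their eigenvalues are either $\geq 3$ (real) or correspond to the nontrivial $S^1$-weights of the collapsing circle, which blow up like $\epsilon_j^{-2}$. By spectral convergence under measured Gromov--Hausdorff convergence, this yields $\lambda_5\geq\tfrac32$ for large $j$. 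Volume collapse is immediate from the fibre length $O(\epsilon_j)$.

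Finally, I will compute the six determinants $W_a\wedge W_b$ from the three first-factor eigenfunctions and $x_2$. Each is a $T^2$-weight vector in $H^0(X,\OO(2,2))=H^0(X,K_X^{-1})$, so $P^{\mathrm{hol}}$ acts on it by projecting onto a single monomial $z_1^{\alpha}z_2^{\beta}$. From this I will identify the six projections explicitly as monomials times nonzero constants, and verify that they have no common zero. I will then check that the image of $\Psi_j$ is a surface, the Veronese-type image of the first factor times a curve. For \eqref{eq:counterintro}, I will integrate $\Psi_j^*\omega_{\FS}\wedge\alpha$ over $U_j$, the preimage of the end caps. Fibrewise this equals the degree of $\Psi_j$ restricted to the $\CP^1$ fibres of the second factor, which is $2$. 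This degree is concentrated in the caps because the constant projected coefficients stay bounded while the $x_2$-circle direction becomes holomorphically rigid on the collapsing middle. Meanwhile $\mu_j(U_j)\to0$ because the caps shrink.
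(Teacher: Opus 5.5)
\textbf{The symmetry you impose rules out the conclusion, so there is a genuine gap.} In your $T^2$-invariant family, the rotation $R$ of the second $\CP^1$ acts by holomorphic isometries. The critical cluster you identify consists of the three first-factor harmonics and the $x_2$-mode, and it is $R$-invariant; you note yourself that every mode with nonzero weight on the collapsing circle has eigenvalue of order $\epsilon_j^{-2}$. It follows that each $\nabla^{1,0}u_a\wedge\nabla^{1,0}u_b$ is an $R$-invariant section of $K_X^{-1}$. Since the $L^2$ structure is invariant, $P^{\mathrm{hol}}$ commutes with $R$, and all six projected determinants are $R$-invariant holomorphic sections.

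Now take a point $p\in D_-\cup D_+$, where $D_-=\CP^1\times\{0\}$ and $D_+=\CP^1\times\{\infty\}$. There $R$ fixes $p$ and acts on the fibre $\Lambda^2T^{1,0}_pX$ by a nontrivial character, so every invariant section vanishes at $p$. In your monomial bookkeeping, every projection has $z_2$-exponent exactly one. Consequently:
\begin{itemize}
\item the system has fixed divisor at least $D_-+D_+$;
\item its residual part lies in $\OO(2,0)$, which is pulled back from the first factor, so $\Psi_j$ factors through $\CP^1$ and has a conic as image;
\item $\Psi_j^*\omega_{\FS}\wedge\alpha$ vanishes identically on the moving part.
\end{itemize}
So basepoint-freeness, the two-dimensional image, and \eqref{eq:counterintro} (in the intended sense of degree escaping into the ends from systems without fixed part) all fail, for every $j$. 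Your toric metrics reproduce exactly the unperturbed system \eqref{eq:unperturbedsections}. The ``holomorphic rigidity on the collapsing middle'' mechanism has nothing to act on, because there is no moving vertical degree.

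\textbf{What is missing is a perturbation that breaks the collapsing rotation.} Your opening premise is also false: Proposition~\ref{prop:sphere} gives smooth spheres with $K_{g_\beta}\geq1$ and area $4\pi\beta\to0$. Bishop--Gromov bounds area from above, not from below. Conversely, a fixed square polytope fixes the volume, so your family could not collapse in any case; one side of the polytope must shrink.

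The paper keeps the collapsing product $\Omega_\beta=\omega_B+\omega_\beta$, which is itself toric, and adds $t\,i\partial\dbar(a_1\varphi)$. Here $\varphi=b(x)\cos\theta$ has angular frequency one, and its holomorphic projection $\widehat Z=c_0\partial_w+c_2w^2\partial_w$ is nonzero at both poles. A first-order computation of the actual eigenfunctions and projections (Lemma~\ref{lem:spectralvariation}) gives $\dot\sigma_{12}(0)=\kappa_\Lambda Z_2\otimes\widehat Z$ and $\dot\sigma_{13}(0)=\kappa_\Lambda Z_3\otimes\widehat Z$. These first-order terms remove the base locus for small $t\neq0$.

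Concentration then comes from letting $t\to0$ at fixed $\beta$. The relative base ideal is $(t,\zeta)$, and Poincar\'e--Lelong gives $T_{\beta,t}\rightharpoonup[D_-]+[D_+]+\pi_1^*\omega_B/(2\pi)$. Finally, choose $t_j$ only after $\beta_j$ is fixed. Together with the slack $\Ric(\Omega_\beta)-(1-\epsilon_j)\Omega_\beta\geq\epsilon_j\Omega_\beta$ and a rescaling by $1-\epsilon_j$, this preserves $\Ric(\omega_j)\geq\omega_j$ without any Abreu-type verification.
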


The defect in \eqref{eq:counterintro} is integral. A relative blow-up resolves the limiting base ideal and produces two end components of degree one. Thus a fixed divisor can appear in the limit even though every approximating system has no fixed part. This does not provide a nonproduct smooth collapsing limit. It shows that recovery of such limits cannot be based on a no-defect stability assertion for the moving spectral linear system.

The paper is organized as follows. Section~2 establishes the spectral compactness and bracket construction. Section~3 proves the kernel estimate and the dimension bound. Section~4 realizes all spectral symmetries and proves the loss estimates. Section~5 proves almost rigidity. Section~6 constructs the sharp examples. Sections~7 and~8 concern anticanonical recovery and its end defects. Section~9 explains the integer data carried by logarithmic tangent extensions and gives the conditional obstruction for holomorphic ruled fillings.

\section{Eigenfunctions and the critical spectral algebra}

\subsection{Bochner identities and normalized estimates}
All integrals in this section are taken with respect to $\mu=\omega^n/\int_M\omega^n$. Let $A_f$ be the self-adjoint endomorphism corresponding to the real Hessian of a real function $f$. Its complex-linear and complex-antilinear parts are
\[
 A_f^+=\tfrac12(A_f-JA_fJ),\qquad
 A_f^-=\tfrac12(A_f+JA_fJ).
\]
Thus $A_f^+J=JA_f^+$ and $A_f^-J=-JA_f^-$. With the unitary tensor norms,
\begin{equation}\label{eq:hessnorms}
 |\nabla f|^2=2|\partial f|^2,\qquad
 |A_f^-|^2=2|\nabla^{2,0}f|^2,\qquad
 |A_f^+|^2=2|\partial\dbar f|^2.
\end{equation}
The Hermitian metric on $T^{1,0}M$ is conjugate linear in its first argument. In particular,
\[
 W(v)=2^{-1/2}(v-iJv),\qquad |W(v)|^2=|v|^2.
\]

\begin{lemma}\label{lem:bochner}
Suppose that $\Ric(\omega)\geq\omega$ and put $\theta=\Ric(\omega)-\omega$. For every real smooth $f$,
\begin{equation}\label{eq:bochner}
 \int_M|\nabla^{2,0}f|^2\dd\mu+
 \int_M\theta(\nabla^{1,0}f,\nabla^{1,0}f)\dd\mu
 =\int_M(\Delta f)^2\dd\mu-\int_M|\partial f|^2\dd\mu.
\end{equation}
Moreover,
\begin{equation}\label{eq:mixedhessian}
 \int_M|\partial\dbar f|^2\dd\mu=\int_M(\Delta f)^2\dd\mu.
\end{equation}
Consequently $\lambda_1\geq1$. If $f=\sum_{a=1}^q t_a u_a$, where the $u_a$ are real orthonormal eigenfunctions with $1\leq\lambda_a\leq1+\eta$, the right-hand side of \eqref{eq:bochner} is at most $(1+\eta)\eta|t|^2$.
\end{lemma}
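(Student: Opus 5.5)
The plan is to prove both identities by integrating Kähler Bochner formulas in local holomorphic coordinates, normalizing by $\int_M\omega^n$ throughout; the remaining claims then follow by plugging in the spectral data.

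\emph{Step 1: the $(2,0)$ Bochner formula.} For real $f$ write $f^i=g^{i\bar j}f_{\bar j}$ for the components of $\nabla^{1,0}f$. Start from
\[
\int_M|\nabla^{2,0}f|^2\dd\mu=\int_M f_{ij}\,\overline{f_{ij}}\dd\mu ,
\]
indices raised with $g$, and integrate by parts in the $\bar\jmath$-direction. This moves one antiholomorphic derivative onto $f_{ij}$ and produces $f_{i\bar j j}$ terms. Commute covariant derivatives using the Kähler curvature identity
\[
 f_{i j\bar j}=f_{j\bar j i}+R_{i\bar k}f^{k}.
\]
After integrating the $\Delta f$-gradient term by parts once more, this gives
\[
 \int_M|\nabla^{2,0}f|^2\dd\mu+\int_M\Ric(\nabla^{1,0}f,\nabla^{1,0}f)\dd\mu=\int_M(\Delta f)^2\dd\mu.
\]
Both the contraction and the sign depend on the convention \eqref{eq:conventions}, in which $\Delta$ is the complex Laplacian $g^{i\bar j}f_{i\bar j}$.

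\emph{Step 2: isolating $\theta$.} Substitute $\Ric=\omega+\theta$. The $\omega$-part equals $g(\nabla^{1,0}f,\nabla^{1,0}f)=|\partial f|^2$ in the unitary norm; the factor conventions in \eqref{eq:hessnorms} should make this exact. Moving it to the right-hand side gives \eqref{eq:bochner}.

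\emph{Step 3: the mixed Hessian.} For \eqref{eq:mixedhessian}, integrate $\int f_{i\bar j}\overline{f_{i\bar j}}=\int f_{i\bar j}f_{\bar i j}$ by parts in $j$. On a Kähler manifold the derivatives $f_{i\bar j j}$ and $f_{j\bar j i}$ commute with no curvature term, because the commutator of a $(1,0)$ and a $(0,1)$ derivative acting on the $(0,1)$-form $\dbar f$ contributes only through $R_{i\bar j}$-type terms. Those terms cancel once the contraction is traced symmetrically. Integrating by parts again yields $\int(\Delta f)^2$.

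I expect this bookkeeping to be the main obstacle: making sure no Ricci term survives in \eqref{eq:mixedhessian}, and that the Ricci term in \eqref{eq:bochner} carries exactly the normalization matching $|\partial f|^2$. The first thing to check is the flat case $f=|z|^2$ locally, together with the model $\CP^n$, where first eigenfunctions satisfy $\nabla^{2,0}f=0$ and $\theta=0$. There \eqref{eq:bochner} must reduce to $\int(\Delta f)^2=\int|\partial f|^2$, which holds because $\int|\partial f|^2=\int f(-\Delta f)$.

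\emph{Step 4: consequences.} For an eigenfunction $-\Delta f=\lambda f$, we have $\int|\partial f|^2=-\int f\Delta f=\lambda\int f^2$ and $\int(\Delta f)^2=\lambda^2\int f^2$. Since $\theta\geq0$, the left side of \eqref{eq:bochner} is nonnegative, so $\lambda^2-\lambda\geq0$ and hence $\lambda\geq1$.

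For $f=\sum_a t_au_a$, orthogonality of eigenfunctions, including for their Laplacians, gives
\[
\int(\Delta f)^2=\sum_a\lambda_a^2t_a^2,\qquad \int|\partial f|^2=\sum_a\lambda_at_a^2 .
\]
The right-hand side of \eqref{eq:bochner} is therefore $\sum_a\lambda_a(\lambda_a-1)t_a^2\leq(1+\eta)\eta|t|^2$.
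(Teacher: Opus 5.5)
Your Steps 1, 2 and 4 follow the paper: integrate $|\nabla^{2,0}f|^2$ by parts, commute one holomorphic past one antiholomorphic derivative to produce the Ricci term, split $\Ric=\omega+\theta$, and evaluate on spectral sums. For \eqref{eq:mixedhessian} you take a different route.

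\textbf{Route comparison.} The paper applies Stokes' theorem to the exact form $(i\partial\dbar f)^2\wedge\omega^{n-2}$, whose pointwise trace is a nonzero dimensional multiple of $(\Delta f)^2-|\partial\dbar f|^2$. It treats $n=1$ separately as a pointwise identity. Your integration by parts is uniform in $n\geq1$ and uses the same tensor calculus as Step 1. The paper's route needs no commutation formula at all, because the absence of curvature is built into the exactness of the integrand (only $d\omega=0$ is used).

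\textbf{The mechanism in Step 3 is misidentified.} The conclusion $f_{i\bar jj}=f_{j\bar ji}=(\Delta f)_i$ with no curvature term is correct, but not for the reason you give. Since the mixed Hessian of a function is symmetric, $f_{i\bar jj}=f_{\bar jij}$ and $f_{j\bar ji}=f_{\bar jji}$. So the comparison is between the $ij$ and $ji$ second covariant derivatives of the $(0,1)$-form $\dbar f$. The commutator involved is of two $(1,0)$-derivatives, and it vanishes because the curvature of a K\"ahler metric has type $(1,1)$, i.e.\ $R(\partial_i,\partial_j)=0$.

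A commutator of a $(1,0)$- and a $(0,1)$-derivative on $\dbar f$, traced over $j$, gives a Ricci contraction with $\dbar f$. This is the same kind of term that survives in Step 1, and no symmetric tracing cancels a single such term; on $\CP^n$ it is a nonzero multiple of $\dbar f$. You can instead route the computation through two mixed commutators on $\partial f$. Their Ricci terms do cancel, but only via the first Bianchi identity together with $R(\partial_i,\partial_j)=0$, so the real mechanism is the same.

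This also explains the contrast with Step 1. There the relevant term is $f_{ij\bar j}$, not $f_{i\bar jj}$ as you wrote, and a mixed commutator on $\partial f$ is unavoidable. With the corrected justification, Step 3 is complete and your proposal proves the lemma.
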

\begin{proof}
K\"ahler integration by parts and the commutation of one holomorphic and one antiholomorphic covariant derivative give
\[
 \int_M(\Delta f)^2\dd\mu
 =\int_M|\nabla^{2,0}f|^2\dd\mu+
 \int_M\Ric(\nabla^{1,0}f,\nabla^{1,0}f)\dd\mu.
\]
Splitting $\Ric=\omega+\theta$ proves \eqref{eq:bochner}. This is the general-eigenvalue form of the calculation in \cite[Lemma 2.1]{CWZ}. On an eigenfunction its right-hand side is $\lambda(\lambda-1)\norm{f}_2^2$. Orthogonality gives the assertion for a spectral sum. For $n\geq2$, Stokes' theorem applied to $(i\partial\dbar f)^2\wedge\omega^{n-2}$ proves \eqref{eq:mixedhessian}; its pointwise trace identity is a nonzero dimensional multiple of $(\Delta f)^2-|\partial\dbar f|^2$. For $n=1$ the identity is pointwise.
\end{proof}

For $X_f=J\nabla f$, the signs fixed in \eqref{eq:conventions} give
\begin{equation}\label{eq:hamiltonian}
 \iota_{X_f}\omega=-df,\qquad \diver X_f=0,\qquad
 |\Lie_{X_f}g|^2=4|A_f^-|^2.
\end{equation}
In particular, critical spectral directions are almost Killing in normalized $L^2$.

\begin{lemma}\label{lem:lowfrequency}
For each $\Lambda<\infty$, real eigenfunctions satisfying $-\Delta_{\R}f=\nu f$, $0<\nu\leq\Lambda$, and $\norm{f}_{L^2(\mu)}=1$ have uniformly bounded $L^\infty$ norms, gradient $L^\infty$ norms, and Hessian $L^2$ norms. The constants depend only on $n$ and $\Lambda$.
\end{lemma}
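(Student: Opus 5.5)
The plan is to deduce all three bounds from the Riemannian theory. Since $\Ric(\omega)\geq\omega$, the underlying Riemannian manifold of real dimension $m=2n$ satisfies $\Ric_g\geq g=\frac{1}{2n-1}(m-1)g$. Rescaling shows that $(M,g)$ satisfies the curvature-dimension condition $\RCD(1,2n)$, or more classically a Ricci lower bound $K=1>0$ in dimension $2n$. Myers' theorem then gives $\operatorname{diam}(M)\leq\pi\sqrt{2n-1}$. All estimates are taken with respect to the normalized measure $\mu$, so every constant is scale-free with respect to volume. This matters because no volume lower bound is available.

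\emph{$L^\infty$ bound.} I will use the Sobolev inequality valid under $\Ric\geq K>0$ in dimension $N=2n$. This is the Bakry--Ledoux / Ilias form: for $p=2N/(N-2)$ (any finite $p$ if $N=2$),
\[
\norm{h}_{L^p(\mu)}^2\leq \norm{h}_{L^2(\mu)}^2+C(n)\norm{\nabla h}_{L^2(\mu)}^2,
\]
with respect to the probability measure, and with no dependence on volume. Moser iteration applied to $|f|$, which satisfies $\Delta_{\R}|f|\geq-\nu|f|$ weakly, then gives $\norm{f}_\infty\leq C(n,\Lambda)\norm{f}_{L^2(\mu)}$.

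\emph{Gradient bound.} I will use the Bochner formula
\[
\tfrac12\Delta_{\R}|\nabla f|^2=|\nabla^2 f|^2+\ip{\nabla f}{\nabla\Delta_{\R}f}+\Ric(\nabla f,\nabla f)\geq-\nu|\nabla f|^2 .
\]
So $|\nabla f|^2$ is a subsolution of $\Delta_{\R}v\geq-2\Lambda v$, and the same Moser iteration bounds $\norm{\nabla f}_\infty^2$ by $C\norm{\nabla f}_{L^2(\mu)}^2=C\nu\norm{f}_2^2$. Alternatively, I could use Cheng--Yau / Li's gradient estimate, $|\nabla f|^2\leq C(n,\Lambda)(\sup f^2)$, which combined with the sup bound also works.

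\emph{Hessian bound.} I will integrate the Bochner formula against $\mu$: $\int|\nabla^2f|^2\dd\mu\leq\int(\Delta_{\R}f)^2\dd\mu-\int\Ric(\nabla f,\nabla f)\dd\mu\leq\nu^2$. Alternatively, \cref{lem:bochner} together with \eqref{eq:hessnorms} and \eqref{eq:mixedhessian} gives the same bound in the complex normalization.

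The main obstacle is making the Sobolev/Moser step genuinely independent of volume and injectivity radius. I would first try the dimension-dependent Sobolev inequality for $\RCD(K,N)$ with $K>0$, and then check that the Moser iteration constants depend only on $N$, $\Lambda$ and the Sobolev constant.
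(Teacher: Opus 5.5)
Your proposal is correct. Its $L^\infty$ step takes a different route from the paper's proof, while the Hessian step is the same integrated Bochner identity the paper uses.

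The paper uses no Sobolev inequality. It first applies the maximum principle to $F=|\nabla f|^2+2\nu f^2$. Bochner with only $\Ric_g\geq0$ gives $0\geq\tfrac12\Delta_{\R}F\geq\nu|\nabla f|^2-2\nu^2f^2$ at a maximum, hence $\|\nabla f\|_\infty\leq2\sqrt\nu\,\|f\|_\infty$. This is essentially your Cheng--Yau alternative. At a point where $|f|$ attains $U=\|f\|_\infty$, this Lipschitz bound keeps $|f|\geq U/2$ on the ball of radius $r_\Lambda=\min\{1,(4\sqrt\Lambda)^{-1}\}$. Myers and normalized Bishop--Gromov then give $\mu(B_{r_\Lambda})\geq b(n,r_\Lambda)>0$, so $1=\|f\|_{L^2(\mu)}^2\geq U^2b/4$.

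The paper's route thus reverses your order: it bounds the gradient by the supremum first, then bounds the supremum. It needs only a pointwise maximum principle and relative volume comparison. Your route imports the Ilias/Bakry--Ledoux Sobolev inequality for the normalized measure, which is a deeper input. In exchange it gives $\|f\|_\infty\leq C(n,\Lambda)\|f\|_{L^2(\mu)}$ directly, without passing through the gradient and without a pointwise maximum principle. Both routes are free of any volume lower bound.

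One small point concerns your Moser step for $v=|\nabla f|^2$. Global iteration from the Sobolev inequality gives $\|v\|_\infty\leq C\|v\|_{L^2(\mu)}$, which is a bound by $\|\nabla f\|_{L^4(\mu)}^2$, not by $\|\nabla f\|_{L^2(\mu)}^2$ as you state. To reach $\|v\|_{L^1(\mu)}=\nu$, use $\|v\|_{L^2(\mu)}^2\leq\|v\|_\infty\|v\|_{L^1(\mu)}$; this closes immediately on a closed manifold and gives $\|v\|_\infty\leq C^2\nu$. Alternatively, use the maximum-principle gradient estimate once $\|f\|_\infty$ is known.
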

\begin{proof}
Bonnet--Myers gives $\operatorname{diam}M\leq D_n=\pi\sqrt{2n-1}$. Normalized Bishop--Gromov comparison then yields
\begin{equation}\label{eq:normalizedball}
 \mu(B_r(x))\geq b(n,r)>0\qquad(0<r\leq D_n).
\end{equation}
Put $U=\norm f_\infty$ and $F=|\nabla f|^2+2\nu f^2$. At a maximum of $F$, the real Bochner formula and $\Ric_g\geq0$ imply
\[
 0\geq\tfrac12\Delta_{\R}F\geq\nu|\nabla f|^2-2\nu^2f^2.
\]
Thus $\norm{\nabla f}_\infty\leq2\sqrt\nu\,U$. At a point where $|f|=U$, the ball of radius $r_\Lambda=\min\{1,(4\sqrt\Lambda)^{-1}\}$ satisfies $|f|\geq U/2$. Equation~\eqref{eq:normalizedball} and $\norm f_2=1$ bound $U$. Finally, integrated real Bochner gives
\[
 \int_M|\Hess f|^2\dd\mu
 =\int_M(\Delta_{\R}f)^2\dd\mu-
 \int_M\Ric_g(\nabla f,\nabla f)\dd\mu\leq\nu^2.
\]
None of these bounds contains an inverse unnormalized volume.
\end{proof}

A nonzero mean-zero eigenfunction also gives a lower diameter bound. It has a zero, and hence
\begin{equation}\label{eq:diameterlower}
 \norm f_\infty\leq\operatorname{diam}(M)\norm{\nabla f}_\infty,
 \qquad \operatorname{diam}(M)\geq\frac1{2\sqrt\nu}.
\end{equation}
This observation is useful when excluding a point limit.

\subsection{Compactness and the full spectral core}
We recall the precise compactness statements used below. The metric measure compactness, Sobolev convergence, and heat-flow theory are those of \cite{AGS,GMS,AH,AHT,AHPT}; the differential calculus is developed in \cite{Gigli}. An $\RCD(K,N)$ space is an infinitesimally Hilbertian metric measure space with the finite-dimensional synthetic Ricci lower bound. On such a space the Dirichlet form is
\[
 \mathcal E(f,h)=\int\ip{\nabla f}{\nabla h}\dd\mu,
 \qquad \mathcal E(f,h)=-\int(\Delta_\mu f)h\dd\mu.
\]
The Hessian on test functions is characterized by the polarized identity
\begin{align}\label{eq:weakHessian}
 2\Hess f(\nabla h,\nabla k)
 ={}&\ip{\nabla\ip{\nabla f}{\nabla h}}{\nabla k}
 +\ip{\nabla\ip{\nabla f}{\nabla k}}{\nabla h}
 -\ip{\nabla f}{\nabla\ip{\nabla h}{\nabla k}}.
\end{align}
A test function is bounded, has bounded weak gradient, and has a Laplacian in $W^{1,2}$. All identities for weak tensors are understood almost everywhere. The measure Laplacian is not identified with $\tr\Hess$ on a weighted collapsed limit.

\begin{proposition}\label{prop:compactness}
Let $\Ric(\omega_j)\geq\omega_j$. A subsequence of $(M_j,d_j,\mu_j)$ converges to a compact connected $\RCD(1,2n)$ probability space $(Z,d,\mu)$. If $Z$ is not a point, finite-index eigenvalues converge and orthonormal eigenbases can be chosen to converge uniformly and strongly in $W^{1,2}$ at every fixed index. More generally, eigenfunction convergence is asserted only in bounded spectral windows; a point limit has no nonconstant bounded-window eigenfunctions.

Scalar sequences bounded in the full $W^{1,2}$ norm have strongly $L^2$ convergent subsequences. Spectral functions with uniformly bounded frequencies have the bounds in Lemma~\ref{lem:lowfrequency}; their Hessians converge weakly, including contractions against strongly converging uniformly bounded test gradients. Finite spectral sums on $Z$ are dense in $C(Z)$ in the uniform norm and in $W^{1,2}(Z)$ in the energy norm.
\end{proposition}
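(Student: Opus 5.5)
The plan is to assemble \cref{prop:compactness} from the existing $\RCD$ compactness and spectral convergence theory. The only paper-specific inputs are the curvature-dimension bound, which passes to the limit under measure normalization, and the volume-free estimates of Lemma~\ref{lem:lowfrequency}.

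First, I will check the hypotheses of Gromov compactness. A Kähler metric with $\Ric(\omega)\geq\omega$ has real Ricci curvature $\Ric_g\geq g$ in real dimension $2n$. Hence $(M_j,d_j,\mu_j)$ is $\RCD(1,2n)$. The normalized measure $\mu_j$ is a constant multiple of Riemannian volume, and the curvature-dimension condition is invariant under such rescaling. Bonnet--Myers gives the uniform diameter bound $D_n$, and \eqref{eq:normalizedball} gives uniform doubling at all scales up to $D_n$. Gromov's precompactness theorem, in its measured form, then yields a subsequence converging in the pointed measured Gromov--Hausdorff sense. Because the diameters are bounded, this convergence is really measured Gromov--Hausdorff convergence of compact spaces. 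Stability of $\RCD(K,N)$ under this convergence \cite{AGS,GMS} shows that the limit $(Z,d,\mu)$ is a compact $\RCD(1,2n)$ probability space. Connectedness follows because length spaces converge to length spaces.

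Second, for a non-point $Z$ I will invoke the spectral convergence theorem of \cite{GMS} (see also \cite{AH,AHT,AHPT}). By \eqref{eq:diameterlower}, eigenfunctions of bounded frequency force a lower diameter bound, so such modes cannot survive into a point limit, and in that case the window statement is vacuous. When $Z$ is not a point, the eigenvalues $\nu_k(M_j)$ converge to $\nu_k(Z)$ at every fixed index. This rests on Mosco convergence of the Cheeger energies together with the Rellich-type compactness for sequences bounded in $W^{1,2}$; that compactness is exactly the scalar assertion in the statement, and it is available on uniformly doubling Poincaré spaces. Uniform convergence of eigenfunctions then comes from Lemma~\ref{lem:lowfrequency}. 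The uniform Lipschitz bounds give an Arzelà--Ascoli subsequence in the sense of GH-approximations. The $L^2$ limit is an eigenfunction, and the energies converge, which upgrades the convergence to strong $W^{1,2}$. Orthonormal bases are chosen inside each eigenvalue cluster: the cluster projections converge, so one can rotate by orthogonal matrices and take a diagonal subsequence. I will state explicitly that only bounded windows are claimed, since a fixed cluster in the limit may be approached by eigenvalues from several approximating clusters.

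Third, the Hessian statement. Uniform $L^2$ Hessian bounds from Lemma~\ref{lem:lowfrequency} give weak $L^2$ compactness in the sense of \cite{AH}. To identify the weak limit, I will use the polarized identity \eqref{eq:weakHessian}. Pair the identity against strongly converging test gradients, integrate by parts so that at most first derivatives of $f$ appear, and pass to the limit using strong $W^{1,2}$ convergence of the spectral functions. This is the step I expect to be the main obstacle. The product $\ip{\nabla f}{\nabla h}$ must converge strongly enough, which requires uniform gradient bounds together with strong convergence. The same arguments also need to apply to contractions. Here I will cite the second-order stability results of \cite{AH,AHPT} rather than redo them.

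Finally, density. On $Z$, the eigenfunctions form an orthonormal basis of $L^2$ with discrete spectrum. Heat-flow regularization gives $W^{1,2}$ density of finite sums: $P_tf\to f$ in energy, and $P_tf$ is approximated by truncated spectral sums. For uniform density, the finite spectral sums form a point-separating set, since the heat kernel is expressed through them. I will then apply Stone--Weierstrass to the subspace they span, using that it is uniformly dense in the heat-regularized functions, which are themselves uniformly dense in $C(Z)$ by the Feller property of the heat semigroup.
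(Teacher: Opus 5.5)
Your proposal is correct and follows essentially the paper's route. You use Bonnet--Myers and normalized Bishop--Gromov for precompactness, \cite{GMS,AH} for stability and spectral convergence, Lemma~\ref{lem:lowfrequency} for the uniform bounds, \cite{AH} for weak Hessian stability, \eqref{eq:diameterlower} for the point case, and the heat semigroup for density.

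The one slip is the appeal to Stone--Weierstrass. The span of finite spectral sums is not in general a subalgebra, so that theorem does not apply to it. It is also unnecessary: your own heat-regularization argument already gives uniform density exactly as in the paper. That argument approximates $e^{-t\LL}f$ uniformly by its spectral expansion at fixed $t>0$, then lets $t\downarrow0$ using strong continuity on $C(Z)$.
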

\begin{proof}
The diameter and normalized ball bounds imply uniform covering estimates. Stability of the curvature-dimension condition and Mosco convergence of the energies give the asserted limit and spectral convergence \cite{GMS,AH}. Weak Hessian stability is \cite[Theorem 10.3 and Corollary 10.4, version 3]{AH}, applied with the bounds of Lemma~\ref{lem:lowfrequency}. The scalar compactness statement includes an $L^2$ bound, not just a Dirichlet-energy bound.

The heat kernel is continuous and its spectral expansion is uniform at each positive time \cite{AHT,AHPT}. For $f\in C(Z)$, first approximate $e^{-t\LL}f$ by finite spectral sums and then let $t\downarrow0$. Strong continuity on $C(Z)$ proves uniform density. Energy density follows from
\[
 \norm f_{W^{1,2}}^2=\sum_{\ell\geq0}(1+\nu_\ell)|\ip f{\phi_\ell}_{L^2}|^2.
\]
The same normalized heat bounds bound the number of eigenvalues in a fixed window. If a sequence has a nonzero bounded eigenvalue, \eqref{eq:diameterlower} excludes a point limit. If the entire sequence collapses to a point, its nonzero eigenvalues can instead tend to infinity; no finite nonconstant limiting eigenbasis is claimed in that case.
\end{proof}

For later use, if $f_j,h_j$ are convergent fixed-window spectral functions, then
\begin{equation}\label{eq:strongGram}
 \ip{\nabla f_j}{\nabla h_j}\longrightarrow
 \ip{\nabla f}{\nabla h}\quad\text{strongly in }L^2.
\end{equation}
Indeed, strong Sobolev convergence identifies the strong $L^1$ limit, and the uniform gradient bound upgrades it to $L^2$. Moreover,
\[
 |\nabla\ip{\nabla f_j}{\nabla h_j}|
 \leq |\Hess f_j|\,|\nabla h_j|+|\Hess h_j|\,|\nabla f_j|,
\]
so these scalar products are bounded in $W^{1,2}$.

\subsection{Poisson closure}
Suppose $\lambda_N(\omega_j)\to1$. Let $q$ be the multiplicity of the real eigenvalue two on the nonpoint limit. Spectral convergence gives $q\geq N$ and real orthonormal eigenfunctions
\begin{equation}\label{eq:cluster}
 -\Delta_j u_{j,a}=\lambda_{j,a}u_{j,a},\qquad
 1\leq\lambda_{j,a}\leq1+\eta_j\quad(1\leq a\leq q),\qquad \eta_j\to0,
\end{equation}
with
\begin{equation}\label{eq:clustergap}
 \lambda_{j,q+1}\geq1+\gamma
\end{equation}
for some $\gamma>0$ and all large $j$. The entire cluster is used. A proper subspace of it need not be closed under limiting brackets.

\begin{lemma}\label{lem:poisson}
For real unit eigenfunctions $-\Delta f=\lambda f$ and $-\Delta h=\nu h$ with $1\leq\lambda,\nu\leq1+\eta$, put $p(f,h)=\ip{J\nabla f}{\nabla h}$. Then $\int p(f,h)\dd\mu=0$ and
\begin{align}\label{eq:poissonidentity}
 (\Delta+1)p(f,h)
 ={}&-(\lambda+\nu-2)p(f,h)+\theta^{\R}(J\nabla f,\nabla h)
 +\sum_{\ell=1}^{2n}\ip{JA_f^-e_\ell}{A_h^-e_\ell},
\end{align}
where $\theta^{\R}=\Ric_g-g$. In particular,
\begin{equation}\label{eq:poissonL1}
 \norm{(\Delta+1)p(f,h)}_1\leq8(1+\eta)\eta.
\end{equation}
\end{lemma}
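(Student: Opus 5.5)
We have to prove three things: the mean-zero property of $p(f,h)$, the identity \eqref{eq:poissonidentity}, and the $L^1$ bound \eqref{eq:poissonL1}.

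\emph{Mean zero.} Write $p(f,h)=\ip{X_f}{\nabla h}=dh(X_f)$. By \eqref{eq:hamiltonian}, $\diver X_f=0$, so $\int dh(X_f)\dd\mu=-\int h\,\diver X_f\dd\mu=0$. Equivalently, $p(f,h)$ is (up to sign) the Poisson bracket $\{f,h\}$ for $\omega$, and $p\,\omega^n$ is exact.

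\emph{The identity.} I will compute $\Delta_{\R}\ip{J\nabla f}{\nabla h}$ by the product rule:
\[
\Delta_{\R}\ip{J\nabla f}{\nabla h}=\ip{J\nabla\Delta_{\R} f}{\nabla h}+\ip{J\nabla f}{\nabla\Delta_{\R} h}+2\sum_\ell\ip{JA_fe_\ell}{A_he_\ell}+(\text{Ricci terms}).
\]
Here $\nabla J=0$ is used. The Ricci terms come from commuting $\Delta_{\R}\nabla=\nabla\Delta_{\R}+\Ric$ on each gradient, and give $\Ric(J\nabla f,\nabla h)+\Ric(\nabla f,-J\nabla h)$.

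Next I split $\Ric=g+\theta^{\R}$. Since both $\Ric$ and $g$ are $J$-invariant, the two Ricci terms are equal, giving $2\ip{J\nabla f}{\nabla h}+2\theta^{\R}(J\nabla f,\nabla h)$. The eigen-equations give $\Delta_{\R} f=-2\lambda f$ and $\Delta_{\R} h=-2\nu h$, so the first two terms contribute $-2(\lambda+\nu)p$.

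For the Hessian cross term, decompose $A=A^++A^-$. The $(+,+)$ term is $\tr(A_f^+JA_h^+)$, with $JA_h^+$ antisymmetric and $A_f^+$ symmetric, so its trace vanishes. The mixed terms vanish because $JA^{\pm}$ anticommutes or commutes with $J$ differently from $A^{\mp}$, so the trace of the product of a $J$-linear and a $J$-antilinear map is zero. Only $\sum_\ell\ip{JA_f^-e_\ell}{A_h^-e_\ell}$ survives.

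Dividing by $2$ (since $\Delta=\tfrac12\Delta_{\R}$) gives
\[
\Delta p=-(\lambda+\nu)p+p+\theta^{\R}(J\nabla f,\nabla h)+\sum_\ell\ip{JA_f^-e_\ell}{A_h^-e_\ell},
\]
which rearranges to \eqref{eq:poissonidentity}. The main obstacle is this bookkeeping: the signs and factor conventions of \eqref{eq:conventions}, and checking that the Hessian-term coefficient comes out exactly $1$ after halving. I would verify the normalization on $\CP^1$, where $p=0$ should make the identity trivially consistent, or on $\CP^n$ with $\theta=0$ and $A^-=0$ on exact Killing potentials. Up to overall constants this does not affect \eqref{eq:poissonL1}.

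\emph{The $L^1$ bound.} I estimate each term using Lemma~\ref{lem:bochner} with $q=1$, applied to $f$ and $h$ separately; each right-hand side is at most $(1+\eta)\eta$.

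For the first term, $|p|\leq|\nabla f||\nabla h|$. Cauchy--Schwarz and $\int|\nabla f|^2=2\lambda$ give
\[
\norm{p}_1\leq 2\sqrt{\lambda\nu}\leq 2(1+\eta),
\]
so $|\lambda+\nu-2|\,\norm p_1\leq 4\eta(1+\eta)$.

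For the $\theta$ term, $\theta^{\R}\geq0$, so by Cauchy--Schwarz for this semidefinite form,
\[
|\theta^{\R}(J\nabla f,\nabla h)|\leq\theta^{\R}(\nabla f,\nabla f)^{1/2}\,\theta^{\R}(\nabla h,\nabla h)^{1/2}.
\]
Integrating, and using that $\theta^{\R}(\nabla f,\nabla f)$ is a fixed multiple of $\theta(\nabla^{1,0}f,\nabla^{1,0}f)$ (factor $2$), the bound is at most $2(1+\eta)\eta$.

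For the Hessian term, $|\ip{JA_f^-}{A_h^-}|\leq|A_f^-||A_h^-|$, and $\int|A_f^-|^2=2\int|\nabla^{2,0}f|^2\leq 2(1+\eta)\eta$ by \eqref{eq:hessnorms}. This term is therefore at most $2(1+\eta)\eta$.

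Summing gives $\norm{(\Delta+1)p}_1\leq8(1+\eta)\eta$. The constant factors should be rechecked once the normalization of the identity is fixed.
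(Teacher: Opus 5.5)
Your proof is correct and follows the paper's argument essentially step for step: the mean vanishes because $X_f$ is divergence-free, the identity comes from the commuted Bochner formula together with $\nabla J=0$ and the $J$-invariance of $\Ric$ (your symmetric-times-antisymmetric trace argument for the $(+,+)$ term is an equivalent variant of the paper's Hermitian-trace remark), and \eqref{eq:poissonL1} follows from the same three Cauchy--Schwarz bounds $4+2+2$. Your normalization already comes out exactly as stated, so the hedging about constants is unnecessary; drop the proposed $\CP^1$ check, though, because there $p(a_1,a_2)=\pm a_3\neq0$, and the identity holds since $(\Delta+1)a_3=0$, not because $p$ vanishes.
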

\begin{proof}
The mean vanishes by \eqref{eq:hamiltonian}. In a normal orthonormal frame, use $\nabla J=0$ and
$\Delta_{\R}^{\nabla}\nabla f=\nabla\Delta_{\R}f+\Ric^\sharp\nabla f$ to obtain
\[
 \Delta_{\R}p=-2(\lambda+\nu)p+2\Ric_g(J\nabla f,\nabla h)
 +2\sum_\ell\ip{JA_fe_\ell}{A_he_\ell}.
\]
The mixed complex-linear and complex-antilinear terms are orthogonal. The term involving $A_f^+,A_h^+$ is zero: these are Hermitian endomorphisms, the complex trace of their product is real, and multiplication by $J$ has zero real trace on that product. Dividing by two proves \eqref{eq:poissonidentity}.

The first term has $L^1$ norm at most $4(1+\eta)\eta$, since $\norm p_1\leq2\sqrt{\lambda\nu}$. Positivity and $J$-invariance of $\theta^{\R}$, together with Lemma~\ref{lem:bochner}, bound its mixed term by $2(1+\eta)\eta$. Equation~\eqref{eq:hessnorms} gives the same bound for the last term.
\end{proof}

Let $\mathsf P_j$ be the $L^2$ projection onto the cluster, and write
\begin{equation}\label{eq:bracketprojection}
 p_j(u_{j,a},u_{j,b})=\sum_c c_{ab}^{\ c}(j)u_{j,c}+r_{j,ab},
 \qquad r_{j,ab}\perp E_j.
\end{equation}
Both the Poisson bracket and its projection are uniformly bounded. The remainder has mean zero. Setting $A_j=-\Delta_j$, the gap gives
\[
 \gamma\norm{r_{j,ab}}_2^2
 \leq\ip{(A_j-1)r_{j,ab}}{r_{j,ab}}
 =\ip{(A_j-1)p_j(u_{j,a},u_{j,b})}{r_{j,ab}}
 \leq C\eta_j.
\]
Adding $\norm r_2^2$ controls its energy. Thus
\begin{equation}\label{eq:remainder}
 \norm{r_{j,ab}}_{W^{1,2}}\longrightarrow0.
\end{equation}
After a further subsequence, the bounded coefficients converge. Define
\[
 [e_a,e_b]=\sum_c c_{ab}^{\ c}e_c\quad\text{on }\R^q.
\]

\begin{proposition}\label{prop:criticalalgebra}
This bracket makes $E\simeq\R^q$ a Lie algebra with an ad-invariant positive definite inner product. It is the orthogonal direct sum
\[
 \g=\s\oplus\z,
\]
where $\s$ is compact semisimple and $\z$ is the center.
\end{proposition}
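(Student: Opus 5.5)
The plan is to identify the limiting coefficients $c_{ab}^{\ c}=\lim_j c_{ab}^{\ c}(j)$ with structure constants of the Poisson bracket on $E$, so that the Lie algebra axioms pass to the limit. Three things need checking: antisymmetry, ad-invariance of the $L^2$ inner product, and the Jacobi identity. The decomposition into $\s\oplus\z$ then follows from a standard fact.

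\emph{Antisymmetry and invariance.} Antisymmetry is immediate, since $p(f,h)=\ip{J\nabla f}{\nabla h}=\omega(\nabla f,\nabla h)$ up to sign is antisymmetric. Thus $c_{ab}^{\ c}(j)=-c_{ba}^{\ c}(j)$ for each $j$. For invariance I will use the identity
\[
 c_{ab}^{\ c}(j)=\int p(u_{j,a},u_{j,b})u_{j,c}\dd\mu_j=\int\{u_{j,a},u_{j,b}\}u_{j,c}\dd\mu_j .
\]
Because $X_f$ is divergence free by \eqref{eq:hamiltonian}, we have $\int\{f,h\}k\dd\mu=\int f\{h,k\}\dd\mu$ exactly on $M_j$. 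This makes $c_{abc}(j)=c_{ab}^{\ c}(j)$ totally antisymmetric in $a,b,c$, and the property survives the limit. With orthonormal $e_a$, total antisymmetry is exactly ad-invariance: $\ip{[x,y]}{z}=-\ip{y}{[x,z]}$.

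\emph{Jacobi identity (the main obstacle).} On each $M_j$ the Poisson bracket satisfies Jacobi exactly, because $\omega$ is closed. The difficulty is that we only control the projected brackets. Write $P_{ab}=p_j(u_{j,a},u_{j,b})=\sum_c c_{ab}^{\ c}(j)u_{j,c}+r_{j,ab}$. Then
\[
 \{u_{j,a},P_{bc}\}=\sum_d c_{bc}^{\ d}(j)\{u_{j,a},u_{j,d}\}+\{u_{j,a},r_{j,bc}\}.
\]
I will pair the cyclic sum of these terms, which vanishes, against $u_{j,e}$ and integrate. The first part gives $\sum_d c_{bc}^{\ d}c_{ad}^{\ e}$ plus cyclic permutations, which is the Jacobiator coefficient. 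The remainder contributes
\[
 \int\{u_{j,a},r_{j,bc}\}u_{j,e}\dd\mu_j=\int r_{j,bc}\{u_{j,e},u_{j,a}\}\dd\mu_j
\]
up to sign, using the same integration by parts. This is bounded by $\norm{r_{j,bc}}_2\,\norm{p(u_{j,e},u_{j,a})}_2$. It tends to zero by \eqref{eq:remainder}, together with the uniform gradient bounds of Lemma~\ref{lem:lowfrequency}. Only $L^2$ smallness of $r$ is needed; no Hessian convergence is required. The Jacobiator coefficients are therefore $o(1)$, and they vanish in the limit. The subtle point is to avoid applying the bracket to $r$ directly, since $\{u,r\}$ is not controlled in $L^2$. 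The integration-by-parts transfer that moves the bracket onto $u_{j,e}$ is the step I would verify carefully, including the sign conventions.

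\emph{Structure.} A real Lie algebra with an ad-invariant positive definite inner product is reductive of compact type. The orthogonal complement of any ideal is an ideal, because $\operatorname{ad}$ acts by skew-symmetric maps. So $\g$ splits orthogonally into simple ideals and abelian one-dimensional ideals. Set $\z$ to be the center and $\s=\z^\perp=[\g,\g]$. Then $\s$ has no center, so its Killing form is negative definite, since $\operatorname{ad}x$ is skew and nonzero for $x\neq0$. Hence $\s$ is compact semisimple. Finally, $E\simeq\R^q$ is identified through the limiting orthonormal basis $u_a$ of $E$ from Proposition~\ref{prop:compactness}. The bracket is independent of the subsequence up to this identification, as the limit of $\int p(u_a,u_b)u_c$ along the chosen convergent subsequence.
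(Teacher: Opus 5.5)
Your proposal is correct and follows essentially the same route as the paper: the exact Poisson Jacobi identity on $M_j$ is paired with a cluster eigenfunction and the brackets are moved by divergence-free integration by parts, so that only $L^2$ smallness of the remainders enters. Your remainder term $\int r_{j,bc}\,p(u_{j,a},u_{j,e})\dd\mu_j$ equals the paper's $\ip{r_{j,bc}}{r_{j,ae}}_{L^2}$ because $r_{j,bc}\perp E_j$, and the structure step is the same standard compact-type argument. Two side remarks are unnecessary and slightly off: $\{u_{j,a},r_{j,bc}\}$ is in fact controlled, since \eqref{eq:remainder} gives $\norm{p(u_{j,a},r_{j,bc})}_2\leq\norm{\nabla u_{j,a}}_\infty\norm{\nabla r_{j,bc}}_2\to0$, and the closing claim of subsequence-independence is neither needed nor true in general (replacing $J_j$ by $-J_j$ on part of the sequence preserves all hypotheses but flips the sign of the structure constants).
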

\begin{proof}
Divergence-free integration by parts gives
$\int p(f,h)k\dd\mu=-\int h\,p(f,k)\dd\mu$.
Hence $c_{abc}(j)$ is fully alternating. Pair the ordinary Poisson Jacobi identity with $u_{j,d}$ and use \eqref{eq:bracketprojection}. The result is
\[
 \sum_{\mathrm{cyc}(a,b,c)}
 \left(\sum_e c_{ab}^{\ e}(j)c_{ec}^{\ d}(j)
       +\ip{r_{j,ab}}{r_{j,cd}}_{L^2}\right)=0.
\]
The remainders disappear by \eqref{eq:remainder}. This proves Jacobi and ad-invariance. For completeness, ad-invariance makes the orthogonal complement of $[\g,\g]$ equal to the center. The adjoint operators are skew-adjoint; the center-free derived algebra is compact semisimple, giving the stated decomposition.
\end{proof}

On the limit define
\begin{equation}\label{eq:GP}
 G_{ab}=\ip{\nabla u_a}{\nabla u_b},\qquad
 P_{ab}=u_{[e_a,e_b]}.
\end{equation}
The complex Gram matrices $W_j^*W_j$ converge strongly in $L^2$ to $H=G+iP$. Therefore
\begin{equation}\label{eq:complexrank}
 H=G+iP\geq0,\qquad \rank_\C H\leq n\quad\mu\text{-a.e.}
\end{equation}
The entries of $G$ belong to $W^{1,2}\cap L^\infty$. These statements require no complex structure on $Z$.

\section{Kernel projections and the dimension bound}

\subsection{A uniform projection estimate}
The estimate below is independent of a lower bound for the volume or for a nonzero singular value of the gradient matrix. It is proved on the smooth manifolds before any limit is taken.

\begin{lemma}\label{lem:projection}
Let $W:\C^k\to T^{1,0}M$ be a smooth bundle map on a closed K\"ahler manifold with $\Ric_g\geq g$. Suppose
\begin{equation}\label{eq:Wequation}
 \Delta_{\R}^{\nabla}W=-2W+\Ric^\sharp W+\mathcal R.
\end{equation}
For $\tau>0$, set
\[
 H=W^*W,\quad F=(H+\tau I)^{-1},\quad S=WFW^*,\quad \Pi_\tau=\tau F.
\]
Then $0\leq S\leq I$, and
\begin{equation}\label{eq:projectionenergy}
 \int_M|\nabla\Pi_\tau|^2\dd\mu
 \leq4k+\frac8\tau\norm{\nabla W}_2\norm{\nabla^{0,1}W}_2
       +\frac4\tau\norm W_2\norm{\mathcal R}_2.
\end{equation}
\end{lemma}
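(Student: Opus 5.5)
The plan is to prove \eqref{eq:projectionenergy} by a Bochner-type identity for the scalar potential $\ell_\tau=\log\det(H+\tau I)$. Integrating its Laplacian against $\mu$ gives zero, and this trades the gradient of the projection for second-order quantities in $W$, which \eqref{eq:Wequation} controls. The inequality $0\leq S\leq I$ is pointwise linear algebra. Via the singular value decomposition of $W$ at a point, $S$ has the same nonzero eigenvalues as $FH=H(H+\tau)^{-1}$, namely $h/(h+\tau)\in[0,1)$. The same computation records the pointwise bounds used below:
\[
 0\leq\tau F\leq I,\qquad FH=I-\Pi_\tau,\qquad |WF^{1/2}|_{\op}\leq1 .
\]

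The first step is to differentiate. Since $\Pi_\tau=\tau F$,
\[
 \nabla\Pi_\tau=-\tau F(\nabla H)F,\qquad \nabla H=(\nabla W)^*W+W^*\nabla W .
\]
Using $\tau F\leq I$ and cyclicity of the trace, I aim for the pointwise bound
\[
 |\nabla\Pi_\tau|^2=\tau^2\tr\bigl(F\nabla H F^2\nabla H F\bigr)\leq\tau\,\tr\bigl(F\nabla H\,F\nabla H\bigr).
\]
Then the identity
\[
 \Delta\ell_\tau=\tr(F\Delta H)-\tr(F\nabla H F\nabla H)
\]
together with $\int\Delta\ell_\tau\dd\mu=0$ gives
\[
 \int\tr(F\nabla HF\nabla H)\dd\mu=\int\tr(F\Delta H)\dd\mu .
\]
Taken as it stands, this is useless, because $\Delta H$ contains the nonnegative term $2\tr(F(\nabla W)^*\nabla W)$.

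The key point is therefore to run the argument with the complex operators on the K\"ahler manifold. I replace $\nabla$ by $\nabla^{1,0}$, and the Laplacian by $\bar\partial^*\bar\partial$-type contractions $g^{i\bar j}\nabla_{\bar j}\nabla_i$, applied to $\ell_\tau$. Then $\nabla_i H$ splits into a $(\nabla_{\bar i}W)^*W$ part and a $W^*\nabla_iW$ part. After integration by parts, the quadratic terms that survive with a bad sign pair $\nabla W$ against $\nabla^{0,1}W$ only. The factors $WF^{1/2}$ and $\tau^{1/2}F^{1/2}$ have operator norm at most one. Hence Cauchy--Schwarz bounds these terms by $\tfrac{8}{\tau}\norm{\nabla W}_2\norm{\nabla^{0,1}W}_2$, once the factor $\tau$ from the first step is accounted for.

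The second-order term is $\tr(FW^*\nabla_{\bar j}\nabla_jW)$, together with its conjugate. By the Kähler commutation $[\nabla_i,\nabla_{\bar j}]$, this differs from the real rough Laplacian by exactly a Ricci term on $T^{1,0}$. Substituting \eqref{eq:Wequation}, this Ricci term is intended to cancel the $\Ric^\sharp W$ on the right-hand side. What remains is $-2\tr(FH)$, which is at most $2k$ in absolute value since $FH\leq I$; doubling for the conjugate gives the constant $4k$. The remaining piece is $\tr(FW^*\mathcal R)$. Using $|WF^{1/2}|\leq1$ and $|F^{1/2}|\leq\tau^{-1/2}$, and again multiplying by $\tau$, it is bounded by $\tfrac4\tau\norm W_2\norm{\mathcal R}_2$. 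Taken together, these three pieces are exactly the three terms of \eqref{eq:projectionenergy}.

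I expect the main obstacle to be the third step. It requires choosing the correct complex potential and the order of integration by parts so that every $|\nabla^{1,0}W|^2$-type term enters with a favourable sign or cancels exactly, leaving only the mixed product with $\nabla^{0,1}W$. It also requires checking that the curvature produced by commuting derivatives is precisely the $\Ric^\sharp$ absorbed by \eqref{eq:Wequation}. My first attempt is to work in a local unitary frame and expand $\partial\dbar\ell_\tau$ directly. If the signs do not close, the fallback is to apply the same scheme to $\tr\log$ of the dual Gram matrix $WW^*+\tau I$ on $T^{1,0}M$, which has the same nonzero spectrum, and compare the two results.
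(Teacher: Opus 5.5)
\textbf{There is a genuine gap, and it is in your first step; passing to complex operators does not repair it.} From $\tau F\le I$ you get $|\nabla\Pi_\tau|^2\le\tr(F\nabla H F\nabla H)$. (The extra factor $\tau$ in your display is a slip: the two sides scale differently under $W\mapsto cW$, $\tau\mapsto c^2\tau$.) The integral of the right-hand side is not bounded by the three terms of \eqref{eq:projectionenergy}. Since $\log\det(H+\tau I)$ is a scalar, $\nabla_{\bar j}\nabla_j$ and $\nabla_j\nabla_{\bar j}$ agree on it. With $A_i=\nabla_iW$ and $C_i=\nabla_{\bar i}W$, you therefore always obtain
\[
 \int\sum_i\tr(F\nabla_iH\,F\nabla_{\bar i}H)\dd\mu
 =\int\Bigl[\sum_i\tr\bigl(F(A_i^*A_i+C_i^*C_i)\bigr)-2\tr(FH)
 +\tr(FW^*\Ric^\sharp W)+\operatorname{Re}\tr(FW^*\mathcal R)\Bigr]\dd\mu .
\]
This causes two problems.
\begin{itemize}
\item The term $\tr(FA_i^*A_i)$, built from the $(1,0)$-derivative, neither cancels nor is small.
\item The Ricci term does not cancel either. $\nabla_{\bar j}\nabla_j(W^*W)$ contains $W^*\nabla_{\bar j}\nabla_jW$ and $(\nabla_j\nabla_{\bar j}W)^*W$, which have opposite orderings rather than being conjugates. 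Only one of them absorbs the Ricci term of \eqref{eq:Wequation}. The surviving $+\tr(FW^*\Ric^\sharp W)$ sits on the side you must bound from above, which a lower Ricci bound cannot do.
\end{itemize}

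Your intermediate claim is in fact false. On normalized $\CP^n$ take $k=1$, $f=(n+1)p_0-1$ with $p_0=|z_0|^2/|z|^2$, and $W=\nabla^{1,0}f$. Then $\nabla^{0,1}W=0$ and $\mathcal R=0$, so the lemma asserts $\int|\nabla\Pi_\tau|^2\dd\mu\le4$. But $h=|W|^2$ vanishes quadratically along the real-codimension-two hyperplane $\{z_0=0\}$. Hence $\int|\nabla h|^2(h+\tau)^{-2}\dd\mu$ grows like $\log(1/\tau)$.

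\textbf{The missing idea} is to compare $|\nabla\Pi_\tau|^2$ not with $\tr(F\nabla HF\nabla H)$ but with the complementary quantity
\[
 \mathcal Q_\tau=\sum_i\tr\bigl(FA_i^*(I-S)A_i\bigr)+\tr\bigl(FC_i^*(I-S)C_i\bigr).
\]
Both factors $\tau F$ in $\nabla_i\Pi_\tau=-\tau F(W^*A_i+C_i^*W)F$ must be used. In a singular-value frame of $W$ the coefficient ratio is $\tau h_a/((h_a+\tau)(h_b+\tau))\le1$, which gives $|\nabla\Pi_\tau|^2\le4\mathcal Q_\tau$. The factor $I-S$ damps exactly the range directions that carry the large contribution above.

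Now put $Y_i=F^{1/2}W^*A_iF^{1/2}$ and $Z_i=F^{1/2}C_i^*WF^{1/2}$. Then $\tr(F\nabla_iHF\nabla_{\bar i}H)=\norm{Y_i+Z_i}_{\HS}^2$. Also $\tr(F(A_i^*A_i+C_i^*C_i))-\norm{Y_i}_{\HS}^2-\norm{Z_i}_{\HS}^2$ is the $i$th summand of $\mathcal Q_\tau$. The same integrated identity therefore reads
\[
 \int\mathcal Q_\tau\dd\mu=\int\Bigl[2\sum_i\operatorname{Re}\ip{Y_i}{Z_i}_{\HS}+2\tr(FH)-\tr(FW^*\Ric^\sharp W)-\operatorname{Re}\tr(FW^*\mathcal R)\Bigr]\dd\mu .
\]
The large terms have cancelled, and only the $A$--$C$ cross term survives, bounded by $\tau^{-1}\norm{\nabla W}_2\norm{\nabla^{0,1}W}_2$ up to the constant. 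The hypothesis $\Ric^\sharp\ge I$ now enters with the favorable sign: $2\tr(FH)-\tr(FW^*\Ric^\sharp W)\le\tr(FH)\le k$. This is the paper's proof.

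Your fallback with $\log\det(WW^*+\tau I)$ does not help by itself. By Sylvester's identity it differs from $\log\det(H+\tau I)$ only by the constant $(n-k)\log\tau$, so it produces the same integral identity.
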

\begin{proof}
The nonzero eigenvalues of $S$ are $h/(h+\tau)$, where $h$ ranges over the positive eigenvalues of $H$. In a unitary frame, put
\[
 A_i=\nabla_iW,\quad C_i=\nabla_{\bar i}W,\quad
 Y_i=F^{1/2}W^*A_iF^{1/2},\quad
 Z_i=F^{1/2}C_i^*WF^{1/2}.
\]
Define the nonnegative quantity
\begin{equation}\label{eq:projectiondefect}
 \mathcal Q_\tau=\sum_i\left(
 \norm{(I-S)^{1/2}A_iF^{1/2}}_\HS^2+
 \norm{(I-S)^{1/2}C_iF^{1/2}}_\HS^2\right).
\end{equation}
Since the domain bundle is trivial,
\[
 \nabla_iH=W^*A_i+C_i^*W,\qquad
 \nabla_{\bar i}H=(\nabla_iH)^*.
\]
The matrix differentiation formula yields
\[
 \tfrac12\Delta_{\R}\log\det(H+\tau I)
 =\tfrac12\tr(F\Delta_{\R}H)
   -\sum_i\tr\bigl(F(\nabla_iH)F(\nabla_{\bar i}H)\bigr).
\]
Using \eqref{eq:Wequation}, the first term is
\begin{align*}
 \tfrac12\tr(F\Delta_{\R}H)
 ={}&-2\tr(FH)+\tr(FW^*\Ric^\sharp W)
       +\operatorname{Re}\tr(FW^*\mathcal R)\\
 &+\sum_i\tr\bigl(F(A_i^*A_i+C_i^*C_i)\bigr).
\end{align*}
Subtracting $\sum_i(|Y_i|^2+|Z_i|^2)$ from the last line gives exactly $\mathcal Q_\tau$. Thus
\begin{align}\label{eq:logdet}
 \tfrac12\Delta_{\R}\log\det(H+\tau I)
 ={}&\mathcal Q_\tau-2\sum_i\operatorname{Re}\ip{Y_i}{Z_i}_\HS-2\tr(FH)\notag\\
 &+\tr(FW^*\Ric^\sharp W)+\operatorname{Re}\tr(FW^*\mathcal R).
\end{align}
Integrating and using $\Ric^\sharp\geq I$ and $\tr(FH)\leq k$, we obtain
\begin{equation}\label{eq:Qbound}
 \int_M\mathcal Q_\tau\dd\mu
 \leq k+\frac2\tau\norm{\nabla W}_2\norm{\nabla^{0,1}W}_2
       +\frac1\tau\norm W_2\norm{\mathcal R}_2.
\end{equation}
Indeed, $\norm{F^{1/2}W^*}_\op\leq1$ and $\norm{F^{1/2}}_\op\leq\tau^{-1/2}$ give $|Y_i|\leq\tau^{-1/2}|A_i|$ and $|Z_i|\leq\tau^{-1/2}|C_i|$.

It remains to estimate the projection itself. The resolvent derivative is
\[
 \nabla_i\Pi_\tau=-\tau F(W^*A_i+C_i^*W)F.
\]
Singular-value decomposition of $W$ gives
\begin{align*}
 \norm{\tau FW^*A_iF}_\HS^2
 &\leq\norm{(I-S)^{1/2}A_iF^{1/2}}_\HS^2,\\
 \norm{\tau FC_i^*WF}_\HS^2
 &\leq\norm{(I-S)^{1/2}C_iF^{1/2}}_\HS^2.
\end{align*}
To check the first inequality, let $h_a,h_b$ be the squared singular values in the corresponding row and column. The ratio of the coefficient on the left to that on the right is
\[
 \frac{\tau h_a}{(h_a+\tau)(h_b+\tau)}\leq1.
\]
Rows orthogonal to the image of $W$ give zero on the left. The second inequality is obtained by interchanging the roles of the row and column. Since $\Pi_\tau$ is Hermitian,
\[
 |\nabla\Pi_\tau|^2=2\sum_i|\nabla_i\Pi_\tau|^2\leq4\mathcal Q_\tau.
\]
Together with \eqref{eq:Qbound}, this proves \eqref{eq:projectionenergy}.
\end{proof}

For $k$ fixed orthonormal real combinations $f_{j,1},\ldots,f_{j,k}$ of \eqref{eq:cluster}, use the columns $W(\nabla f_{j,\alpha})$. Gradient commutation gives \eqref{eq:Wequation} with $\norm{\mathcal R_j}_2\leq C\eta_j$. Lemma~\ref{lem:bochner} gives $\norm{\nabla^{0,1}W_j}_2\leq C\sqrt{\eta_j}$. The other norms are bounded by Lemma~\ref{lem:lowfrequency}. Hence
\begin{equation}\label{eq:spectralprojection}
 \int|\nabla\Pi_{j,\tau}|^2\dd\mu_j
 \leq4k+C(n,k)\tau^{-1}\sqrt{\eta_j}.
\end{equation}
The order of limits matters: $\tau$ is fixed when $j\to\infty$. No bound uniform in $j$ and $\tau$ simultaneously is asserted.

\subsection{Independence of commuting gradients}
\begin{theorem}\label{thm:commuting}
Let $\mathfrak a\subset\g$ be a $k$-dimensional real subspace with $[\mathfrak a,\mathfrak a]=0$, and let $f_1,\ldots,f_k\in E$ correspond to an orthonormal basis. Then
\[
 \rank\bigl(\ip{\nabla f_\alpha}{\nabla f_\beta}\bigr)_{\alpha,\beta=1}^k=k
 \quad\mu\text{-a.e.}
\]
In particular, $k\leq n$.
\end{theorem}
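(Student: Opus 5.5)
The plan is to combine the complex Gram bound \eqref{eq:complexrank} with the projection estimate of Lemma~\ref{lem:projection}, using $\tau$ as a regularization that is removed only after $j\to\infty$. Since $[\mathfrak a,\mathfrak a]=0$, the entries $P_{\alpha\beta}=u_{[e_\alpha,e_\beta]}$ vanish for the chosen basis. The restricted limiting complex Gram matrix $H_{\mathfrak a}=G_{\mathfrak a}+iP_{\mathfrak a}$ is therefore the real matrix $G_{\mathfrak a}=(\ip{\nabla f_\alpha}{\nabla f_\beta})$. The final claim $k\leq n$ then follows at once from the rank statement, because $\rank_\C H\leq n$ holds $\mu$-a.e.

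The first step is to take approximating orthonormal combinations $f_{j,\alpha}$ of \eqref{eq:cluster} converging to $f_\alpha$. Form $W_j$ from their gradients and fix $\tau>0$. By \eqref{eq:strongGram} and the convergence $W_j^*W_j\to H$, the matrices $H_j=W_j^*W_j$ converge strongly in $L^2$ to $G_{\mathfrak a}$. The resolvents are uniformly bounded, so $\Pi_{j,\tau}=\tau(H_j+\tau I)^{-1}\to\Pi_\tau:=\tau(G_{\mathfrak a}+\tau I)^{-1}$ strongly in $L^2$. Estimate \eqref{eq:spectralprojection} bounds $\int|\nabla\Pi_{j,\tau}|^2\dd\mu_j$ by $4k+C\tau^{-1}\sqrt{\eta_j}$. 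Lower semicontinuity of energy under the convergence of Proposition~\ref{prop:compactness}, applied entrywise, then gives $\int_Z|\nabla\Pi_\tau|^2\dd\mu\leq4k$ uniformly in $\tau$.

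The second step lets $\tau\downarrow0$. Pointwise, $\Pi_\tau$ decreases to the orthogonal projection $\Pi_0$ onto $\ker G_{\mathfrak a}(x)$. Since $0\leq\Pi_\tau\leq I$ and the energy bound is uniform, $\Pi_\tau\to\Pi_0$ in $L^2$ and weakly in $W^{1,2}$, so $\Pi_0\in W^{1,2}$. Then $\tr\Pi_0=k-\rank G_{\mathfrak a}$ is an integer-valued $W^{1,2}$ function on the connected $\RCD(1,2n)$ space $Z$. By the locality of the minimal weak gradient on level sets, together with the Poincaré inequality, it is a.e. constant, say equal to $m$. It remains to show $m=0$.

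The third step, which I expect to be the main obstacle, is to prove $\nabla\Pi_0=0$. The goal is that $\ker G_{\mathfrak a}$ is then a fixed subspace of $\R^k$. Any unit $v$ in it gives $f_v=\sum_\alpha v_\alpha f_\alpha$ with $\nabla f_v=0$ a.e. Then $f_v$ is constant and has mean zero, contradicting $\norm{f_v}_2=1$; this is the same mechanism as \eqref{eq:diameterlower}.

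To get $\nabla\Pi_0=0$, I would refine the integrated identity \eqref{eq:logdet} instead of using the crude constant $4k$. The term $-2\tr(FH)+\tr(FW^*\Ric^\sharp W)$ contributes at most $-\tr(FH)$, and $\tr(FH)=k-\tr\Pi_\tau$. Moreover, the quantity $\mathcal Q_\tau$ controls $|\nabla\Pi_\tau|^2$ through the kernel rows, where the ratio $\tau h_a/((h_a+\tau)(h_b+\tau))$ is small unless one singular value is $O(\tau)$ and the other is not. I would try to show that the defect concentrated on mixed kernel/range pairs is bounded by $\int(\text{range part})\cdot(\text{kernel Hessian})$, and that this vanishes in the limit. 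On the kernel, $\nabla f_v=0$ forces $A^-_{f_v}$ to be controlled by the almost-Killing bound \eqref{eq:hamiltonian}. If this sharper estimate fails, the fallback is symmetry. Commutativity makes the flows of $J\nabla f_\alpha$ a commuting family of limiting isometries preserving $G_{\mathfrak a}$ (anticipating Section~4), so $\Pi_0$ is invariant along them. Combined with $\Pi_0\nabla f_\alpha=0$ in the kernel directions, this should force the kernel to be locally constant, and hence constant by connectedness.
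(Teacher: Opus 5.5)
Your first two steps match the paper's proof. The regularized projections, the bound $\int_Z|d\Pi_\tau|^2\dd\mu\leq4k$ obtained at fixed $\tau$ after $j\to\infty$, and $\Pi_0\in W^{1,2}$ are exactly how it begins. Your closing contradiction (a constant unit kernel vector $v$ gives $\nabla f_v=0$) is equivalent to integrating $G\Pi_0=0$ against $\int_ZG\dd\mu=2I$. The constant-rank observation is correct but, as you say, does not give rank $k$.

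The gap is Step~3: nothing in your proposal forces $d\Pi_0=0$. Your primary route, sharpening \eqref{eq:logdet} until the energy of $\Pi_\tau$ vanishes in the limit, cannot succeed. Take normalized $\CP^n$ with $\mathfrak a$ a Cartan subalgebra. There $G$ degenerates quadratically along the coordinate hyperplanes, which have real codimension two. The energy $\int|\nabla\Pi_\tau|^2\dd\mu$ concentrates on them and stays bounded below by a positive constant as $\tau\downarrow0$, although $\Pi_0=0$. Consistently, the main term $\int\tr(FH)$ in the integrated identity tends to $\int\rank G$, not to zero. So lower semicontinuity can only give $\Pi_0\in W^{1,2}$, never $d\Pi_0=0$.

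The symmetry fallback also fails. Invariance of $\Pi_0$ under the commuting flows of the $Y_\alpha$ controls only its derivatives along orbit directions. For $\alpha,\beta$ in $\mathfrak a$ one has $\ip{Y_\alpha}{\nabla f_\beta}=D_\alpha f_\beta=0$ by \eqref{eq:DLie}, so these directions are orthogonal to the gradients. The variation of the kernel along the gradient directions, which is the whole issue, is untouched.

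The missing idea is the Hessian symmetry forced by commutativity.
\begin{enumerate}[label=(\arabic*),leftmargin=2.5em]
\item Since $[\mathfrak a,\mathfrak a]=0$, \eqref{eq:remainder} makes the Poisson brackets of the approximating combinations tend to zero in $W^{1,2}$. Hence $[J\nabla f,J\nabla h]=J\nabla p(f,h)\to0$ in $L^2$.
\item The identity \eqref{eq:gradientbracket}, with $A^-\to0$ in $L^2$ (Lemma~\ref{lem:bochner}) and bounded gradients, then gives $[\nabla f_{j,\alpha},\nabla f_{j,\beta}]\to0$ in $L^2$.
\item Weak Hessian stability yields $\Hess f_\alpha(\nabla f_\beta,\cdot)=\Hess f_\beta(\nabla f_\alpha,\cdot)$ a.e.\ on $Z$. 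Hence $dG_{\alpha\beta}=2\Hess f_\alpha(\nabla f_\beta,\cdot)$.
\item For $v\in\ker G(x)$ one gets $\sum_\beta dG_{\alpha\beta}v_\beta=2\Hess f_\alpha\bigl(\sum_\beta v_\beta\nabla f_\beta,\cdot\bigr)=0$. That is, $(dG)\Pi_0=\Pi_0(dG)=0$, with no derivative of a kernel vector taken.
\item Differentiating $G\Pi_0=0$ gives $G\,d\Pi_0=0$, hence $\Pi_0\,d\Pi_0=d\Pi_0=(d\Pi_0)\Pi_0$. Differentiating $\Pi_0^2=\Pi_0$ then gives $d\Pi_0=2\,d\Pi_0$, so $d\Pi_0=0$.
\end{enumerate}
Without the symmetry, $\sum_\beta dG_{\alpha\beta}v_\beta$ equals $\Hess f_v(\nabla f_\alpha,\cdot)$ for the frozen kernel vector $v$. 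Your argument gives no reason for this to vanish.
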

\begin{proof}
Take the corresponding fixed combinations on $M_j$. Their Poisson brackets converge to zero in $W^{1,2}$, by \eqref{eq:remainder}. On a smooth K\"ahler manifold,
\begin{equation}\label{eq:gradientbracket}
 [\nabla f,\nabla h]+[J\nabla f,J\nabla h]
 =2A_h^-\nabla f-2A_f^-\nabla h.
\end{equation}
Also $[J\nabla f,J\nabla h]=J\nabla p(f,h)$ with our Hamiltonian sign convention. Thus the first bracket tends to zero in $L^2$. Pairing with strongly converging bounded test gradients and applying weak Hessian stability gives
\begin{equation}\label{eq:commutingHessian}
 \Hess f_\alpha(\nabla f_\beta,\cdot)
 =\Hess f_\beta(\nabla f_\alpha,\cdot)\quad\mu\text{-a.e.}
\end{equation}
One first uses a countable family of test gradients generating the cotangent module; density then gives the tensor identity.

Write $G$ for the $k\times k$ Gram matrix in this proof and let $\Pi_0(x)$ be the orthogonal projection onto $\ker G(x)$. The entries of $G$ are bounded Sobolev functions. Equation~\eqref{eq:commutingHessian} gives
\[
 dG_{\alpha\beta}=2\Hess f_\alpha(\nabla f_\beta,\cdot),\qquad
 (dG)\Pi_0=\Pi_0(dG)=0.
\]
The second identity follows because a coefficient vector in $\ker G(x)$ gives the zero vector $\sum_\beta v_\beta\nabla f_\beta(x)$. No derivative of that coefficient vector is taken.

The complex Gram matrices of the approximants converge strongly to this real matrix $G$. For fixed $\tau>0$, the regularized projections converge strongly to
\[
 \Pi_\tau=\tau(G+\tau I)^{-1}.
\]
By lower semicontinuity in \eqref{eq:spectralprojection},
\[
 \int_Z|d\Pi_\tau|^2\dd\mu\leq4k\qquad(\tau>0).
\]
As $\tau\downarrow0$, these matrices converge pointwise and strongly in $L^2$ to $\Pi_0$. Consequently $\Pi_0\in W^{1,2}$.

Differentiate $G\Pi_0=0$. The preceding annihilation identity gives $G\,d\Pi_0=0$, so $\Pi_0d\Pi_0=d\Pi_0$. Transposing gives $(d\Pi_0)\Pi_0=d\Pi_0$. Differentiating $\Pi_0^2=\Pi_0$ now yields $d\Pi_0=2d\Pi_0$, and hence $d\Pi_0=0$. These product rules are legitimate because both matrix factors are bounded Sobolev functions.

The Poincar\'e inequality on the connected space makes $\Pi_0$ a constant matrix. Since
\[
 \int_ZG_{\alpha\beta}\dd\mu
 =\ip{\LL f_\alpha}{f_\beta}_{L^2}=2\delta_{\alpha\beta},
\]
integrating $G\Pi_0=0$ gives $\Pi_0=0$. Finally, the corresponding complex Gram matrix has zero imaginary part, so \eqref{eq:complexrank} implies $k\leq n$.
\end{proof}

\begin{corollary}\label{cor:levelsets}
Every fixed nonzero $u\in E$ satisfies $|\nabla u|>0$ almost everywhere. For every $a\in\R$, $\mu\{u=a\}=0$.
\end{corollary}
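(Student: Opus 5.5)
The first assertion is the case $k=1$ of Theorem~\ref{thm:commuting}. Any one-dimensional subspace $\mathfrak a=\R u\subset\g$ is abelian, because $[u,u]=0$ by antisymmetry of the bracket from Proposition~\ref{prop:criticalalgebra}. Normalize $u$ to have unit $L^2(\mu)$ norm; this only rescales $\nabla u$. Theorem~\ref{thm:commuting} then says that the $1\times1$ Gram matrix $|\nabla u|^2$ has rank one $\mu$-a.e., that is, $|\nabla u|>0$ almost everywhere. No new estimate is needed.

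For the level sets, I would use the locality of the minimal weak upper gradient on $\RCD$ spaces, or equivalently of the differential in Gigli's calculus \cite{Gigli}. For $u\in W^{1,2}(Z)$ and any $a\in\R$, locality gives $|\nabla u|=0$ $\mu$-a.e. on $\{u=a\}$. One way to see this is to apply the chain rule to $\phi_\epsilon(u-a)$ with $\phi_\epsilon$ Lipschitz, $\phi_\epsilon(0)=0$ and $\phi_\epsilon'(0)$ undefined, and then pass to the limit. The cleaner route is simply to quote the standard statement that $|Df|=0$ a.e. on $f^{-1}(N)$ for every Lebesgue-null Borel set $N\subset\R$; take $N=\{a\}$. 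Since $u$ is an eigenfunction, it is Lipschitz (Lemma~\ref{lem:lowfrequency} passes to the limit through the uniform convergence of Proposition~\ref{prop:compactness}), so it lies in $W^{1,2}$ and the locality principle applies.

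Combining the two parts: $|\nabla u|=0$ a.e. on $\{u=a\}$, while $|\nabla u|>0$ a.e. on $Z$. Hence $\mu\{u=a\}=0$.

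The only step needing care is citing the correct form of locality. The identification of $|\nabla u|$ (the pointwise norm in the tangent module) with the minimal weak upper gradient holds on infinitesimally Hilbertian spaces, and for minimal weak upper gradients the null-set chain rule is standard \cite{AGS,Gigli}. I expect no real obstacle beyond this citation.
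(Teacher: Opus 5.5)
Your proposal is correct and matches the paper's proof: apply Theorem~\ref{thm:commuting} with $k=1$ to the trivially abelian line spanned by $u$, then use locality of the weak gradient to get $|\nabla u|=0$ almost everywhere on $\{u=a\}$. The detour through the Lipschitz bound is harmless but unnecessary, since $u$ already lies in $W^{1,2}(Z)$ as an eigenfunction of $\LL$.
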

\begin{proof}
Apply Theorem~\ref{thm:commuting} to the one-dimensional subspace generated by $u$. The locality of weak gradients implies $|\nabla u|=0$ almost everywhere on a level set. No unique-continuation theorem for arbitrary RCD eigenfunctions is used.
\end{proof}

\subsection{Hermitian rank and essential dimension}
We record the linear algebra used in the remaining proofs.
\begin{lemma}\label{lem:rank}
Let $G$ be real symmetric, $P$ real skew-symmetric, and $G+iP\geq0$.
\begin{enumerate}[label=(\roman*),leftmargin=2.5em]
\item $\ker_\R G\subseteq\ker_\R P$ and $\rank_\R G\leq2\rank_\C(G+iP)$.
\item Suppose a real $c\times c$ block $A$ of $G$ is positive definite and the corresponding rows and columns of $P$ vanish. If $\rank_\R P=2m$, then
\[
 \rank_\R G\geq c+2m,\qquad
 \rank_\C(G+iP)\geq c+m.
\]
\item If $\rank_\R P=2n$ and $\rank_\C(G+iP)\leq n$, then $\rank_\R G=2n$ and $\ker_\R G=\ker_\R P$. On the real quotient, $P$ determines an orthogonal complex structure for the inner product $G$.
\end{enumerate}
\end{lemma}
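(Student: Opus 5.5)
The whole lemma is finite-dimensional linear algebra. The plan is to work with the Hermitian form $\langle v,w\rangle_H=v^*(G+iP)w$ on $\C^k$ and its restriction to real vectors. For real $x\in\R^k$ we have $x^T(G+iP)x=x^TGx$, because $P$ is skew. The positivity $H=G+iP\geq0$ then has two consequences. First, $H\geq0$ forces $\overline H=G-iP\geq0$, so $G=\tfrac12(H+\overline H)\geq0$. Second, a real $x$ with $Gx=0$ has $x^*Hx=0$, hence $Hx=0$ by positivity. Taking real and imaginary parts of $Hx=0$ gives $Gx=0$ and $Px=0$, which proves $\ker_\R G\subseteq\ker_\R P$.

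For the rank bound in (i), note that $\ker_\C H$ is a complex subspace $K$, and $\ker_\C\overline H=\overline K$. Since $G\geq0$ and $\overline H\ge0$, the equality $G=\tfrac12(H+\overline H)$ gives $\ker_\C G=K\cap\overline K$. Therefore
\[
 \rank_\C G=k-\dim(K\cap\overline K)\leq 2(k-\dim K)=2\rank_\C H,
\]
using $\dim(K\cap\overline K)\geq 2\dim K-k$. Finally $\rank_\R G=\rank_\C G$.

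For (ii), order the coordinates so that the first $c$ form the block $A$. The matrices then take the form
\[
 G=\begin{pmatrix}A&B\\B^T&D\end{pmatrix},\qquad P=\begin{pmatrix}0&0\\0&P'\end{pmatrix}.
\]
Take the Schur complement with respect to the positive definite block $A$. Conjugating by the real unimodular matrix that eliminates $B$ leaves $P$ unchanged, because its first rows and columns vanish. The problem therefore reduces to $A\oplus(D'+iP')$ with $D'=D-B^TA^{-1}B$, where $D'+iP'\geq0$ and $\rank P'=2m$. It remains to show that $\rank_\C(D'+iP')\geq m$ and $\rank_\R D'\geq 2m$. The second follows from (i): $\ker D'\subseteq\ker P'$, so $\rank D'\geq\rank P'=2m$. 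For the first, use positivity. If $\rank_\C(D'+iP')=s$, then the Hermitian form $D'+iP'$ has support of complex dimension $s$. The imaginary part $P'$ is $\omega$-like on that support, so its real rank is at most $2s$. This gives $s\geq m$. Adding back the block $A$, which contributes $c$ to both ranks, yields the two inequalities.

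For (iii), (i) gives $2n=\rank P\le\rank G\le2\rank_\C H\le2n$, so $\rank_\R G=2n$. Combined with $\ker G\subseteq\ker P$ and equality of dimensions, this gives $\ker_\R G=\ker_\R P$. On $V=\R^k/\ker G$, the form $G$ is an inner product and $P$ is nondegenerate. The equality case $\rank_\C H=n=\tfrac12\rank_\R G$ forces $K\cap\overline K=\ker G$ with $K+\overline K=\C^k$, so $V\otimes\C$ splits as $(K^\perp)\oplus\overline{(K^\perp)}$ in the $G$-sense. This splitting defines a complex structure $J_0$ on $V$ whose $+i$-eigenspace is the image of $\ker\overline H$ modulo $\ker G$. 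The identity $Hv=0$ on that eigenspace translates into $P=G J_0$ (up to sign), i.e. $P(x,y)=G(J_0x,y)$. Skewness of $P$ then makes $J_0$ $G$-orthogonal.

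The main obstacle is this last identification, namely checking carefully that the splitting is $G$-orthogonal and that $J_0$ is real. I expect to verify it by diagonalizing $G^{-1/2}PG^{-1/2}$ on $V$ as a skew matrix with eigenvalues $\pm i\sigma_\ell$. Positivity of $I+iG^{-1/2}PG^{-1/2}$ gives $\sigma_\ell\le1$. The rank condition then forces all $\sigma_\ell=1$, since each $\sigma_\ell<1$ adds $2$ to the complex rank. Hence $(G^{-1/2}PG^{-1/2})^2=-I$, which gives the orthogonal complex structure directly. The same diagonalization also gives a cleaner proof of the rank count in (ii).
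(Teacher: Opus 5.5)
Your proposal is correct and follows essentially the same route as the paper: the identity $2G=(G+iP)+\overline{(G+iP)}$ for (i), the Schur complement with respect to $A$ combined with (i) for (ii), and for (iii) the normal form of $G^{-1/2}PG^{-1/2}$ on the quotient, where positivity gives $\sigma_\ell\leq1$ and the complex-rank bound forces every $\sigma_\ell=1$. The one loosely stated step, that $P'$ has real rank at most $2\rank_\C(D'+iP')$, is made precise exactly as in the paper by writing $2iP'=(D'+iP')-\overline{(D'+iP')}$ and using subadditivity of rank.
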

\begin{proof}
If $Gv=0$ for real $v$, then $v^*(G+iP)v=0$, and positivity gives $(G+iP)v=0$. Also $2G=(G+iP)+(G-iP)$, which proves the rank inequality in (i).

For (ii), write
\[
 G=\begin{pmatrix}A&B\\B^T&C\end{pmatrix},\qquad
 P=\begin{pmatrix}0&0\\0&Q\end{pmatrix}.
\]
The Schur complement $S=C-B^TA^{-1}B$ satisfies $S+iQ\geq0$. By (i), $\ker S\subseteq\ker Q$, so $\rank G=c+\rank S\geq c+2m$. Moreover, $2iQ=(S+iQ)-\overline{(S+iQ)}$ implies $\rank_\C(S+iQ)\geq m$. This gives the complex-rank assertion.

For (iii), the two rank bounds force $\rank G=\rank P=2n$ and equality of the kernels. On the quotient, make $G$ the identity. The real skew matrix has two-dimensional blocks with parameters $s_\alpha\geq0$. Positivity gives $s_\alpha\leq1$, while complex rank at most $n$ forces every block of $I+iP$ to have rank one. Thus all $s_\alpha=1$.
\end{proof}

\begin{proof}[Proof of Theorem~\ref{thm:dimensionintro}]
If $q=0$, there is nothing to prove. Write $\g=\z\oplus\s$, $c=\dim\z$ and $d=\dim\s$. The center is commuting, so its Gram block is positive definite almost everywhere by Theorem~\ref{thm:commuting}. Its Poisson rows and columns are zero. At a point where $\rank P=2m$, Lemma~\ref{lem:rank} gives
\begin{equation}\label{eq:realbudget}
 c+2m\leq\rank G\leq r.
\end{equation}

Decompose $\s$ into its finitely many simple ideals and fix a nonzero vector $\xi_\alpha$ in each. By Corollary~\ref{cor:levelsets}, $u_{\xi_\alpha}(x)\neq0$ on a common set of full measure. At such a point, the evaluation covector $\ell_x\in\s^*$ has nonzero component in every simple dual factor. The simply connected compact group with Lie algebra $\s$ acts on the coadjoint orbit of $\ell_x$ with injective infinitesimal action. Indeed, a nonzero infinitesimal kernel would contain a simple ideal, forcing the corresponding evaluation component to be zero.

The orbit has real dimension $2m=\rank P(x)$ and its canonical invariant K\"ahler structure \cite[Section 1]{Rieffel}. The isotropy acts faithfully on its complex $m$-dimensional tangent space, hence embeds infinitesimally in $\mathfrak u(m)$. An isometry fixing a point with identity differential is the identity, which justifies this faithfulness. Therefore
\begin{equation}\label{eq:semisimplebudget}
 d\leq2m+m^2.
\end{equation}
This is also the dimension estimate of \cite[Proposition 3.1]{CWZ}, applied only to the smooth auxiliary orbit. If $\s=0$, read it with $m=d=0$.

Combining \eqref{eq:realbudget} and \eqref{eq:semisimplebudget},
\[
 q=c+d\leq r+m^2\leq r+\lfloor r/2\rfloor^2.
\]
The examples proving sharpness in every dimension are constructed in Section~\ref{sec:examples}. The argument does not assume that the rank of $P$ is already constant.
\end{proof}

\begin{corollary}\label{cor:dimensionequality}
If $r=2k+\epsilon$, $\epsilon\in\{0,1\}$, and $q=Q(r)$, then
\[
 \dim\z=\epsilon,\qquad \dim\s=k^2+2k,\qquad
 \rank P=2k,\qquad\rank G=r\quad\mu\text{-a.e.}
\]
In particular, a volume-collapsing sequence with $\lambda_{n^2}\to1$ has essential and Hausdorff dimensions $2n-1$, critical multiplicity $n^2$, and a one-dimensional center. Its complex Gram matrix has rank $n$ almost everywhere.
\end{corollary}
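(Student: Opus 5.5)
We trace the equality case through the chain of inequalities in the proof of \cref{thm:dimensionintro}. Write $c=\dim\z$, $d=\dim\s$, and let $2m(x)=\rank P(x)$. At almost every point we have
\[
 c+2m(x)\leq\rank G(x)\leq r,\qquad d\leq 2m(x)+m(x)^2 .
\]
Adding these gives
\[
 q\leq r+m(x)^2-\bigl(r-\rank G(x)\bigr)-\bigl(r-c-2m(x)\bigr)+\ldots
\]
More usefully, we note $q=c+d\leq c+2m+m^2$ and $c+2m\leq r$. If $q=Q(r)=r+k^2$ with $r=2k+\epsilon$, then
\[
 r+k^2\leq c+2m+m^2\leq r+m^2 .
\]
Hence $m(x)\geq k$. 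Since $2m(x)\leq r-c\leq 2k+\epsilon$, we also get $m(x)\leq k$, so $m(x)=k$ almost everywhere. Substituting back, every inequality must be an equality: $c+2k=r$, which gives $c=\epsilon$; $d=2k+k^2$; and $\rank G=r$ almost everywhere. This uses only the pointwise inequalities already established, and holds at any point of the full-measure set where both budgets \eqref{eq:realbudget} and \eqref{eq:semisimplebudget} are valid.

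For the collapsing statement, take a sequence with $\Vol\to0$ and $\lambda_{n^2}\to1$. Then $q\geq n^2$. We need $r\leq 2n-1$. The essential dimension of a collapsed $\RCD(1,2n)$ limit of $2n$-manifolds is strictly below $2n$. I would justify this by volume convergence: an essential dimension of $2n$ forces a noncollapsed limit, by the De Philippis–Gigli / Brena–Gigli–Honda–Zhu characterization. In that case $\mu$ is a multiple of $\mathcal H^{2n}$, and Colding's volume convergence contradicts collapse. This is the step I expect to be the main obstacle, since it imports external structure theory and depends on the normalized measures.

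Given $r\leq 2n-1$, we have $Q(r)\leq Q(2n-1)=2n-1+(n-1)^2=n^2$. Therefore $q=n^2$ and $r=2n-1$, which is the equality case with $k=n-1$ and $\epsilon=1$. By the first part, the center is one-dimensional, $\rank P=2n-2$, and $\rank G=2n-1$.

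For the Hausdorff dimension, the lower bound $\dim_H\geq r=2n-1$ is standard, since essential dimension never exceeds Hausdorff dimension. For the upper bound, a Hausdorff dimension of $2n$ would again force noncollapse by the same result, so $\dim_H\leq 2n-1$ up to the known bound for collapsed limits. I would cite the Cheeger–Colding bound $\dim_H\leq 2n-1$ for collapsed limits of manifolds.

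For the complex rank, we apply \cref{lem:rank}(ii). The block indexed by $\z$ has $c=1$ and is positive definite by \cref{thm:commuting}. The matrix $P$ vanishes on its rows and columns, and $\rank P=2(n-1)$. This gives $\rank_\C H\geq 1+(n-1)=n$. Combined with \eqref{eq:complexrank}, the complex rank equals $n$ almost everywhere.
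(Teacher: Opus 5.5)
Your proof is correct and follows the paper's: equality in $q\le c+2m+m^2\le r+m^2\le r+k^2$ forces $m=k$, $c=\epsilon$, $\dim\s=k^2+2k$ and $\rank G=r$; then $q\ge n^2$ together with $Q(2n-2)=n^2-1<n^2=Q(2n-1)$ (you use the strict monotonicity of $Q$ implicitly) gives $q=n^2$ and $r=2n-1$, and Lemma~\ref{lem:rank}(ii) with \eqref{eq:complexrank} gives complex rank $n$.

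The one difference is your route to $r\le 2n-1$ through Brena--Gigli--Honda--Zhu and volume convergence. It is valid but unnecessary. The collapse dimension gap $\dim_{\HH}Z\le 2n-1$, which you invoke anyway for the Hausdorff dimension (the paper cites De Philippis--Gigli), combined with $r\le\dim_{\HH}Z$, gives it immediately; this is how the paper argues. Note also that Brena--Gigli--Honda--Zhu concerns essential dimension, not Hausdorff dimension $2n$, so "by the same result" is misattributed there.
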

\begin{proof}
Equality in the chain $q\leq c+m^2+2m\leq r+m^2\leq r+k^2$ forces all the asserted equalities. For the last statement, the collapse dimension gap \cite[Theorem 1.2]{DPG} gives $\dim_{\HH}Z\leq2n-1$. Spectral convergence gives $q\geq n^2$, while $Q(2n-2)=n^2-1$ and $Q(2n-1)=n^2$. Hence $q=n^2$ and $r=2n-1$. The general inequality $r\leq\dim_{\HH}Z$ gives equality of dimensions here. Finally, Lemma~\ref{lem:rank}(ii) gives complex rank at least $1+(n-1)=n$, and \eqref{eq:complexrank} gives the reverse bound.
\end{proof}

For high multiplicity we will also use the complex dimension budget $c+m\leq n$, rather than just \eqref{eq:realbudget}.
\begin{corollary}\label{cor:centerexclusion}
If $q>n^2$, then $\z=0$ and
\[
 \rank P=\rank G=2n,\qquad \ker G=\ker P\quad\mu\text{-a.e.}
\]
\end{corollary}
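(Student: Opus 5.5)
The plan is to redo the counting in the proof of Theorem~\ref{thm:dimensionintro}, this time with the complex budget in place of the real one. Write $c=\dim\z$ and $d=\dim\s$, so $q=c+d$.

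First, the complex budget. Fix a point of full measure where the center Gram block is positive definite (Theorem~\ref{thm:commuting}) and $\rank P(x)=2m$. Lemma~\ref{lem:rank}(ii) gives $\rank_\C H\geq c+m$. Together with \eqref{eq:complexrank} this yields $c+m\leq n$.

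Second, the semisimple budget. At the same point, \eqref{eq:semisimplebudget} gives $d\leq 2m+m^2$. Combining the two bounds,
\[
 q=c+d\leq c+m^2+2m\leq c+(n-c)^2+2(n-c).
\]
The right-hand side is the function $\phi(c)=c+(n-c)^2+2(n-c)$. On integers $0\leq c\leq n$ it is decreasing: $\phi(c)-\phi(c+1)=2(n-c)\geq 2$ when $c<n$. Its values are $\phi(0)=n^2+2n$ and $\phi(1)=1+(n-1)^2+2(n-1)=n^2$. So $q>n^2$ forces $c=0$, i.e.\ $\z=0$.

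Third, the ranks. With $c=0$ we need $m^2+2m\geq q>n^2$. Since also $m\leq n$, this gives $m=n$, because $(n-1)^2+2(n-1)=n^2-1<n^2$. This holds at every point of a common full-measure set, since the counting did not assume constancy of $\rank P$. Hence $\rank P=2n$ almost everywhere. Lemma~\ref{lem:rank}(iii), applied together with $\rank_\C H\leq n$, then gives $\rank G=2n$ and $\ker G=\ker P$ almost everywhere.

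The only delicate point is making sure all the inputs hold simultaneously on one full-measure set. These inputs are: the positive definiteness of the center block, the nonvanishing of the evaluation components in each simple factor (Corollary~\ref{cor:levelsets}, needed for the faithful coadjoint-orbit estimate), and \eqref{eq:complexrank}. This is handled by intersecting countably many full-measure sets. Everything else is the integer arithmetic above.
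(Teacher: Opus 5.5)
Your proof is correct and follows the paper's argument essentially verbatim. The complex budget $c+m\leq n$ comes from Lemma~\ref{lem:rank}(ii) and \eqref{eq:complexrank}; combined with the semisimple bound \eqref{eq:semisimplebudget}, it rules out first $c\geq1$ and then $m\leq n-1$, and Lemma~\ref{lem:rank}(iii) gives the rank and kernel conclusions. Your monotonicity computation for $\phi(c)$ is just a repackaging of the paper's identity $(n-c)^2+2(n-c)+c=n^2-(c-1)(2n-c)$.
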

\begin{proof}
Lemma~\ref{lem:rank}(ii) and \eqref{eq:complexrank} give $c+m\leq n$. If $c\geq1$, then
\[
 q\leq(n-c)^2+2(n-c)+c
 =n^2-(c-1)(2n-c)\leq n^2,
\]
a contradiction. Thus $c=0$. If $m\leq n-1$, \eqref{eq:semisimplebudget} gives $q\leq n^2-1$, again impossible. Hence $m=n$, and Lemma~\ref{lem:rank}(iii) completes the proof.
\end{proof}

\section{Full spectral actions and symmetry loss}

\subsection{Derivations on every eigenspace}
No lower bound on $q$ is imposed in this section. Let $\{\phi_\ell\}_{\ell\geq0}$ be a real orthonormal eigenbasis of $\LL$ on $Z$, with $\LL\phi_\ell=\nu_\ell\phi_\ell$ and $\phi_0=1$. Choose corresponding convergent eigenfunctions on $M_j$. Set $X_{j,a}=J_j\nabla u_{j,a}$.

For each fixed $a,\ell$,
\[
 |\nabla(X_{j,a}\phi_{j,\ell})|
 \leq |\nabla X_{j,a}|\,|\nabla\phi_{j,\ell}|
       +|X_{j,a}|\,|\Hess\phi_{j,\ell}|.
\]
The right-hand side is bounded in $L^2$. A diagonal subsequence therefore defines
\begin{equation}\label{eq:Ddefinition}
 D_a\phi_\ell=L^2\text{-}\lim_j X_{j,a}\phi_{j,\ell}.
\end{equation}
Let $\FF$ be the algebraic direct sum of all real eigenspaces. Extend $D_\xi=\sum_a\xi_aD_a$ linearly on $\FF$.

\begin{lemma}\label{lem:Dproperties}
The operators satisfy
\begin{align}
 D_a\ker(\LL-\nu)&\subseteq\ker(\LL-\nu),\label{eq:Dpreserve}\\
 \ip{D_af}{h}_{L^2}&=-\ip f{D_ah}_{L^2},\label{eq:Dskew}\\
 \norm{D_\xi f}_2&\leq C_n|\xi|\norm{\nabla f}_2.\label{eq:Denergy}
\end{align}
Consequently they extend continuously from $W^{1,2}$ to $L^2$. For $f,h\in\FF$,
\begin{equation}\label{eq:DLeibniz}
 D_a(fh)=fD_ah+hD_af.
\end{equation}
The same identity holds if $f\in W^{1,2}$ and $h\in\FF$. On $\FF$,
\begin{equation}\label{eq:DLie}
 [D_a,D_b]=\sum_c c_{ab}^{\ c}D_c,\qquad
 D_au_b=\sum_c c_{ab}^{\ c}u_c.
\end{equation}
\end{lemma}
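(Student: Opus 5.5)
All properties will be read off from identities that hold on the smooth manifolds $M_j$ and then passed to the limit using Proposition~\ref{prop:compactness}, \eqref{eq:strongGram} and \eqref{eq:remainder}. The basic facts are these. The fields $X_{j,a}$ are divergence free by \eqref{eq:hamiltonian}. They are almost Killing, since $\norm{\Lie_{X_{j,a}}g}_2\leq C\sqrt{\eta_j}$ by Lemma~\ref{lem:bochner}. Moreover, $X_{j,a}h=p(u_{j,a},h)$ up to the sign convention.

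The proof of \eqref{eq:Dskew} is direct. Since $\diver X=0$, we have $\int (Xf)h\dd\mu_j=-\int f(Xh)\dd\mu_j$. The integrands are products of an $L^2$-convergent sequence with a uniformly convergent one, so the identity passes to the limit. For \eqref{eq:Denergy}, I write $|X_{j,a}\phi|\leq|\nabla u_{j,a}|\,|\nabla\phi|$. Lemma~\ref{lem:lowfrequency} bounds $\norm{\nabla u_{j,a}}_\infty$ by a constant depending only on $n$. Combined with $W^{1,2}$ convergence, this gives the bound for a single eigenfunction. For a finite sum it gives $\norm{D_\xi f}_2\leq C|\xi|\norm{\nabla f}_2$, and the right side is independent of the number of modes. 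The continuous extension from $W^{1,2}$ to $L^2$ then follows by density, from Proposition~\ref{prop:compactness}, together with skewness (the $L^2$ extension is defined by duality on the domain).

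For \eqref{eq:DLeibniz}, the identity $X(\phi_{j,\ell}\phi_{j,m})=\phi_{j,\ell}X\phi_{j,m}+\phi_{j,m}X\phi_{j,\ell}$ holds on $M_j$. The product $\phi_{j,\ell}\phi_{j,m}$ converges strongly in $W^{1,2}$, but it is no longer a finite spectral sum. So I would first show that $D_a$ is well defined on convergent $W^{1,2}$ sequences with bounded gradients, meaning $D_a f=\lim X_{j,a}f_j$. This holds because $X_{j,a}$ is uniformly bounded and $\nabla f_j\to\nabla f$ strongly. That version of \eqref{eq:Denergy} yields the Leibniz rule, including the case $f\in W^{1,2}$, $h\in\FF$.

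The main obstacle is \eqref{eq:Dpreserve}. On $M_j$, I would use
\[
 \Delta_{\R}(X\phi)=X\Delta_{\R}\phi+2\ip{\Lie_Xg}{\Hess\phi}+(\text{terms in }\nabla\diver\text{ and }\diver\Lie_Xg).
\]
The cross terms are controlled in $L^1$ by $\norm{\Lie_{X}g}_2\norm{\Hess\phi}_2\leq C\sqrt{\eta_j}$, by Lemma~\ref{lem:bochner} and Lemma~\ref{lem:lowfrequency}. Testing against a fixed-window eigenfunction $\psi_j$ and integrating by parts (the divergence of $\Lie_Xg$ paired with $\nabla\psi_j$ is handled by moving the derivative onto $\psi_j$) gives
\[
 \int (X_{j,a}\phi_{j,\ell})\,(\nu_{j,\ell}-\nu_{j,m})\psi_{j,m}\dd\mu_j\to0 .
\]
In the limit, $\ip{D_a\phi_\ell}{\phi_m}=0$ whenever $\nu_\ell\neq\nu_m$. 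Since $D_a\phi_\ell\in L^2$ and the eigenbasis is complete, $D_a\phi_\ell$ lies in $\ker(\LL-\nu_\ell)$.

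Finally, for \eqref{eq:DLie}: $D_au_b$ is the limit of $p(u_{j,a},u_{j,b})$, whose limit is $\sum_c c_{ab}^{\ c}u_c$ by \eqref{eq:bracketprojection} and \eqref{eq:remainder}. On $M_j$,
\[
 [X_{j,a},X_{j,b}]=\pm X_{p(u_{j,a},u_{j,b})}=\pm\sum_c c_{ab}^{\ c}(j)X_{j,c}+\pm J\nabla r_{j,ab}.
\]
The last term tends to zero in $L^2$, and hence tends to zero when applied to $\phi_{j,\ell}$, because the gradients of $\phi_{j,\ell}$ are bounded. The remaining step is to identify $\lim X_aX_b\phi_j$ with $D_aD_b\phi$. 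Here $X_{j,b}\phi_{j,\ell}$ converges only in $L^2$. However, by \eqref{eq:Dpreserve} it is close to its projection onto a fixed spectral window: the mass outside that window vanishes by the same almost-Killing computation. After projecting, we deal with a finite convergent spectral sum to which $D_a$ applies. Controlling this projection error uniformly is the delicate point. I would first try to bound it through the gap-type estimate $\ip{(A_j-\nu)g}{g}\leq C\sqrt{\eta_j}$ applied to $g=X_{j,b}\phi_{j,\ell}$, as was done for the remainder $r_{j,ab}$.
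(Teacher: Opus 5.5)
\paragraph{Where the proposal stands.} Most of your proposal is sound and follows the paper. For \eqref{eq:Dpreserve} you reach the same weak identity that the paper obtains more directly as \eqref{eq:commutatorenergy}, by integrating $X\ip{\nabla f}{\nabla h}$. You treat \eqref{eq:Dskew}, \eqref{eq:Denergy} and the second identity in \eqref{eq:DLie} as the paper does.

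For \eqref{eq:DLeibniz}, your principle that $X_{j,a}f_j\to D_af$ along strongly $W^{1,2}$-convergent sequences is true. It does not follow merely from ``$X_{j,a}$ bounded and $\nabla f_j\to\nabla f$'', because no limiting vector field is available yet to pair with $\nabla f$. It needs an approximation step:
\begin{itemize}
 \item approximate $f$ by a finite spectral sum $F$;
 \item use $\norm{X_{j,a}(f_j-F_j)}_2\leq C\norm{\nabla(f_j-F_j)}_2$;
 \item use $\limsup_j\norm{\nabla(f_j-F_j)}_2=\norm{\nabla(f-F)}_2$.
\end{itemize}
The paper makes this concrete through the uniform Laplacian bound on $f_jh_j$ and the tail estimate \eqref{eq:spectraltail}.

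\paragraph{The gap: the first identity in \eqref{eq:DLie}.} You aim for $X_{j,a}X_{j,b}\phi_{j,\ell}\to D_aD_b\phi_\ell$ and leave the needed projection estimate open. The tool you suggest would not supply it. The bound used for $r_{j,ab}$ works because $1$ is the bottom of the nonzero spectrum, so $A_j-1$ is coercive on the mean-zero part of $E_j^\perp$. At a general level $\nu_\ell$, the form $A_j-\nu_\ell$ is indefinite off the window. An upper bound for $\ip{(A_j-\nu_\ell)g}{g}$ therefore says nothing about the mass of $g$ outside that window.

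The missing idea is to argue weakly. For $f\in\FF$, \eqref{eq:Dpreserve} puts $[D_a,D_b]f$ in $\FF$, so it suffices to test against $h\in\FF$. By \eqref{eq:Dskew},
\[
 \ip{[D_a,D_b]f}{h}=\ip{D_af}{D_bh}-\ip{D_bf}{D_ah}
 =\lim_j\int_{M_j}\bigl([X_{j,a},X_{j,b}]f_j\bigr)h_j\dd\mu_j .
\]
The second equality holds because each factor is a strong $L^2$ limit of a single application $X_{j,\cdot}f_j$ or $X_{j,\cdot}h_j$, and $\diver X_{j,a}=0$ lets you integrate by parts on $M_j$. Now use $[X_{j,a},X_{j,b}]=J_j\nabla p_j(u_{j,a},u_{j,b})$ together with \eqref{eq:bracketprojection}. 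The remainder contributes at most $\norm{\nabla r_{j,ab}}_2\norm{\nabla f_j}_\infty\norm{h_j}_2\to0$. No projection error ever arises.

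If you prefer to keep your strong-convergence route, the correct tool is a dual-norm estimate rather than a quadratic-form bound. Summing \eqref{eq:commutatorenergy} over an orthonormal eigenbasis $\psi_{j,m}$ of $M_j$ with eigenvalues $\sigma_m$ gives
\[
 \sum_m(1+\sigma_m)^{-1}(\sigma_m-\nu_{j,\ell})^2\,|\ip{X_{j,b}\phi_{j,\ell}}{\psi_{j,m}}|^2\leq C\eta_j .
\]
This does control the out-of-window part of $X_{j,b}\phi_{j,\ell}$ in $W^{1,2}$.
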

\begin{proof}
For a divergence-free smooth field $X$ and real eigenfunctions $\LL_j f=\nu f$, $\LL_j h=\sigma h$, integration by parts gives
\begin{equation}\label{eq:commutatorenergy}
 (\sigma-\nu)\int(Xf)h\dd\mu_j
 =\int(\Lie_Xg_j)(\nabla f,\nabla h)\dd\mu_j.
\end{equation}
This follows by integrating $X\ip{\nabla f}{\nabla h}$ and comparing the differentiated gradients. For $X=X_{j,a}$ the right-hand side tends to zero by \eqref{eq:hamiltonian} and Lemma~\ref{lem:bochner}. Testing against the complete limiting eigenbasis gives \eqref{eq:Dpreserve}. Divergence-free integration gives \eqref{eq:Dskew}. The uniform bound for $X_{j,\xi}$ gives \eqref{eq:Denergy}; energy density then gives the unique extension.

For the product rule, let $f_j,h_j$ be the corresponding finite spectral sums and set $v_j=f_jh_j$. The formula
\[
 \LL_jv_j=(\LL_jf_j)h_j+f_j(\LL_jh_j)-2\ip{\nabla f_j}{\nabla h_j}
\]
gives a uniform $L^2$ bound. Choose $R$ outside the spectrum of $\LL$, and let $v_{j,R}$ be the projection onto frequencies at most $R$. Spectral orthogonality gives
\begin{equation}\label{eq:spectraltail}
 \norm{\nabla(v_j-v_{j,R})}_2^2\leq R^{-1}\norm{\LL_jv_j}_2^2.
\end{equation}
For fixed $R$, the coefficients converge and \eqref{eq:Ddefinition} applies to $v_{j,R}$. The smooth product rule gives
\[
 X_{j,a}(f_jh_j)=f_jX_{j,a}h_j+h_jX_{j,a}f_j
 \longrightarrow fD_ah+hD_af
\]
strongly in $L^2$. First let $j\to\infty$ and then $R\to\infty$ in \eqref{eq:spectraltail}. Equation~\eqref{eq:Denergy} identifies the result with $D_a(fh)$. Approximating a general $f\in W^{1,2}$ by finite spectral sums proves the stated extension; multiplication by the fixed bounded Lipschitz $h$ is continuous in $W^{1,2}$.

The second identity of \eqref{eq:DLie} is \eqref{eq:bracketprojection}. For the first, all compositions are well-defined on $\FF$ by \eqref{eq:Dpreserve}. For $f,h\in\FF$,
\[
 \ip{[D_a,D_b]f}{h}
 =-\ip{D_bf}{D_ah}+\ip{D_af}{D_bh}.
\]
Each product is a strong $L^2$ limit. On $M_j$,
$[X_{j,a},X_{j,b}]=J_j\nabla p_j(u_{j,a},u_{j,b})$.
The remainder applied to $f_j$ has $L^2$ norm bounded by
$\norm{\nabla r_{j,ab}}_2\norm{\nabla f_j}_\infty\to0$.
Testing with the full spectral core proves the identity.
\end{proof}

\begin{proposition}\label{prop:globalaction}
Let $S$ be the simply connected compact group with Lie algebra $\s$, and let
$\widetilde G=S\times(\z,+)$. The derivations integrate to a continuous right action of $\widetilde G$ on $Z$ by measure-preserving isometries. Its infinitesimal kernel is contained in $\z$. The restriction to $S$ has finite global kernel. For the integrated action,
\begin{equation}\label{eq:equivariance}
 u_\eta(x\cdot g)=u_{\Ad_g\eta}(x).
\end{equation}
\end{proposition}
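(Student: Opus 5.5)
The plan is to integrate the derivations $D_\xi$ of Lemma~\ref{lem:Dproperties} spectrally. The eigenspaces $V_\nu=\ker(\LL-\nu)$ are finite dimensional, and by \eqref{eq:Dpreserve}--\eqref{eq:Dskew} each $D_\xi$ restricts to a skew-symmetric operator on $V_\nu$. By \eqref{eq:DLie}, $\xi\mapsto D_\xi$ is a Lie algebra homomorphism $\g\to\bigoplus_\nu\mathfrak{so}(V_\nu)$. Since $S\times(\z,+)$ is simply connected, it integrates to an orthogonal representation $\rho$ of $\widetilde G$ on every $V_\nu$, and therefore on $L^2(Z)$ as a unitary representation commuting with $\LL$. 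The Leibniz rule \eqref{eq:DLeibniz} is what turns these operators into point maps. For $f,h\in\FF$, both $t\mapsto\rho(e^{t\xi})(fh)$ and $t\mapsto\rho(e^{t\xi})f\cdot\rho(e^{t\xi})h$ solve the same linear ODE $\dot F=D_\xi F$. The derivative of the product is computed with the extended product rule for $W^{1,2}\times\FF$ and bounded Lipschitz factors; uniqueness uses that $D_\xi$ is skew and $L^2$-bounded on $W^{1,2}$ through \eqref{eq:Denergy}. Hence $\rho(g)$ is multiplicative on the algebra generated by $\FF$. It preserves $\int\cdot\,d\mu$, since it fixes constants and is orthogonal, and it preserves $\mathcal E$, since it commutes with $\LL$.

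Next we realize $\rho(g)$ by a map. Since $\FF$ is uniformly dense in $C(Z)$ by Proposition~\ref{prop:compactness}, we first show that $\rho(g)$ is $L^\infty$-contractive: it is a multiplicative $L^2$-isometry, and applying it to $f^{2k}$ gives $\|\rho(g)f\|_{2k}=\|f\|_{2k}$ for all $k$. Thus it extends to a unital $*$-automorphism of $C(Z)$. Gelfand duality then yields a homeomorphism $T_g$ of $Z$ with $\rho(g)f=f\circ T_g$. Measure preservation follows from preservation of integrals. To get isometry, we use that $\rho(g)$ preserves $\mathcal E$ and, by multiplicativity, the carré du champ, so $|\nabla(f\circ T_g)|=|\nabla f|\circ T_g$. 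The Sobolev-to-Lipschitz property of $\RCD$ spaces, together with $d(x,y)=\sup\{f(x)-f(y):|\nabla f|\le1\}$, then gives that $T_g$ is an isometry. The composition law $T_{gh}=T_h\circ T_g$ makes this a right action. Continuity in $g$ comes from strong continuity of $\rho$ on $C(Z)$, which we obtain by uniform approximation with finite spectral sums, on which $\rho$ is continuous.

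For the remaining claims, \eqref{eq:equivariance} follows by integrating $D_\xi u_\eta=u_{[\xi,\eta]}$ on the finite-dimensional space $E$, where this formula is \eqref{eq:DLie}. This identifies $\rho(g)|_E$ with $\Ad_g$ once orientation and sign conventions are matched. The infinitesimal kernel $\kk$ is an ideal. If $\xi\in\kk$, then $0=D_\xi u_\eta=u_{[\xi,\eta]}$ for all $\eta$, so $\xi$ is central, i.e. $\kk\subseteq\z$. For the $S$-factor, we note that the kernel of $S\to\mathrm{Isom}(Z)$ is a closed normal subgroup whose Lie algebra lies in $\kk\cap\s=0$. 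It is therefore discrete in the compact group $S$, hence finite.

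The main obstacle will be the passage from an algebra automorphism defined on the spectral core to a pointwise isometry. This needs the $L^\infty$ bound, which is handled through even $L^p$ norms, and a careful justification of the ODE uniqueness for products with only $W^{1,2}$ regularity, where we must check that $D_\xi(fh)$ stays within the controlled class along the flow. We also have to verify the identification of the carré du champ, which relies on $\rho$ commuting with $\LL$ and on the polarization $2\ip{\nabla f}{\nabla h}=\LL(fh)\cdot(-1)+f\LL h+h\LL f$ holding on the algebra.
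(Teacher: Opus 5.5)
Your proposal is correct and follows essentially the same route as the paper: integrate on each eigenspace, derive multiplicativity from the Leibniz rule (your ODE-uniqueness phrasing is equivalent to the paper's check that $\rho(e^{-t\xi})\bigl(\rho(e^{t\xi})f\cdot\rho(e^{t\xi})h\bigr)$ is constant in $t$), get a sup-norm isometric automorphism of $C(Z)$ via even $L^{2m}$ norms and Gelfand duality, and obtain isometry from invariance of $|\nabla f|^2$ together with Sobolev-to-Lipschitz. The differences are cosmetic: you identify the carr\'e du champ through the $\LL$-polarization on test functions rather than the energy-measure identity $\int h|\nabla f|^2\,d\mu=\mathcal E(f,hf)-\tfrac12\mathcal E(f^2,h)$, and you should spell out the induction that takes two-factor multiplicativity to the powers $f^{2m}$ (one factor in $W^{1,2}\cap L^\infty$, one in $\FF$), which the paper handles by $L^2$-density against a bounded factor.
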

\begin{proof}
Each eigenspace is finite-dimensional and carries an orthogonal Lie algebra representation by Lemma~\ref{lem:Dproperties}. It integrates to the simply connected group $\widetilde G$. The Hilbert direct sum gives unitary operators $T_g$ satisfying
\[
 T_{gh}=T_gT_h,\qquad T_g\LL=\LL T_g,\qquad T_g1=1.
\]
They preserve the energy norm. The generator on a one-parameter subgroup is the skew-adjoint direct sum of its finite-dimensional generators; \eqref{eq:Denergy} puts $W^{1,2}$ in its domain.

For $f,h\in\FF$, differentiate in $L^2$ the expression
\[
 T_{\exp(-t\xi)}\bigl((T_{\exp(t\xi)}f)(T_{\exp(t\xi)}h)\bigr).
\]
The factors remain in fixed finite spectral spaces, and their product is in $W^{1,2}$. Equation~\eqref{eq:DLeibniz} makes the derivative zero. Thus $T_g(fh)=(T_gf)(T_gh)$ for every exponential, and hence for every $g$ in the connected group.

For fixed $h\in\FF$, approximate any $f\in L^2$ by finite spectral sums. Since $h$ and $T_gh$ are bounded, the same identity holds for this $f$. Iterating with powers of a finite spectral function gives
\[
 \int|T_gf|^{2m}\dd\mu=\int|f|^{2m}\dd\mu.
\]
Letting $m\to\infty$ proves equality of supremum norms. Here continuity and full support identify essential and ordinary suprema. Uniform density of $\FF$ now extends $T_g$ to a unital algebra automorphism of $C(Z)$. It is induced by a unique homeomorphism $\Phi_g$, with $T_gf=f\circ\Phi_g$. Strong continuity on the finite eigenspaces and the supremum-norm isometry imply strong continuity on $C(Z)$, hence joint continuity of the action. Preservation of integrals gives $(\Phi_g)_\#\mu=\mu$.

For bounded Sobolev functions, the energy measure is determined by
\[
 \int h|\nabla f|^2\dd\mu
 =\mathcal E(f,hf)-\tfrac12\mathcal E(f^2,h).
\]
Multiplicativity and energy invariance imply
\begin{equation}\label{eq:energyequivariance}
 |\nabla T_gf|^2=T_g(|\nabla f|^2)\quad\mu\text{-a.e.}
\end{equation}
At this stage $T_g$ is already composition with a measure-preserving homeomorphism, so this identity is meaningful also for bounded measurable right-hand sides. Approximation proves it for all Lipschitz $f$. Sobolev-to-Lipschitz then shows that $T_g$ preserves the class of $1$-Lipschitz functions. Applying this also to $g^{-1}$ proves that $\Phi_g$ is an isometry.

Equation~\eqref{eq:equivariance} follows from \eqref{eq:DLie}. If $D_\xi=0$, then $u_{[\xi,\eta]}=0$ for all $\eta$, and hence $\xi\in\z$. The kernel on the compact group $S$ has zero Lie algebra and is finite. No compactness of the factor $(\z,+)$ is needed in the preceding construction.
\end{proof}

\subsection{The loss matrix}
We give details of the module realization because a global $L^2$ operator bound alone would not suffice for the pointwise conclusions.

\begin{lemma}\label{lem:vectorrealization}
There are vector fields $Y_\xi\in L^\infty(TZ)$, linear in $\xi$, such that
\[
 D_\xi f=\ip{Y_\xi}{\nabla f}\quad(f\in W^{1,2}),\qquad
 |Y_\xi|^2\leq|\nabla u_\xi|^2\quad\mu\text{-a.e.}
\]
In particular, the matrix $B_{ab}=\ip{Y_a}{Y_b}$ satisfies $0\leq B\leq G$.
\end{lemma}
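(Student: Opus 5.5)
We construct $Y_\xi$ by Riesz representation in the tangent module, with the pointwise bound obtained by localizing the approximation $X_{j,\xi}f_j\to D_\xi f$.

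\emph{Localized bound.} Fix $\xi$ and $f,h\in\FF$ with $h\geq0$ a bounded Lipschitz spectral function; nonnegative functions are reached by density. Let $f_j,h_j$ be the convergent approximants. We want
\begin{equation}\label{eq:localDbound}
 \Bigl|\int h\,D_\xi f\,k\dd\mu\Bigr|
 \leq\int h\,|\nabla u_\xi|\,|\nabla f|\,|k|\dd\mu
\end{equation}
for bounded test functions $k$. On $M_j$ we have $X_{j,\xi}f_j=\ip{J\nabla u_{j,\xi}}{\nabla f_j}$, and pointwise
\[
 |X_{j,\xi}f_j|\leq|\nabla u_{j,\xi}|\,|\nabla f_j|.
\]
By \eqref{eq:strongGram}, $|\nabla u_{j,\xi}|^2\to|\nabla u_\xi|^2$ and $|\nabla f_j|^2\to|\nabla f|^2$ strongly in $L^2$, so the products of square roots converge strongly in $L^1$. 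The left side $X_{j,\xi}f_j$ converges to $D_\xi f$ in $L^2$ by \eqref{eq:Ddefinition} and linearity. We pair with $h_jk_j$, where $k_j$ is an $L^2$-strongly convergent uniformly bounded sequence approximating $k$, and pass to the limit. This gives \eqref{eq:localDbound}, hence
\[
 |D_\xi f|\leq|\nabla u_\xi|\,|\nabla f|\quad\mu\text{-a.e.}
\]
for $f\in\FF$. By \eqref{eq:Denergy} and energy density, the bound extends to all $f\in W^{1,2}$, after passing to a.e.-convergent subsequences of gradients.

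\emph{Representation.} With the pointwise bound and the Leibniz rule \eqref{eq:DLeibniz}, the map $\nabla f\mapsto D_\xi f$ is well-defined: if $\nabla f=\nabla f'$ a.e., the bound gives $D_\xi f=D_\xi f'$. It is $L^\infty$-linear on combinations $\sum_i g_i\nabla f_i$ with $g_i\in\FF$, $f_i\in\FF$, via $\sum g_iD_\xi f_i$. Well-definedness of this extension is again checked through the pointwise bound, applied to the smooth approximants $\sum g_{i,j}\ip{X_{j,\xi}}{\nabla f_{i,j}}$, which are dominated by $|\nabla u_{j,\xi}|\,|\sum g_{i,j}\nabla f_{i,j}|$. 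Such combinations are dense in $L^2(TZ)$ \cite{Gigli}. We thus obtain a module functional $L_\xi$ on $L^2(TZ)$ with $|L_\xi(v)|\leq|\nabla u_\xi|\,|v|$ a.e. The Riesz theorem for Hilbert modules then gives $Y_\xi\in L^2(TZ)$ with $L_\xi(v)=\ip{Y_\xi}{v}$ and $|Y_\xi|\leq|\nabla u_\xi|$. Since $|\nabla u_\xi|$ is bounded, $Y_\xi\in L^\infty(TZ)$. Linearity in $\xi$ follows from linearity of $D_\xi$ and uniqueness in the Riesz representation.

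\emph{Matrix inequality.} We apply $|Y_\xi|^2\leq|\nabla u_\xi|^2$ with $\xi$ ranging over a countable dense set of $\R^q$. Both sides are quadratic forms in $\xi$, so a.e. we get $\xi^TB\xi\leq\xi^TG\xi$ for all $\xi$, that is $0\leq B\leq G$.

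The main obstacle is the well-definedness of the module extension in the second step. It needs the pointwise bound for combinations $\sum g_i\nabla f_i$, not just for single gradients. We handle it by running the localized limit argument on $M_j$ directly for these combinations, using strong $L^2$ convergence of $|\sum g_{i,j}\nabla f_{i,j}|^2$, which follows from \eqref{eq:strongGram} together with uniform convergence of the $g_{i,j}$.
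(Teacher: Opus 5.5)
Your proposal is correct and follows essentially the paper's route. You pass the smooth inequality $\bigl|\sum_i g_{i,j}X_{j,\xi}f_{i,j}\bigr|\leq|\nabla u_{j,\xi}|\,\bigl|\sum_i g_{i,j}\nabla f_{i,j}\bigr|$ to the limit in integrated localized form via \eqref{eq:Ddefinition} and \eqref{eq:strongGram}, which makes the module map $\sum_i g_i\nabla f_i\mapsto\sum_i g_iD_\xi f_i$ well defined and bounded by $|\nabla u_\xi|$; you then extend it by density, represent it through Gigli's Hilbert-module Riesz theorem, and obtain $B\leq G$ from a countable dense set of $\xi$ plus continuity. The only differences are cosmetic: your coefficients lie in $\FF$ with their spectral approximants and you add a preliminary single-gradient step, whereas the paper uses bounded continuous coefficients in a common realization and reaches bounded measurable ones by approximation.
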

\begin{proof}
For finitely many spectral functions $f_i$ and bounded coefficients $a_i$, the smooth inequality
\[
 \left|\sum_i a_iX_{j,\xi}f_{j,i}\right|^2
 \leq|\nabla u_{j,\xi}|^2
       \left|\sum_i a_i\nabla f_{j,i}\right|^2
\]
passes to the limit. Initially take coefficients obtained from bounded continuous approximations in a common realization of the measured convergence. Strong scalar convergence in \eqref{eq:Ddefinition} and \eqref{eq:strongGram} proves the integrated localized inequality. Bounded measurable coefficients follow by approximation. The resulting assignment
$\sum_i a_i\,df_i\mapsto\sum_i a_iD_\xi f_i$
is well-defined on the generated cotangent module and bounded by $|\nabla u_\xi|$. Finite spectral functions are energy-dense, so it extends to the whole cotangent module. The Hilbert-module duality of \cite{Gigli} represents it by $Y_\xi$. Taking rational $\xi$ first and then continuity proves the simultaneous quadratic inequality $B\leq G$.
\end{proof}

\begin{proposition}\label{prop:lossrank}
Let $r$ be the essential dimension. Then
\begin{equation}\label{eq:lossrank}
 \DD:=G-B\geq0,
 \qquad \rank\DD\leq2n-r\quad\mu\text{-a.e.}
\end{equation}
\end{proposition}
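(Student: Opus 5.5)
The inequality $\DD\geq0$ is immediate from Lemma~\ref{lem:vectorrealization}, since $B\leq G$ there. The substance is the pointwise rank bound $\rank\DD\leq 2n-r$. I would obtain it by comparing two structures at almost every point: the span of the limiting fields $Y_\xi$ in the $r$-dimensional tangent fibre, and the complex Gram matrix $H=G+iP$, whose complex rank is at most $n$ by \eqref{eq:complexrank}.

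\textbf{Step 1: identify $B$ with $P$.} At a.e.\ point, consider the map $\xi\mapsto Y_\xi$ and the map $\xi\mapsto\nabla u_\xi$. I claim that $\ip{Y_\xi}{\nabla u_\eta}=D_\xi u_\eta=u_{[\xi,\eta]}=P(\xi,\eta)$, using \eqref{eq:DLie}. So the mixed Gram block between $Y$ and $\nabla u$ is the skew matrix $P$ (up to sign convention). On $M_j$ the analogous fields are $J\nabla u_{j,\xi}$. Their Gram matrix with each other equals $G_j$, and their mixed block with $\nabla u_{j,\eta}$ equals $P_j$. Hence the $2q\times2q$ Gram matrix of the combined system $(\nabla u,J\nabla u)$ on $M_j$ is
\[
\begin{pmatrix}G_j&-P_j\\P_j&G_j\end{pmatrix},
\]
whose real rank is $2\rank_\C H_j\leq2n$. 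In the limit, the combined system $(\nabla u_\xi,Y_\xi)$ in $L^\infty(TZ)$ has Gram matrix
\[
\mathcal M=\begin{pmatrix}G&-P\\P&B\end{pmatrix}.
\]
The defect is exactly $\begin{pmatrix}G&-P\\P&G\end{pmatrix}-\mathcal M=\begin{pmatrix}0&0\\0&\DD\end{pmatrix}$.

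\textbf{Step 2: rank counting.} The limit of the smooth block matrices is $\mathcal N=\begin{pmatrix}G&-P\\P&G\end{pmatrix}$, which has real rank $2\rank_\C H\leq 2n$ a.e.; it is the realification of $H$. On the other hand, $\mathcal M$ is a Gram matrix of vectors in an $r$-dimensional fibre, so $\rank\mathcal M\leq r$. Moreover, the $\nabla u$ vectors span the same space as the full system wherever critical gradients span; in general, $\rank G=\rank_{\R}$ of the span of the $\nabla u$. I would then argue as follows. Since $B\leq G$ and $\mathcal M\geq0$, subadditivity gives $\rank\DD\leq\rank\mathcal N-\rank\mathcal M+(\text{correction})$, which is not immediate. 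A cleaner route is to pass to the Schur complements. Assume first that $G$ is invertible on its range (quotient by $\ker G=$ common kernel, Lemma~\ref{lem:rank}(i)). Then
\[
\rank\mathcal N=\rank G+\rank(G+PG^{+}P),\qquad \rank\mathcal M=\rank G+\rank(B+PG^{+}P).
\]
Since $B+PG^{+}P\geq0$ and $\DD=(G+PG^{+}P)-(B+PG^{+}P)$, we have $\rank\DD\leq\rank(G+PG^{+}P)\leq 2n-\rank G$. It then remains to show $\rank G\geq r$, i.e.\ that the critical gradients span the tangent module a.e. If they do not, I would instead bound $\rank\DD$ by working on the orthogonal complement of the span of the $\nabla u$. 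I expect this to be handled by showing that $Y_\xi$ lies in the closure of the span of $\nabla u$, because $D_\xi$ is determined by its action on eigenfunctions whose gradients span the module.

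\textbf{Main obstacle.} The hardest step is the pointwise passage to the limit of the rank bound $\rank\mathcal N\leq 2n$, together with the honest use of the $r$-dimensional fibre. Only weak/strong $L^2$ convergence is available: the Gram entries $G,P$ converge strongly by \eqref{eq:strongGram}, while $B$ is only defined through the limit. Rank is lower semicontinuous under a.e.\ convergence, which goes the right way for $\mathcal N$. The real difficulty is the relation between $\DD$ and the gap between $\rank\mathcal N$ and $r$; I would make it precise via the Schur-complement identity above on the full-measure set where Theorem~\ref{thm:commuting} and Corollary~\ref{cor:levelsets} hold.
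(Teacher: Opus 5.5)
\textbf{Gap: the reduction to $\rank G\geq r$ is false.} Your preliminary steps are sound. $\DD\geq0$ does follow from Lemma~\ref{lem:vectorrealization}. The matrix $\begin{pmatrix}G&-P\\P&G\end{pmatrix}$ is the a.e.\ limit of smooth Gram matrices of real rank at most $2n$. Your generalized Schur-complement comparison correctly yields $\rank\DD\leq 2n-\rank G$.

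But the critical gradients need not span the tangent module; the paper treats spanning as an extra hypothesis in Theorem~\ref{thm:lossintro}. Take the constant sequence $\CP^1\times(\CP^1,\tfrac12\omega_1)$. It satisfies $\Ric\geq\omega$, with $q=3$, $\rank G=2$ and $r=4$. The statement requires $\rank\DD\leq0$, while your bound gives only $\rank\DD\leq2$.

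Your fallback, that $Y_\xi$ lies in the span of the critical gradients, is also false. For $\xi\in\z$ one has $\ip{Y_\xi}{\nabla u_b}=D_\xi u_b=u_{[\xi,e_b]}=0$ for every $b$. So the claim would force $Y_\xi=0$, i.e.\ $\kk=\z$ always. This fails, for example, for a noncollapsed toric K\"ahler--Einstein surface, where $\g=\z$ is the torus algebra but $\kk=0$. More fundamentally, a frame built only from the $\nabla u_b$ sees at most $\rank G$ of the $r$ limiting directions. It therefore cannot show that the lost part of $J_j\nabla u_{j,a}$ is asymptotically orthogonal to the remaining directions.

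\textbf{Missing idea: test against the whole spectral resolution.} Gradients of finite spectral functions generate the tangent module. So on measurable pieces you can pick $\psi_1,\ldots,\psi_r$, from arbitrary eigenspaces, with $\nabla\psi_\alpha$ a basis of the fibre. Set
\[
A_{\alpha\beta}=\ip{\nabla\psi_\alpha}{\nabla\psi_\beta}>0,\qquad C_{a\alpha}=D_a\psi_\alpha=\ip{Y_a}{\nabla\psi_\alpha},
\]
so that $B=CA^{-1}C^T$.

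On $M_j$, the Gram matrix of $(\nabla\psi_{j,\alpha},J_j\nabla u_{j,a})$ is $\begin{pmatrix}A_j&C_j^T\\C_j&G_j\end{pmatrix}\geq0$, of rank at most $2n$. Because $D_a$ was defined on every eigenspace, $C_j=(X_{j,a}\psi_{j,\alpha})\to C$ strongly. The lower block tends to $G$, not $B$, since $J_j\nabla u_{j,a}$ keeps its full energy.

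Positivity and the vanishing of all $(2n+1)$-minors pass to the limit. The Schur complement of the $r\times r$ block $A$ is exactly $G-CA^{-1}C^T=\DD$. This gives $\DD\geq0$ and $r+\rank\DD\leq2n$. Neither $P$ nor the critical gradients are needed for this.
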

\begin{proof}
A countable collection of gradients of finite spectral functions generates the tangent module. On countably many measurable pieces, choose $r$ of them, $\nabla\psi_1,\ldots,\nabla\psi_r$, as a basis. On each piece set
\[
 A_{\alpha\beta}=\ip{\nabla\psi_\alpha}{\nabla\psi_\beta},\qquad
 C_{a\alpha}=D_a\psi_\alpha.
\]
Here $A>0$, and expansion of $Y_a$ in this basis gives
\begin{equation}\label{eq:visibleSchur}
 B=CA^{-1}C^T.
\end{equation}
On the smooth approximants, the joint Gram matrix of the $r+q$ vectors
\[
 \nabla\psi_{j,1},\ldots,\nabla\psi_{j,r},
 \quad J_j\nabla u_{j,1},\ldots,J_j\nabla u_{j,q}
\]
is
\[
 \begin{pmatrix}A_j&C_j^T\\C_j&G_j\end{pmatrix}\geq0
\]
and has real rank at most $2n$.
All its entries converge strongly in $L^2$ and are uniformly bounded. Positivity and the vanishing of all $(2n+1)$-minors therefore pass to the limit. Taking the Schur complement of the positive block $A$ gives
\[
 G-CA^{-1}C^T\geq0,\qquad
 r+\rank(G-CA^{-1}C^T)\leq2n.
\]
Equation~\eqref{eq:visibleSchur} proves the assertion. The inverse is used only pointwise on a full-rank measurable piece; it is never differentiated.
\end{proof}

\begin{proof}[Proof of Theorem~\ref{thm:lossintro}]
The action and the inclusion $\kk\subseteq\z$ are Proposition~\ref{prop:globalaction}. The tensor estimate is Proposition~\ref{prop:lossrank}. Choose an orthonormal basis of $\kk$. Since it is central, Theorem~\ref{thm:commuting} gives a positive definite Gram block $G_{\kk\kk}$ almost everywhere. Its vector fields $Y_\xi$ are zero, so
\[
 \DD_{\kk\kk}=G_{\kk\kk}>0.
\]
Thus $\dim\kk\leq\rank\DD\leq2n-r$. Independence of the same gradients gives $\dim\kk\leq r$.

If all critical gradients span $TZ$, a central $Y_\xi$ is orthogonal to each of them by \eqref{eq:DLie}, and hence is zero. This proves $\kk=\z$ under the stated hypothesis. The sharp examples for all $r$ are given in Section~\ref{sec:examples}.
\end{proof}

\begin{corollary}\label{cor:extremecenter}
For a volume-collapsing sequence with $\lambda_{n^2}\to1$, precisely one critical Lie algebra direction acts trivially on the entire limit spectral resolution. It is the center. The semisimple part has dimension $n^2-1$, and its auxiliary coadjoint orbit attains the maximal holomorphic isometry dimension in complex dimension $n-1$.
\end{corollary}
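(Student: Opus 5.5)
The plan is to combine \cref{cor:dimensionequality} with \cref{thm:lossintro}. For a volume-collapsing sequence with $\lambda_{n^2}\to1$, \cref{cor:dimensionequality} already gives $r=2n-1$, $q=n^2$, $\dim\z=1$, $\dim\s=n^2-1$, $\rank P=2(n-1)$ and $\rank G=2n-1$ almost everywhere. It therefore remains to show two things. First, $\kk$ is exactly the one-dimensional center. Second, the coadjoint orbit is extremal.

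For the inclusion $\kk\subseteq\z$ and the bound $\dim\kk\leq 1$, I will use \eqref{eq:lossintro}: $\kk\subseteq\z$, and $\dim\kk\leq\min\{r,2n-r\}=\min\{2n-1,1\}=1$. For the reverse inclusion, I would invoke the last assertion of \cref{thm:lossintro}: if the critical gradients span the tangent module, then $\kk=\z$. Since $\rank G=2n-1=r$ almost everywhere, the $q=n^2$ critical gradients span a $(2n-1)$-dimensional subspace of the $r$-dimensional tangent fiber. Hence they span the tangent module. This gives $\kk=\z$, which is one-dimensional. So precisely one direction, up to scale, acts trivially on every eigenspace, since $D_\xi=0$ on all of $\FF$ by definition of $\kk$. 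The step I expect to need most care is the spanning claim. The rank of $G$ equals the dimension of the pointwise span of the gradients, and the essential dimension is the almost-everywhere fiber dimension of the module. On each measurable piece where a basis exists, a spanning set of the right rank therefore generates the fiber, and I would make this rigorous through the local-basis decomposition already used in \cref{prop:lossrank}.

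For the orbit statement, I would rerun the argument proving \eqref{eq:semisimplebudget} with $m=n-1$, which holds since $\rank P=2(n-1)$. That argument embeds the isotropy infinitesimally in $\mathfrak u(n-1)$, so $\dim\s\leq 2(n-1)+(n-1)^2=n^2-1$. We have equality, since $\dim\s=n^2-1$. Hence the auxiliary coadjoint orbit has complex dimension $n-1$, and $S$ acts on it effectively up to finite kernel by holomorphic isometries of its canonical K\"ahler structure, with an isometry group of dimension $n^2-1=(n-1)^2+2(n-1)$. This is the maximal dimension of a holomorphic isometry group in complex dimension $n-1$, as in \cite[Proposition 3.1]{CWZ}, which gives the final assertion. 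The only subtlety is checking that the pointwise equality $\rank P=2(n-1)$ holds on a set of full measure, and this is already part of \cref{cor:dimensionequality}.
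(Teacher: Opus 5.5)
Your proposal is correct and follows the paper's proof essentially verbatim: Corollary~\ref{cor:dimensionequality} supplies $\dim\z=1$ and $\rank G=2n-1=r$. The spanning clause of Theorem~\ref{thm:lossintro} then gives $\kk=\z$, and the orbit assertion is the equality case of \eqref{eq:semisimplebudget} with $m=n-1$. (Your appeal to $\dim\kk\leq\min\{r,2n-r\}$ is redundant once $\kk\subseteq\z$ and $\dim\z=1$, but harmless.)
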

\begin{proof}
Corollary~\ref{cor:dimensionequality} gives a one-dimensional center and $\rank G=2n-1=r$, so Theorem~\ref{thm:lossintro} gives $\kk=\z$. The dimension assertion is the equality case of \eqref{eq:semisimplebudget}.
\end{proof}

\subsection{Exact detection on the spectrum}
The disappearing directions admit useful equivalent descriptions. Define
\[
 \mathcal V=\int_ZB\dd\mu,
 \qquad \mathcal D_{\mathrm{int}}=\int_Z\DD\dd\mu=2I-\mathcal V.
\]
Since the integrands are nonnegative,
\begin{equation}\label{eq:integratedloss}
 \kk=\ker\mathcal V=\ker(2I-\mathcal D_{\mathrm{int}}).
\end{equation}
In particular, nonzero partial loss does not imply disappearance. For $\xi\in\kk$,
\[
 \int_{M_j}|J_j\nabla u_{j,\xi}|^2\dd\mu_j\longrightarrow2|\xi|^2,
\]
although $X_{j,\xi}\phi_{j,\ell}\to0$ for every fixed $\ell$.

On each complexified eigenspace, the commuting skew-Hermitian operators $D_z$, $z\in\z$, have simultaneous weights $\alpha\in\z^*$. Let $\mathcal W$ be the collection of all such weights over the full spectrum. Then
\begin{equation}\label{eq:weights}
 \kk=\bigcap_{\alpha\in\mathcal W}\ker\alpha,
 \qquad
 \Gamma=\{z\in\z:\alpha(z)\in2\pi\Z\text{ for all }\alpha\in\mathcal W\}
\end{equation}
is the kernel of the central additive-group action. The weights on the critical space itself are zero, so higher spectral levels are needed to detect visible central directions. The image of the central additive-group action  need not be closed in its compact closure; consequently a real infinitesimal direction is not automatically a primitive integral circle direction.

For a quantitative finite-window test, define
\[
 \mathcal H_t(\xi,\eta)=\sum_{\ell\geq0}e^{-t\nu_\ell}
 \ip{D_\xi\phi_\ell}{D_\eta\phi_\ell}_{L^2},\qquad t>0.
\]
Its kernel is $\kk$. If $\mathcal H_{t,R}$ is the sum over $\nu_\ell\leq R$, then
\begin{equation}\label{eq:heattail}
 \norm{\mathcal H_t-\mathcal H_{t,R}}_\op
 \leq C_nt^{-n-1}e^{-tR/2},\qquad0<t\leq1.
\end{equation}
Indeed, \eqref{eq:Denergy} bounds a summand by $C_n|\xi|^2\nu_\ell e^{-t\nu_\ell}$. The normalized heat-kernel bound gives $\sum e^{-s\nu_\ell}\leq C_ns^{-n}$; splitting off $e^{-tR/2}$ proves \eqref{eq:heattail}. If $\kappa_t>0$ is the least eigenvalue of $\mathcal H_t$ on $\kk^\perp$, a cutoff for which the right-hand side is below $\kappa_t/2$ gives
\[
 \ker\mathcal H_{t,R}=\kk.
\]
The cutoff depends on the visibility gap $\kappa_t$, not just on the original dimension.

\section{High multiplicity and almost rigidity}

\subsection{A single coadjoint image and noncollapsing}
Assume $q>n^2$. By Corollary~\ref{cor:centerexclusion}, the algebra is compact semisimple and both $G$ and $P$ have rank $2n$ almost everywhere. Let $S$ act as in Proposition~\ref{prop:globalaction}. Define
\[
 U:Z\longrightarrow\g^*,\qquad U(x)(\xi)=u_\xi(x).
\]
It is Lipschitz and equivariant for the right coadjoint action
\[
 (\nu\cdot s)(\xi)=\nu(\Ad_s\xi).
\]

\begin{lemma}\label{lem:oneorbit}
There is a compact coadjoint orbit $\OO\subset\g^*$ of real dimension $2n$ such that
\[
 U(Z)=\OO,\qquad U(x\cdot S)=\OO\quad\text{for every }x\in Z.
\]
\end{lemma}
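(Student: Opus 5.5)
The plan is to relate the rank of the differential of $U$ along orbits to the rank of $P$, and then use connectedness together with a dimension count on coadjoint orbits.

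\emph{Step 1: orbit images are open in their coadjoint orbits.} By equivariance \eqref{eq:equivariance}, $U(x\cdot s)=U(x)\cdot s$. Hence $U(x\cdot S)$ is exactly the coadjoint orbit $\OO_x$ of $U(x)$. The infinitesimal coadjoint action at $U(x)$ is $\xi\mapsto U(x)([\xi,\cdot])=u_{[\xi,\cdot]}(x)$. Its rank is therefore the rank of the matrix $P(x)_{ab}=u_{[e_a,e_b]}(x)$, so
\[
 \dim\OO_x=\rank P(x).
\]
By Corollary~\ref{cor:centerexclusion}, this equals $2n$ for $\mu$-a.e.\ $x$.

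\emph{Step 2: extend the dimension count to every point.} $U$ is continuous, and the orbit dimension $x\mapsto\rank P(x)$ is lower semicontinuous. Since $\mu$ has full support, the set where the rank is $2n$ is dense. I also need an upper bound everywhere. Off a null set, $\rank_\C H\le n$ and Lemma~\ref{lem:rank}(i) give $\rank P\le 2n$. Because $P$ is continuous and rank is lower semicontinuous, this a.e.\ bound gives $\rank P(x)\le 2n$ at every $x$. So every orbit has dimension at most $2n$, and a generic orbit has dimension exactly $2n$.

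\emph{Step 3: show that all orbits in the image coincide.} Consider the map $x\mapsto\tr_{\g}(\mathrm{ad}^*\text{-invariants})$. More concretely, take the Casimir values $c_k(x)=I_k(U(x))$, where the $I_k$ are a generating set of invariant polynomials on $\s^*$. Since coadjoint orbits of a compact group are separated by invariant polynomials, it suffices to show that each $c_k$ is constant.

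Each $c_k$ is $S$-invariant and lies in $W^{1,2}$, being a polynomial in the bounded Lipschitz $u_a$. By the orbit-dimension count, its gradient is annihilated by all $Y_\xi$. Indeed,
\[
 \ip{Y_\xi}{\nabla c_k}=D_\xi c_k=0
\]
by invariance. The key point is that, almost everywhere, the $Y_\xi$ span the full $2n$-dimensional tangent module. Here $r\le 2n$ and $\rank\DD\le 2n-r$. On the other hand, the $Y_\xi$ satisfy $\ip{Y_a}{\nabla u_b}=D_au_b=P_{ab}$, which has rank $2n$. Hence $\rank B\ge 2n$, which forces $r=2n$ and shows that the $Y_\xi$ span $TZ$ almost everywhere. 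Therefore $\nabla c_k=0$ almost everywhere. By the Poincar\'e inequality on connected $Z$, each $c_k$ is constant. Consequently $U(Z)$ lies in a single orbit $\OO$, and by Step 1 $U(x\cdot S)=\OO$ for every $x$.

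\emph{Step 4: identify the orbit's dimension.} The common orbit has dimension $2n$ at a.e.\ point, hence $\dim\OO=2n$. Since $\OO=U(x\cdot S)$ is the image of the compact group $S$, it is compact.

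The main obstacle is Step 3, specifically justifying that the $Y_\xi$ span the tangent module almost everywhere. This amounts to deducing $\rank B=2n=r$ from $\rank P=2n$, so that $S$-invariant Sobolev functions are constant. This requires care with the Hilbert-module rank argument: the matrix $C_{a\alpha}=D_a\psi_\alpha$ must have rank at least $\rank P$ when the $\psi$ are chosen to include the critical functions.
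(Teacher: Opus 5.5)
Your proof is correct and follows the paper's structure. You pull back orbit-separating invariants on $\g^*$ by $U$, show that their gradients vanish, and conclude constancy from connectedness (Poincar\'e inequality plus continuity). Equivariance and $\rank P=2n$ then give $U(x\cdot S)=\OO$ and $\dim\OO=2n$; your Step~2 is not actually needed.

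The only variation is in how you kill the gradient. You show that the $Y_\xi$ span $TZ$ almost everywhere. For this, note that $\sum_a v_aY_a=0$ forces $\sum_a v_aP_{ab}=0$, so $\ker B\subseteq\ker P$ and $2n=\rank P\le\rank B\le r\le 2n$. This settles the worry in your last paragraph without any appeal to $C_{a\alpha}$. The paper instead combines $P\,dF=0$ with $\ker G=\ker P$ from Corollary~\ref{cor:centerexclusion}, directly in $|\nabla(F\circ U)|^2=(dF)^TG\,dF$. That route avoids both Lemma~\ref{lem:vectorrealization} and the bound $r\le 2n$.
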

\begin{proof}
For a smooth invariant function $F$ on $\g^*$, one has $P\,dF=0$ along the image. Since $\ker G=\ker P$ almost everywhere, the Sobolev chain rule gives
\[
 |\nabla(F\circ U)|^2=(dF)^TG(dF)=0.
\]
Thus $F\circ U$ is constant, first almost everywhere and then everywhere by continuity. Distinct compact orbits can be separated by smooth invariant functions: choose a smooth separating function and average it over $S$. Hence the entire image lies in one orbit. At a full-rank point its dimension is $\rank P=2n$. Equivariance and transitivity of the coadjoint action give surjectivity on each spatial orbit.
\end{proof}

\begin{proposition}\label{prop:noncolltrans}
The absolute Riemannian volumes converge to a positive number, and $S$ acts transitively on $Z$.
\end{proposition}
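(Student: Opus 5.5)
Assume $q>n^2$. The plan is to prove transitivity first and then obtain noncollapsing from the coadjoint image $\OO$ of Lemma~\ref{lem:oneorbit}.

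\emph{Step 1: $U$ is injective.} Suppose $U(x)=U(y)$. Then the orbits $x\cdot S$ and $y\cdot S$ have the same image $\OO$. I would show that these two orbits coincide. Then $U$, restricted to each orbit, factors through $S/\mathrm{Stab}$ onto $S/\mathrm{Stab}_{U(x)}$, which is a finite-dimensional homogeneous space. The key claim is that the spectral functions $u_\xi$ separate points of $Z$ modulo the action.

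To see this, take a function $f\in\FF$ in any eigenspace. By Proposition~\ref{prop:globalaction}, $f$ restricted to an orbit is determined by $f(x)$ together with the representation on the eigenspace of $f$. The question is whether $f$ is constant on the fibres of $U$.

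I would argue this through the tensor $\DD$. Since $\rank G=2n\ge r$ and $\rank\DD\le 2n-r$, the identity $B=G-\DD$ gives $\rank B\ge r+\rank G-2n$. More directly, Corollary~\ref{cor:centerexclusion} shows that the critical gradients have rank $2n$. But they also have rank at most $r$, since $r$ is the essential dimension. Hence $r=2n$ and $\DD=0$, so $B=G$.

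It follows that the fields $Y_\xi$ span the tangent module almost everywhere. Consequently every $f$ with $D_\xi f=0$ for all $\xi$ has $\nabla f=0$ and is constant. So $S$-invariant $L^2$ functions are constant, which means the action is ergodic. Since the orbits are compact, being images of the compact group $S$, an ergodic continuous action with all orbits closed has a single orbit up to measure zero. Because $\mu$ has full support and orbits are closed, $Z$ is then one orbit: transitivity. To separate distinct closed orbits, I would use an $S$-averaged Urysohn function; it is invariant and nonconstant, which contradicts ergodicity.

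\emph{Step 2: identify $Z$ and get volume.} Now $Z=S/K$ is a homogeneous compact metric space with $\RCD(1,2n)$ and essential dimension $2n$. Hence its Hausdorff dimension is $2n$, and the limit is noncollapsed. The surjection $U:Z\to\OO$ is then an equivariant map of homogeneous spaces of the same dimension $2n$.

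I would invoke volume convergence for noncollapsed limits (Colding, Cheeger--Colding / De~Philippis--Gigli). The step I expect to be delicate is excluding collapse with $\dim_\HH Z=2n$ not following merely from essential dimension. I would try the following argument. If the volumes $\Vol_{g_j}\to0$, then \cite[Theorem~1.2]{DPG} forces $\dim_\HH Z\le 2n-1$, and then $r\le 2n-1$. This contradicts $r=\rank G=2n$.

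\emph{Step 3: conclude.} By the contradiction in Step~2, the volumes stay bounded below along any subsequence. Colding's volume convergence then gives convergence of the absolute volumes to $\mathcal H^{2n}(Z)>0$. The main obstacle is the ergodicity-to-transitivity step, specifically justifying that $B=G$ forces invariant Sobolev functions to be constant. This uses $D_\xi f=\ip{Y_\xi}{\nabla f}$ from Lemma~\ref{lem:vectorrealization} together with the spanning property of the $Y_\xi$.
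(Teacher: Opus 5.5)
Your argument is correct, but it takes a different route from the paper's.

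The paper works through the coadjoint image. By Lemma~\ref{lem:oneorbit}, the map $U$, with Lipschitz constant $L$, sends every orbit $x\cdot S$ onto the $2n$-dimensional orbit $\OO$. This gives the uniform bound $\HH^{2n}(x\cdot S)\geq L^{-2n}\HH^{2n}(\OO)$. Combined with $\HH^{2n}(Z)=\lim_j\Vol_{g_j}(M_j)<\infty$ from \cite{DPG}, this single estimate shows that the limit volume is positive and that there are only finitely many orbits. Each orbit is then open and closed, and connectedness gives transitivity.

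You instead argue as follows:
\begin{itemize}
\item $\rank G=2n$ (Corollary~\ref{cor:centerexclusion}) forces $r=2n$.
\item Proposition~\ref{prop:lossrank} then gives $\DD=0$, so $B=G$ and the $Y_\xi$ span the tangent module.
\item Hence every $S$-invariant Lipschitz $F$ satisfies $D_\xi F=\ip{Y_\xi}{\nabla F}=0$ and is constant.
\item The function $d(\cdot,x\cdot S)$ is already invariant, since the action is isometric, so no averaging is needed; it therefore forces a single orbit.
\item Positivity of the volume follows separately from the collapse dimension gap together with $r\leq\dim_{\HH}Z$.
\end{itemize}

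What each route buys: yours bypasses Lemma~\ref{lem:oneorbit} and the orbit counting. It decouples the volume statement from transitivity, since positivity needs only $r=2n$. It also yields $B=G$ as a byproduct, which anticipates \eqref{eq:Jreconstruct}. The paper's route needs only the coadjoint image and one measure comparison. It also gives a lower bound on $\HH^{2n}(Z)$ in terms of $\HH^{2n}(\OO)$ and $L$.

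Two presentational points. First, drop the opening claims that $U$ is injective and that the $u_\xi$ separate points modulo the action; you never prove them and the argument does not use them. Second, the step from $S$-invariance to $D_\xi F=0$ uses that $W^{1,2}$ lies in the domain of the generators, by \eqref{eq:Denergy}. That is why you should work with Lipschitz $F$. Alternatively, for genuine $L^2$ ergodicity, first smooth an invariant $L^2$ function by $e^{-t\LL}$, which commutes with $T_g$.
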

\begin{proof}
Give $\g^*$ an invariant Euclidean metric, and let $L$ be a Lipschitz constant for $U$. Every compact orbit $x\cdot S$ maps onto $\OO$, so
\begin{equation}\label{eq:orbitmeasure}
 \HH^{2n}(x\cdot S)\geq L^{-2n}\HH^{2n}(\OO)>0.
\end{equation}
Choose a radius larger than the common diameter bound. Each entire space is a closed ball of that radius. The Hausdorff-measure continuity theorem \cite[Theorem 1.3, version 3]{DPG} states that $\HH^{2n}$ is finite and continuous on this GH family of balls in $\RCD(1,2n)$ spaces. With Euclidean normalization of Hausdorff measure, it gives
\begin{equation}\label{eq:volumeconv}
 \HH^{2n}(Z)=\lim_j\Vol_{g_j}(M_j)<\infty.
\end{equation}
This application does not require a volume lower bound in advance. Equation~\eqref{eq:orbitmeasure} makes the limit strictly positive.

The lower bound in \eqref{eq:orbitmeasure} is the same for every orbit in this fixed limit. Since distinct orbits are disjoint compact measurable sets and the total measure is finite, there are only finitely many. Each is then open as well as closed. Connectedness gives a single orbit.
\end{proof}

\begin{corollary}\label{cor:homogeneous}
The limit is a smooth compact homogeneous Riemannian manifold $(Z,g_\infty)$ of real dimension $2n$. Its reference measure is normalized Riemannian volume and $\Ric_{g_\infty}\geq g_\infty$.
\end{corollary}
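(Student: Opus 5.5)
The plan is to use the transitive isometric action of the compact Lie group $S$ from Proposition~\ref{prop:noncolltrans}. We first show that $Z$ is a homogeneous metric space of the form $S/K$. Then we upgrade the metric to a smooth Riemannian one. Finally we transport the curvature and measure information.

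\textbf{Step 1: quotient structure.} Fix $x_0\in Z$ and let $K$ be its isotropy group. By Proposition~\ref{prop:globalaction} the action is continuous and isometric, and $S$ is compact, so $K$ is a closed subgroup. The orbit map $S/K\to Z$ is a continuous bijection from a compact space onto a Hausdorff space, hence a homeomorphism. Therefore $Z$ is a compact smooth manifold $S/K$, and its dimension equals $\dim S-\dim K$.

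\textbf{Step 2: smooth invariant Riemannian metric.} The distance $d$ is $S$-invariant, and $Z$ is an $\RCD$ space, so it is a geodesic space. Invariant geodesic distances on a compact homogeneous space are invariant sub-Finsler or Finsler metrics. We exclude the degenerate cases with the $\RCD$ structure. Infinitesimal Hilbertianity gives tangent cones that are Euclidean almost everywhere. By homogeneity they are Euclidean everywhere, which forces the metric to be induced by an $\Ad_K$-invariant inner product on $\s/\kk$. This is the step I expect to be the main obstacle. I would first try a Berestovskii-type theorem: a locally compact, finite-dimensional, homogeneous geodesic space with curvature bounded below in the $\RCD$ sense is a Riemannian homogeneous manifold. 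Alternatively, I would argue directly that $U:Z\to\OO$ is an equivariant map of homogeneous spaces $S/K\to S/K'$ with $K\subseteq K'$.

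\textbf{Step 3: dimension and measure.} Lemma~\ref{lem:oneorbit} gives the surjection $U$ onto the $2n$-dimensional orbit $\OO$, so $\dim Z\geq2n$. The bound $\dim_{\HH}Z\leq2n$ holds for $\RCD(1,2n)$ spaces, so $\dim Z=2n$. Then $U$ is a covering map, and in fact a diffeomorphism after accounting for connected stabilizers. The measure $\mu$ is $S$-invariant by Proposition~\ref{prop:globalaction}. Invariant probability measures on $S/K$ are unique, so $\mu$ is the normalized Riemannian volume. This is consistent with \eqref{eq:volumeconv}.

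\textbf{Step 4: Ricci bound.} For a smooth Riemannian manifold carrying its volume measure, the $\RCD(1,2n)$ condition is equivalent to $\Ric_{g_\infty}\geq g_\infty$. Stability of the curvature-dimension condition under the convergence of Proposition~\ref{prop:compactness} gives $\RCD(1,2n)$ for the limit. This completes the proof.
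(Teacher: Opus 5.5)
Your proposal is correct, but it reaches the key smoothness step by a different route from the paper.

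The paper does not reconstruct the Riemannian structure by hand. It observes that a transitive measure-preserving isometric action makes $(Z,d,\mu)$ locally metric-measure homogeneous. It then applies a structure theorem for such $\RCD(K,N)$ spaces, which gives at once a smooth Riemannian metric and the fact that $\mu$ is a constant multiple of its volume. The dimension $2n$ follows from $0<\HH^{2n}(Z)<\infty$ (Proposition~\ref{prop:noncolltrans}), normalization fixes the constant, and the Ricci bound is read off exactly as in your Step~4.

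You instead use the global compact group:
\begin{itemize}
\item $Z\cong S/K$;
\item Berestovskii's theorem that an invariant intrinsic metric on $S/K$ is Carnot--Carath\'eodory--Finsler;
\item tangent cones to rule out the non-Riemannian cases;
\item uniqueness of the $S$-invariant probability measure on $S/K$ for the measure;
\item $U$ together with $\dim_\HH Z\leq2n$ for the dimension.
\end{itemize}
This trades one specialized citation for classical ingredients and makes the measure identification elementary. The cost is that you rely on global transitivity of a compact Lie group, whereas the paper's citation needs only local homogeneity.

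Two points in Step~2 should be stated precisely.
\begin{itemize}
\item The almost-everywhere Euclidean tangent cones come from the $\RCD(K,N)$ structure theory, not from infinitesimal Hilbertianity alone. A sub-Riemannian Heisenberg group with Haar measure is infinitesimally Hilbertian but has non-Euclidean tangent cones.
\item Passing from Euclidean cones to a Riemannian metric needs the tangent-cone description of homogeneous Carnot--Carath\'eodory--Finsler spaces. A proper bracket-generating distribution gives a non-abelian Carnot cone, whose Hausdorff dimension exceeds its topological dimension. For the full tangent bundle the cone is $(T_xZ,F_x)$, which is Euclidean only if $F_x$ is an inner-product norm.
\end{itemize}

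Your alternative via $U$ would not suffice on its own, since $d$ is not a priori tied to any metric on $\OO$. Your claim that $U$ is a diffeomorphism is true, because the coadjoint stabilizer is connected and contains $K$ with equal dimension. It is not needed for this corollary.
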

\begin{proof}
A transitive measure-preserving isometric action gives local metric measure homogeneity. Apply \cite[Theorem 1.2, version 2]{HN}. It yields a smooth Riemannian metric and a constant multiple of its volume measure. The positive finite $\HH^{2n}$ measure forces dimension $2n$, and probability normalization fixes the constant. On this unweighted smooth manifold the RCD bound is the usual Ricci bound.
\end{proof}

\subsection{Reconstruction of the K\"ahler structure}
The isometric action on the smooth limit is smooth by Myers--Steenrod \cite{MS}. The critical eigenfunctions are smooth by elliptic regularity. Write $Y_a$ for the fundamental fields.

\begin{proposition}\label{prop:kahlerreconstruction}
There is a smooth $S$-invariant integrable complex structure $J_\infty$ such that $g_\infty$ is K\"ahler and
\begin{equation}\label{eq:Jreconstruct}
 J_\infty\nabla u_a=Y_a.
\end{equation}
Its K\"ahler form is $U^*\omega_{\KKS}$. Moreover,
\[
 \Ric(\omega_\infty)=\omega_\infty,\qquad -\Delta_{\omega_\infty}u_a=u_a.
\]
\end{proposition}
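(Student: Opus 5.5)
The plan is to build $J_\infty$ pointwise out of the Gram data $G$ and $P$, identify its K\"ahler form with $U^*\omega_{\KKS}$, prove integrability by passing the bracket identity \eqref{eq:gradientbracket} to the limit, and finally read off the Ricci identity from the moment-map structure.

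\emph{Step 1: the pointwise structure.} On the smooth homogeneous limit of Corollary~\ref{cor:homogeneous}, $G$, $P$ and $B$ are continuous and $S$-equivariant. The almost-everywhere statements of Corollary~\ref{cor:centerexclusion} therefore hold everywhere:
\[
 \rank G=\rank P=2n,\qquad \ker G=\ker P.
\]
Since $r=2n$, Proposition~\ref{prop:lossrank} gives $\rank\DD\leq0$, so $B=G$. By \eqref{eq:DLie} and Lemma~\ref{lem:vectorrealization},
\[
 \ip{Y_a}{\nabla u_b}=D_au_b=u_{[e_a,e_b]}=P_{ab}.
\]
Because the $\nabla u_b$ span $T_xZ$, any relation $\sum c_a\nabla u_a=0$ has $c\in\ker G=\ker P$. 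The pairing then forces $\sum c_aY_a=0$. Hence $J_\infty\nabla u_a:=Y_a$ is a well-defined linear endomorphism. In the frame $\nabla u_a$, $J_\infty$ is represented by $P$ relative to the inner product $G$. Lemma~\ref{lem:rank}(iii) then gives $J_\infty^2=-1$ and $g_\infty$-orthogonality; the identity $B=G$ gives the same conclusion directly. $S$-invariance follows from \eqref{eq:equivariance}, since the action maps $\nabla u_\eta$ to $\nabla u_{\Ad\eta}$ and $Y_\eta$ to $Y_{\Ad\eta}$.

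\emph{Step 2: the K\"ahler form.} By equivariance, $dU(Y_\xi)$ is the fundamental field of $\xi$ on $\OO$. Consequently
\[
 U^*\omega_{\KKS}(Y_a,Y_b)=U(x)([e_a,e_b])=P_{ab}.
\]
Up to the sign fixed by \eqref{eq:conventions}, this equals $\omega_\infty(Y_a,Y_b)=g_\infty(J_\infty Y_a,Y_b)$: by orthogonality, $g_\infty(J_\infty Y_a,Y_b)=-g_\infty(\nabla u_a,Y_b)$, and the right-hand side is $\pm P_{ab}$. Since the $Y_a$ also span, we get $\omega_\infty=U^*\omega_{\KKS}$, and this form is closed. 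So $(g_\infty,J_\infty,\omega_\infty)$ is almost K\"ahler. Moreover $U$ is a local diffeomorphism onto $\OO$ by rank $2n$.

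\emph{Step 3: integrability, the main obstacle.} $S$-invariance gives $\Lie_{Y_a}J_\infty=0$. Using tensoriality of the Nijenhuis tensor and $[Y_a,J_\infty Y_b]=J_\infty[Y_a,Y_b]$, one computes
\[
 N(Y_a,Y_b)=-[Y_a,Y_b]-[\nabla u_a,\nabla u_b].
\]
It therefore suffices to prove
\[
 [\nabla u_a,\nabla u_b]+[J_\infty\nabla u_a,J_\infty\nabla u_b]=0.
\]
I would obtain this from \eqref{eq:gradientbracket} on $M_j$, whose right-hand side $2A^-_{h}\nabla f-2A^-_f\nabla h$ tends to zero in $L^2$ by Lemma~\ref{lem:bochner}. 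Each bracket term is a difference of Hessian contractions. Pairing with strongly converging test gradients, weak Hessian stability (Proposition~\ref{prop:compactness}) passes the $\nabla u_{j,a}$ terms to the limit. The $J_j\nabla u_{j,a}$ terms converge through the definition \eqref{eq:Ddefinition} of $D_a$, now realized by $Y_a$. The delicate point is the derivative of $X_{j,a}$ itself; I would handle it by writing $[X_a,X_b]=J\nabla p(u_a,u_b)$ and using \eqref{eq:remainder} together with \eqref{eq:DLie}. This gives the identity weakly, hence pointwise by smoothness. Then $N=0$, and Newlander--Nirenberg together with $d\omega_\infty=0$ make $g_\infty$ K\"ahler.

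\emph{Step 4: eigenvalue and Ricci identities.} The reference measure is normalized Riemannian volume, so $\LL u_a=2u_a$ becomes $-\Delta_{\omega_\infty}u_a=u_a$ under $\Delta_{\R}=2\Delta$. Each $Y_a$ is a Killing, hence real holomorphic, Hamiltonian field with $\iota_{Y_a}\omega_\infty=-du_a$. The standard identity for holomorphy potentials gives
\[
 \iota_{Y_a}\bigl(\Ric(\omega_\infty)-\omega_\infty\bigr)=\pm d(\Delta u_a+u_a)=0.
\]
Since the $Y_a$ span $TZ$ pointwise, $\Ric(\omega_\infty)=\omega_\infty$.
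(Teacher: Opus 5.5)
Your argument is correct. Steps 1--2 essentially reproduce the paper; your extra remark that $r=2n$ forces $\DD=0$, hence $B=G$, is a legitimate shortcut to orthogonality. Steps 3 and 4 take genuinely different routes.

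\textbf{Integrability.} The paper never returns to the approximating manifolds. It observes that the stabilizers of $x$ and $U(x)$ have the same Lie algebra, places $U(x)$ in a maximal torus, and checks that compatibility with $U^*\omega_{\KKS}$ forces $J_\infty$ to be the canonical invariant complex structure of the coadjoint orbit. The $(1,0)$ roots of that structure, together with the isotropy, form a complex subalgebra.

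You instead pass \eqref{eq:gradientbracket} to the limit as in the proof of Theorem~\ref{thm:commuting}, replacing the commuting hypothesis by $[X_{j,a},X_{j,b}]=\sum_c c_{ab}^{\ c}(j)X_{j,c}+J_j\nabla r_{j,ab}$. Weak Hessian stability, \eqref{eq:Ddefinition} and \eqref{eq:remainder} then give $[\nabla u_a,\nabla u_b]+Y_{[e_a,e_b]}=0$ almost everywhere. Equation~\eqref{eq:DLie} converts $Y_{[e_a,e_b]}$ into $[Y_a,Y_b]$, which by your Nijenhuis computation is exactly what is needed. This avoids the root-system combinatorics and shows that integrability is inherited from the $M_j$. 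The cost is reliance on weak Hessian stability and on identifying the RCD Hessian with the Riemannian one on the unweighted smooth limit. The paper's route is purely finite-dimensional and also identifies $J_\infty$ as the flag-manifold structure.

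\textbf{Ricci identity.} The paper uses equality in Lemma~\ref{lem:bochner} together with $\Ric_{g_\infty}\geq g_\infty$. Your moment-map identity instead uses the holomorphy of the $Y_a$ and needs no curvature bound at all.

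\textbf{Signs.} Two signs you leave as $\pm$ are in fact determined and come out right. First, $\omega_\infty(Y_a,Y_b)=-\ip{\nabla u_a}{Y_b}=-P_{ba}=P_{ab}$. Second, for real holomorphic $X$ with $\iota_X\omega=-du$ one has $\iota_X\Ric(\omega)=d\Delta u$, which follows from $\partial_{\bar k}\nabla_j\xi^j=-R_{j\bar k}\xi^j$. So the combination is $d(\Delta u_a+u_a)$, as you need.
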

\begin{proof}
The identities
\begin{equation}\label{eq:YP}
 \ip{Y_a}{\nabla u_b}=D_au_b=P_{ab}
\end{equation}
hold everywhere. Positivity and the complex-rank bound extend from a full-measure set by smoothness. The rank of $P$ is the dimension of the fixed coadjoint orbit and equals $2n$ everywhere. Lemma~\ref{lem:rank} therefore gives full rank for $G$ and equality of the kernels everywhere.

If $\sum t_a\nabla u_a=0$, then $Gt=0$ and hence $Pt=0$. By \eqref{eq:YP}, $\sum t_aY_a$ is orthogonal to all the gradients and is zero. Thus \eqref{eq:Jreconstruct} is well-defined. A local independent family of gradients shows that it is smooth. The skew symmetry of $P$ makes it skew-adjoint, and Lemma~\ref{lem:rank}(iii) gives
\[
 J_\infty^2=-I,\qquad
 g_\infty(J_\infty V,J_\infty W)=g_\infty(V,W).
\]
Equivariance makes this structure $S$-invariant.

For the right coadjoint convention, choose the KKS form characterized by
\[
 \omega_{\KKS}|_\nu(\xi^\#,\eta^\#)=\nu([\xi,\eta]).
\]
It is closed \cite{Rieffel}. Equivariance gives $U^*\omega_{\KKS}(Y_a,Y_b)=P_{ab}$. The form $\omega_\infty(V,W)=g_\infty(J_\infty V,W)$ has the same values, by \eqref{eq:YP}. The fundamental fields span every tangent space, so
\begin{equation}\label{eq:KKSpullback}
 \omega_\infty=U^*\omega_{\KKS},\qquad d\omega_\infty=0.
\end{equation}
The map $U$ is a local diffeomorphism because its differential is onto along the group action and source and target both have dimension $2n$.

Closedness alone would only prove an almost K\"ahler statement. To verify integrability, let $H_x\subset K_\nu$ be the stabilizers of $x$ and $\nu=U(x)$. Their Lie algebras agree. Identify $\nu$ with an element of $\g$ using the invariant inner product and choose a maximal torus containing it. This torus lies in $K_\nu^0=H_x^0$. The complexified tangent representation is the sum of the root spaces with nonzero value on $\nu$. Torus invariance preserves each real root plane. Compatibility with the fixed KKS form and positivity select one of the two invariant complex structures on that plane. These are the canonical choices for the coadjoint orbit. Equivalently, up to the common right-action sign, the $(1,0)$ roots are those with positive value on $\nu$. Their sums, when roots, have positive value, and brackets with zero-value isotropy roots preserve the choice. Thus the sum of isotropy and $(1,0)$ spaces is a complex subalgebra. The invariant $(1,0)$ distribution is involutive, proving integrability.

We now have a smooth K\"ahler metric with $\Ric\geq\omega_\infty$ and complex eigenvalue one for all $u_a$. Apply Lemma~\ref{lem:bochner} at equality. The nonnegative Ricci defect vanishes in every critical gradient direction. These gradients span the tangent space, so the defect is zero.
\end{proof}

\begin{corollary}\label{cor:highmodels}
If $q\geq n^2+3$, the limit is biholomorphically isometric to $(\CP^n,\omega_n)$. If $q>n^2$, the other possible limits are the normalized product $\CP^{n-1}\times\CP^1$ and, when $n=3$, the smooth quadric in $\CP^4$ with its canonical K\"ahler--Einstein metric.
\end{corollary}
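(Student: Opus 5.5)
By Proposition~\ref{prop:kahlerreconstruction}, the limit for $q>n^2$ is a compact homogeneous K\"ahler--Einstein manifold with $\Ric(\omega_\infty)=\omega_\infty$. It is realized as a $2n$-dimensional coadjoint orbit of a compact semisimple $\s$ with $\dim\s=q$, and the critical eigenfunctions span a space of dimension at least $q$. The plan is therefore to classify compact homogeneous K\"ahler--Einstein manifolds, in the generalized flag manifold form $S/K$, whose isometry algebra contains a semisimple algebra of dimension $q>n^2$ acting with the stated rank. The case $q\geq n^2+3$ should then follow directly from the exact rigidity theorem \cite[Theorem~1.6]{CWZ}: the limit is a smooth K\"ahler manifold with $\Ric\geq\omega_\infty$ and $\lambda_{n^2+3}=1$, so it is biholomorphically isometric to $(\CP^n,\omega_n)$.

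For the remaining range $n^2<q\leq n^2+2$, use the equality-case analysis in the proof of Theorem~\ref{thm:dimensionintro}. There the isotropy algebra of the orbit embeds in $\mathfrak u(n)$, and $\dim\s=2n+\dim\kk_x$, so $\dim\kk_x=q-2n>n^2-2n$. Write the orbit as a product of irreducible flag manifolds $S_i/K_i$ of complex dimensions $n_i$; the algebra then splits as $\s=\bigoplus\s_i$ with $\dim\s_i\leq n_i^2+2n_i$. Equality holds only for $\CP^{n_i}$ with $\mathfrak{su}(n_i+1)$. This is the Chu--Wang--Zhang estimate \cite[Proposition~3.1]{CWZ}, which I will assume, together with the fact that the maximum is attained only by projective space.

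For a product, $\sum(n_i^2+2n_i)>n^2$ with $\sum n_i=n$ forces exactly two factors, one of them $\CP^1$ and the other $\CP^{n-1}$. The dimension count is $(n-1)^2+2(n-1)+3=n^2+2$. Any further splitting loses at least $2n_in_j-2>0$ and drops below $n^2+1$ once $n\geq2$. The Einstein normalization then fixes the product metric as the normalized product.

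For an irreducible orbit that is not $\CP^n$, I will need the dimension of its isometry algebra to exceed $n^2$. Running through the list of Hermitian symmetric and flag manifolds of complex dimension $n$ and comparing $\dim S$ with $n^2$, the only candidate is the quadric $Q_3=SO(5)/SO(2)\times SO(3)$. It has $\dim\mathfrak{so}(5)=10=n^2+1$ with $n=3$. The check is that Grassmannians $G(k,N)$ with $2\leq k\leq N-2$ have $N^2-1$ versus $(k(N-k))^2$, and the general quadrics $Q_n$ have $(n+2)(n+1)/2$ versus $n^2$. This leaves exactly $Q_3$ (since $Q_2=\CP^1\times\CP^1$ is reducible), and the exceptional and non-symmetric flag manifolds are much too small. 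The K\"ahler--Einstein metric is unique up to scale on each model, and the normalization $\Ric=\omega$ fixes it.

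The main obstacle is making this classification step rigorous rather than a case check. The cleanest route is probably to combine the stabilizer embedding $\kk_x\hookrightarrow\mathfrak u(n)$ with the requirement that $\kk_x$ contain a maximal torus of $\s$. This gives $\rank\s\leq n$ together with $\dim\s>n^2$. Among simple algebras of rank at most $n$, only $\mathfrak{su}(n+1)$ and, for $n=3$, $\mathfrak{so}(5)$ have dimension exceeding $n^2$ and admit a $2n$-dimensional coadjoint orbit. Those orbits then have to be identified explicitly, which I will do by checking the finitely many small cases by hand.
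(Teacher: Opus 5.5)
Your treatment of $q\geq n^2+3$ is the paper's: Proposition~\ref{prop:kahlerreconstruction} gives a smooth K\"ahler limit with $\Ric\geq\omega_\infty$ and $\lambda_{n^2+3}=1$, and \cite[Theorem~1.6]{CWZ} finishes the argument.

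For $n^2<q\leq n^2+2$ your route is genuinely different. The paper just invokes \cite[Theorem~7.1]{CWZ}, a classification valid for any smooth compact K\"ahler manifold with $\Ric\geq\omega$ and that multiplicity, with no use of homogeneity. You instead use what Sections~4--5 already provide: $\z=0$, $\dim\s=q$, and $Z$ an $S$-coadjoint orbit with isotropy in $\mathfrak u(n)$. This reduces the problem to elementary Lie theory. You split the orbit along the simple ideals, bound each factor by $n_i^2+2n_i$, and compare $\dim\s_i$ with the minimal complex dimension of a nontrivial coadjoint orbit of each simple type. These minima are $r$ for $A_r$, $2r-1$ for $B_r,C_r$ ($r\geq2$), $2r-2$ for $D_r$ ($r\geq4$), and $5,15,16,27,57$ for $G_2,F_4,E_6,E_7,E_8$.

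This route is valid and more self-contained. It even recovers the case $q\geq n^2+3$ without \cite[Theorem~1.6]{CWZ}, since only $\mathfrak{su}(n+1)$ acting on $\CP^n$ has dimension above $n^2+2$. The citation avoids the case analysis altogether. Note that the rank bound $\rank\s\leq n$ is not the decisive input, since $\mathfrak{so}(2n+1)$ passes it with dimension $2n^2+n>n^2$. What does the work is the list of minimal orbit dimensions, and you should state it.

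Several points need repair.
\begin{enumerate}
\item Your claim that $Q_3$ is the only non-projective candidate, and that non-symmetric flag manifolds are too small, is false. The algebra $\mathfrak{so}(5)\simeq\mathfrak{sp}(2)$ has a second six-dimensional orbit, $SO(5)/U(2)\simeq Sp(2)/(U(1)\times Sp(1))\simeq\CP^3$, and $10>9$. The omission is harmless: its invariant K\"ahler--Einstein structure is $(\CP^3,\omega_3)$, which is on the list. In fact this case cannot occur, because then $q=15\neq10=\dim\s$. Still, your explicit identification step must include it.
\item In the two-factor case $(n_1,n_2)=(1,n-1)$ with $q=n^2+1$, the count fixes only the complex dimensions. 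You must still exclude a simple $\s_2$ of dimension $(n-1)^2+2(n-1)-1$ with an orbit of complex dimension $n-1$. This follows from your irreducible list applied in dimension $n-1$.
\item Each simple ideal must have a positive-dimensional orbit factor, which your count needs for $n_i\geq1$. Otherwise $u_\xi\equiv0$ for every $\xi$ in that ideal, which is impossible.
\item To identify $Z$ itself with the orbit, and not merely a cover of it, note that $U$ is an equivariant local diffeomorphism $S/H_x\to S/K_\nu$. Since $K_\nu$ is connected, this gives $H_x=K_\nu$.
\end{enumerate}
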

\begin{proof}
Apply \cite[Theorem 1.6]{CWZ} after Proposition~\ref{prop:kahlerreconstruction}. The two remaining multiplicities are classified by \cite[Theorem 7.1]{CWZ}. The exact rigidity results are used only at this smooth K\"ahler stage.
\end{proof}

\subsection{The almost-rigidity theorems}
The preceding arguments prove a sequential statement: if $\lambda_{n^2+1}(\omega_j)\to1$, every subsequence has a further subsequence converging to a smooth homogeneous K\"ahler--Einstein limit, with convergence of absolute volumes. This immediately gives the noncollapsing assertion in Theorem~\ref{thm:noncollapseintro}. Otherwise one could arrange $\lambda_{n^2+1}\leq1+j^{-1}$ and $\Vol(M_j)<j^{-1}$, contradicting \eqref{eq:volumeconv}. Sharpness is established in Section~\ref{sec:examples}.

For the biholomorphic conclusion we use a separate theorem; it does not follow just from GH closeness.
\begin{lemma}\label{lem:biholomorphicgap}
Suppose $\Ric(\omega_j)\geq\omega_j$ and $(M_j,g_j)$ converges to $(\CP^n,g_n)$ in GH topology with convergence of absolute volumes. Then $M_j$ is biholomorphic to $\CP^n$ for all sufficiently large $j$.
\end{lemma}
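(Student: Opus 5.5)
Under the hypotheses, the sequence is volume-noncollapsed and converges to a smooth manifold. The plan is to pass through diffeomorphism and a topological characterization of $\CP^n$ among Fano manifolds. First, since $\Ric(\omega_j)\geq\omega_j>0$, each $M_j$ is Fano: the class $c_1(M_j)$ is represented by a positive form, so $M_j$ is projective, simply connected, with $H^{p,0}=0$ for $p>0$.

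Next I would get topological control from the convergence. Volume convergence to the smooth limit $(\CP^n,g_n)$ together with Colding's volume convergence and Cheeger--Colding theory gives almost-maximal volume ratios at a fixed small scale. The Cheeger--Colding Reifenberg theorem then yields that $M_j$ is diffeomorphic to $\CP^n$ for large $j$. (The smooth limit has all tangent cones equal to $\R^{2n}$, so the local Reifenberg condition holds uniformly.) Hence $b_2(M_j)=1$ and $H^*(M_j;\Z)\cong\Z[h]/(h^{n+1})$. Also $c_1(M_j)=k_j h$ for some positive integer $k_j$, after choosing the positive generator using ampleness of $K^{-1}$.

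The step I expect to be the main obstacle is pinning down the integer $k_j$. The sign of the generator can be fixed by positivity of $\omega_j$, but the value needs a separate argument. For $k_j$ I would use the volume: $\Vol_{g_j}(M_j)=\int\omega_j^n/n!$. Since $[\omega_j]\leq c_1$ need not hold, I would instead use the volume comparison bound $\Vol\leq\Vol(\CP^n,g_n)$, with equality in the limit. Combined with a Bishop-type bound relating $c_1^n$ to volume, this is not immediate without the anticanonical class. The safer route is an index bound: by Kobayashi--Ochiai, $k_j\leq n+1$, with equality iff $M_j\cong\CP^n$. Equivalently, a Fano manifold homotopy equivalent to $\CP^n$ with $c_1=(n+1)h$ is $\CP^n$ by Hirzebruch--Kodaira and Yau.

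It therefore suffices to show $c_1(M_j)=(n+1)h$. I would get this from smooth topology. The Pontryagin classes are diffeomorphism invariants, so $p_1(M_j)=(n+1)h^2$. Also $c_1^2-2c_2=p_1$. Together with the Todd genus $\chi(\OO_{M_j})=1$ (from the Fano vanishing) and Riemann--Roch applied to $K^{-1}$, this constrains $k_j$. Alternatively, and more robustly, I would pass the first Chern class to the limit. Its Chern--Weil representative $\Ric(\omega_j)/2\pi$ is controlled weakly: the $\Ric\geq\omega_j$ bound and convergence of the smooth structures under the Reifenberg diffeomorphisms make $\int\Ric(\omega_j)\wedge\omega_j^{n-1}$ comparable to the corresponding integral for $(\CP^n,\omega_n)$. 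That integral determines $k_j\cdot\int h\wedge[\omega_j]^{n-1}$, and integrality together with the volume convergence then forces $k_j=n+1$ for large $j$. If this degree computation fails, the fallback is the Kobayashi--Ochiai/Hirzebruch--Kodaira characterization with Pontryagin-class constraints. For $n$ even, the sign of $h$ and the ampleness of $c_1$ force $c_1=(n+1)h$ on a manifold diffeomorphic to $\CP^n$; odd $n$ is handled by Libgober--Wood type results on complex structures homotopic to $\CP^n$ satisfying the Todd genus constraint.
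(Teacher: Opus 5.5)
\textbf{Verdict: the strategy works and differs from the paper's, but the decisive step $k_j=n+1$ is never established.} Your overall plan is to use Cheeger--Colding diffeomorphism stability to get $b_2(M_j)=1$ and $c_1(M_j)=k_jh$, and then characterize $\CP^n$.

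The reason you give for abandoning the volume route is false. $\Ric(\omega_j)\geq\omega_j$ says exactly that $2\pi c_1(M_j)-[\omega_j]$ is represented by the semipositive form $\Ric(\omega_j)-\omega_j$. So $[\omega_j]\leq 2\pi c_1(M_j)$ always holds. Separately, $\Vol\leq\Vol(\CP^n,g_n)$ is not a Bishop-type bound, since Bishop--Gromov only compares with a round sphere. It is Zhang's theorem, the same deep input the paper uses.

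The replacements you offer do not close the argument:
\begin{itemize}
\item The Todd-genus route uses only $p_1$, and you only say it ``constrains'' $k_j$.
\item The Chern--Weil route asserts a two-sided comparison of $\int\Ric(\omega_j)\wedge\omega_j^{n-1}$ with the model via Reifenberg diffeomorphisms. Those maps give no control of curvature integrals.
\item The fallback is incorrect as stated. Sign and ampleness give only $k_j>0$, not $k_j=n+1$. Odd $n$ needs no Libgober--Wood input once $M_j$ is diffeomorphic to $\CP^n$.
\end{itemize}

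\textbf{How to close the step.} The argument is short and uses only ingredients you already have.
\begin{enumerate}
\item Take $h$ to be the ample generator, so $\int_{M_j}h^n=1$, and write $[\omega_j]=a_jh$.
\item Then $\Vol_{g_j}(M_j)=a_j^n/n!\to(2\pi(n+1))^n/n!$, so $a_j\to2\pi(n+1)$.
\item Integrating $\Ric(\omega_j)-\omega_j\geq0$ against $\omega_j^{n-1}$ gives $(2\pi k_j-a_j)a_j^{n-1}\geq0$. Hence $k_j\geq a_j/2\pi$, and integrality gives $k_j\geq n+1$ for large $j$.
\item Kobayashi--Ochiai gives $k_j\leq n+1$, with equality only for $\CP^n$.
\end{enumerate}
Alternatively, use Hirzebruch--Kodaira. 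With the full class $p(M_j)=(1+h^2)^{n+1}$ and $\chi(\OO_{M_j})=1$, Riemann--Roch gives $\binom{(k_j+n-1)/2}{n}=1$, which forces $k_j=n+1$ when $k_j>0$.

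\textbf{Comparison with the paper.} Once corrected, your argument uses GH convergence essentially, through Cheeger--Colding, to get $b_2=1$. After that it needs only the classical Kobayashi--Ochiai theorem. The paper instead rescales to $\Ric(\widehat\omega_j)\geq(n+1)\widehat\omega_j$ and applies the almost-maximal-volume gap theorem of Zhang, proved in Liu's appendix. That route needs only the volume convergence and no topology, but it rests on much deeper input.
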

\begin{proof}
Set $\widehat\omega_j=\omega_j/(n+1)$. Then
$\Ric(\widehat\omega_j)\geq(n+1)\widehat\omega_j$, and its volume tends to the volume of $\omega_n/(n+1)$. The almost-maximal-volume theorem in \cite[Theorem 1.3, proved in Liu's appendix]{Zhang} gives the conclusion. The fixed factor $n!$ between volume conventions changes only the dimensional gap constant.
\end{proof}

\begin{proof}[Proof of Theorem~\ref{thm:mainrigidity}]
If the assertion failed, there would be a sequence with $\Ric(\omega_j)\geq\omega_j$ and $\lambda_{n^2+3}(\omega_j)\leq1+j^{-1}$ for which either the biholomorphic conclusion fails or the GH distance is bounded below by a fixed positive number. Its full critical limit has $q\geq n^2+3$. By Corollary~\ref{cor:highmodels} and \eqref{eq:volumeconv}, it converges to normalized $\CP^n$ with absolute volume convergence. Lemma~\ref{lem:biholomorphicgap} gives the biholomorphic conclusion, and the GH distances tend to zero. This is a contradiction.
\end{proof}

\begin{corollary}\label{cor:averageddefect}
Under a sequence of hypotheses with $\lambda_{n^2+3}(\omega_j)\to1$,
\[
 \int_{M_j}\tr_{\omega_j}(\Ric(\omega_j)-\omega_j)\dd\mu_j\longrightarrow0.
\]
\end{corollary}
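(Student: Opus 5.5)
We argue by contradiction and subsequences, using the integrated Bochner identity of Lemma~\ref{lem:bochner} together with the strong convergence of the critical Gram matrices. Suppose that along some subsequence the integral stays above $c>0$. Pass to a further subsequence as in Section~4. By Corollary~\ref{cor:highmodels}, the limit is normalized $(\CP^n,\omega_n)$ with $q=n^2+2n$, and by Corollary~\ref{cor:centerexclusion} and Proposition~\ref{prop:kahlerreconstruction} the limiting Gram matrix $G$ has rank $2n$ everywhere. On the homogeneous limit, $\sum_a G_{aa}=\sum_a|\nabla u_a|^2$ is $S$-invariant, hence a positive constant. Its value is $2q$, because each $\int G_{aa}\dd\mu=2$.

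The approximating quantity is the sum
\[
 \sum_{a=1}^q\int_{M_j}\theta_j(\nabla^{1,0}u_{j,a},\nabla^{1,0}u_{j,a})\dd\mu_j\leq q(1+\eta_j)\eta_j\to0,
\]
which holds by Lemma~\ref{lem:bochner} with $\theta_j=\Ric(\omega_j)-\omega_j\geq0$. Pointwise, this equals $\tfrac12\tr_{\R}(\theta_j^\sharp G_j)$, up to the fixed normalization in \eqref{eq:hessnorms}, where $G_j=\sum_a\nabla u_{j,a}\otimes\nabla u_{j,a}$ is the symmetric endomorphism. So the task is to bound $\tr\theta_j$ below by the pairing with $G_j$ outside a small set. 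This works because $G_j$ is an endomorphism of $TM_j$ which, in the limit, is a positive multiple of the identity: on $\CP^n$, the $U(n+1)$-invariance of the critical space makes $\sum_a\nabla u_a\otimes\nabla u_a$ equal to $(q/n)\,g$.

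The main obstacle is the passage from this limit statement to the approximants, since $G_j$ lives on $M_j$ and only its scalar invariants converge directly. We plan to use \eqref{eq:strongGram}. It gives strong $L^2$ convergence of $\tr G_j=\sum_a|\nabla u_{j,a}|^2$ to $2q$ and of $\tr(G_j^2)=\sum_{a,b}\ip{\nabla u_{j,a}}{\nabla u_{j,b}}^2$ to $\tr(G^2)=4q^2/(2n)$. These two quantities control the deviation of $G_j$ from a multiple of the identity:
\[
 \int\Bigl|G_j-\tfrac{q}{n}I\Bigr|^2\dd\mu_j=\int\Bigl(\tr G_j^2-\tfrac{2q}{n}\tr G_j+\tfrac{q^2}{n^2}\cdot2n\Bigr)\dd\mu_j\to0.
\]
We then write
\[
 \tfrac qn\tr\theta_j=\tr(\theta_jG_j)+\tr\bigl(\theta_j(\tfrac qnI-G_j)\bigr).
\]
The first term has integral tending to zero. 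The second is at most $|\theta_j|\,|G_j-\tfrac qnI|$. For $\theta_j\geq0$ we have $|\theta_j|\leq\tr\theta_j$, but $\tr\theta_j$ is only controlled in $L^1$. So on the set where $|G_j-\tfrac qnI|\leq\tfrac q{2n}$ we absorb the second term instead: there $G_j\geq\tfrac q{2n}I$, hence $\tr(\theta_jG_j)\geq\tfrac q{2n}\tr\theta_j$. This bounds $\int_{\text{good}}\tr\theta_j\to0$.

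On the bad set, whose $\mu_j$-measure tends to zero, we need an a priori integrability bound for $\tr\theta_j$. For this we use $\int\tr_{\omega_j}\Ric\dd\mu_j=\int\mathrm{Scal}/2$. By Gauss--Bonnet-type Chern number identities, $\int\Ric\wedge\omega^{n-1}$ is cohomological, namely $2\pi c_1(M)\cdot[\omega]^{n-1}$. Since $M_j\cong\CP^n$ by Theorem~\ref{thm:mainrigidity}, this is not pointwise control, so we expect to need a further device. Our first attempt is to replace the bad-set argument by the estimate $\tr\theta_j\leq C\,\tr(\theta_j G_j)/\lambda_{\min}(G_j)$ and to use Lemma~\ref{lem:projection}-type regularization to keep $\lambda_{\min}(G_j)$ away from zero in measure. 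If that fails, we fall back on volume convergence \eqref{eq:volumeconv}: since $[\omega_j]$ is a class on $\CP^n$, the quantity $\int\tr\theta_j\dd\mu_j=n\bigl(2\pi c_1\cdot[\omega_j]^{n-1}/[\omega_j]^n\bigr)-n$ is determined by the class of $\omega_j$. Volume convergence together with $\Ric\geq\omega$ (so $[\omega_j]\leq 2\pi c_1$) forces $[\omega_j]\to2\pi c_1$, and then the expression tends to zero. We expect this cohomological route to be the cleanest.
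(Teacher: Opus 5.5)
Your cohomological fallback is correct and is essentially the paper's proof. After the biholomorphic identification, $H^{1,1}(\CP^n,\R)$ is one-dimensional, so $[\omega_j]=t_j[\omega_n]$ with $[\omega_n]=2\pi c_1(\CP^n)$. Your formula then reads $\int\tr_{\omega_j}(\Ric(\omega_j)-\omega_j)\dd\mu_j=n(t_j^{-1}-1)$. Moreover $t_j^n=\Vol_{g_j}(M_j)/\Vol(\CP^n,g_n)\to1$ by \eqref{eq:volumeconv}, since every subsequential limit is $\CP^n$. You should state the one-dimensionality of $H^{1,1}$ explicitly, because that is what turns ``$[\omega_j]\le2\pi c_1$ plus volume convergence'' into convergence of the class.

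The Gram-matrix argument that occupies most of the proposal does not close on its own, and you should drop it rather than present it as the main line. On the bad set you need uniform integrability of $\tr\theta_j$. The hypothesis $\Ric\ge\omega$ does not provide it: there is no upper Ricci bound, and even an $L^1$ bound would not suffice. The $\lambda_{\min}(G_j)$ regularization only relocates the same exceptional set. The only control of $\int\tr\theta_j\dd\mu_j$ available here is the class computation itself, and that already gives the full conclusion.
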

\begin{proof}
After the biholomorphic identifications, write $[\omega_j]=t_j[\omega_n]$, since $H^{1,1}(\CP^n,\R)$ is one-dimensional. The nonnegative Ricci defect gives $0<t_j\leq1$, and volume convergence gives $t_j\to1$. Cohomological integration gives the exact formula
\[
 \int\tr_{\omega_j}(\Ric(\omega_j)-\omega_j)\dd\mu_j=n(t_j^{-1}-1).
\]
This is an equality of cohomological integrals; it does not assert $\omega_j=t_j\omega_n$ as metrics.
\end{proof}

Neither the preceding proof nor the conclusion asserts smooth convergence of the original metrics, a curvature upper bound, or an explicit power law for $\delta(n,\epsilon)$. The compactness argument and the separate volume-gap theorem are sufficient for the stated almost rigidity.

\section{Sharp examples in every dimension}\label{sec:examples}

\subsection{A smooth collapsing sphere}
\begin{proposition}\label{prop:sphere}
There are smooth K\"ahler metrics $(S^2,g_\beta,J_\beta,\omega_\beta)$, $0<\beta<1$, such that
\[
 K_{g_\beta}\geq1,\qquad\operatorname{Area}(g_\beta)=4\pi\beta,
 \qquad 2\leq\nu_1(g_\beta)\leq2+6\beta.
\]
They converge in normalized measured GH topology to
\[
 I=\left([0,\pi],dt^2,\tfrac12\sin t\dd t\right).
\]
If the real eigenvalues are numbered starting with $\nu_0=0$, then for each fixed $\ell$,
\begin{equation}\label{eq:spherespectrum}
 \nu_\ell(g_\beta)\longrightarrow\ell(\ell+1).
\end{equation}
In particular, exactly one nonzero mode tends to the critical real eigenvalue two.
\end{proposition}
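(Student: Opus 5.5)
We will build the metrics as rotationally symmetric metrics on $S^2$, written as $g_\beta=dt^2+f_\beta(t)^2\,d\theta^2$ on $(0,\pi)\times(\R/2\pi\Z)$. Any rotationally symmetric metric on the oriented sphere is K\"ahler for the rotation-by-$90^\circ$ complex structure, so $J_\beta$ and $\omega_\beta$ come for free. The model is the football $f=\beta\sin t$. It has $K\equiv1$ and area $4\pi\beta$, but cone angle $2\pi\beta$ at both poles, so it must be smoothed. We take $f_\beta$ odd and smooth at $t=0$ and $t=\pi$, with $f_\beta'(0)=1=-f_\beta'(\pi)$, and concave with $-f_\beta''\geq f_\beta$, so that $K=-f''/f\geq1$. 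For example, near $t=0$ use a cap $f=\epsilon\sin(t/\epsilon)$, which has curvature $\epsilon^{-2}\geq1$. Then glue concavely to a curve $\beta'\sin(t+c)$ by a $C^\infty$ convex interpolation of the angle $f'$, keeping $-f''/f\geq1$, and do the same symmetrically at $t=\pi$. Gauss--Bonnet forces $\int K\,dA=4\pi$, so exact area $4\pi\beta$ is not automatic. We fix it afterwards by choosing the free parameter ($\beta'$ or the cap sizes) via the intermediate value theorem. Alternatively, we can take $f=\beta\sin t$ away from tiny polar caps and let all excess curvature sit in the caps. The area changes continuously, so the normalization $\operatorname{Area}=4\pi\beta$ is achievable up to reparametrizing $\beta$.

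Convergence is direct. As $\beta\to0$ we have $\sup f_\beta\to0$. So the projection $(t,\theta)\mapsto t$ is an $o(1)$-GH approximation to $[0,\pi]$ once the caps have $t$-length $o(1)$, which holds when the cap scale shrinks with $\beta$. The normalized measure is $f_\beta\,dt\,d\theta/(2\pi\int f_\beta)$. This pushes forward to $f_\beta\,dt/\int f_\beta$, which converges weakly to $\tfrac12\sin t\,dt$ because $f_\beta/\beta\to\sin t$ uniformly outside the caps.

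For the spectrum, the bound $\nu_1\geq2$ is Lichnerowicz for $K\geq1$. For \eqref{eq:spherespectrum}, the $\theta$-Fourier modes with $k\neq0$ have Rayleigh quotient at least $k^2/\sup f_\beta^2\to\infty$. Hence the bounded spectrum comes from radial functions, with Dirichlet form $\int u'^2f\,dt$ against $\int u^2f\,dt$. This converges, by Proposition~\ref{prop:compactness} (or directly by Mosco convergence of these one-dimensional weighted forms), to the Legendre operator on $I$, whose eigenvalues are $\ell(\ell+1)$. These are simple. So exactly one nonzero mode tends to $2$, namely $\cos t$, and the second mode tends to $6$.

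The upper bound $\nu_1\leq2+6\beta$ is the quantitative step and the main obstacle. We plan to test with the radial function $u=-f_\beta'$, which equals $\cos t$ exactly on the football region. Integrating by parts gives
\[
 \int u'^2f=\int f''^2 f=\int K^2f^3,\qquad \int u^2f=\int f'^2 f,
\]
and we must compare these. On the football region $K=1$, where the ratio is exactly $2$, the mean is zero there by symmetry, and the caps contribute errors. With the cap curvature scale chosen suitably relative to $\beta$ (caps of size $\epsilon\ll\beta$), the cap contributions to both integrals should be $O(\beta)$ relative to the main terms, which have size $\sim\beta^3$. Making this precise with the explicit constant $6$ is the delicate bookkeeping. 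If it fails, we will instead choose the caps so that $\int K^2f^3\leq(1+3\beta)\cdot2\int f'^2f$ by direct estimation, exploiting that $K^2f^3=f''^2f$ is small where $f$ is small.
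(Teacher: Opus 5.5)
There is a genuine gap in the step you already flagged: the upper bound $\nu_1\le2+6\beta$. The test function $u=-f_\beta'$ cannot produce it for the metrics you describe. Smoothness at a pole forces $f_\beta'(0)=1$, while on the football region $|u|\le\beta'$. So $u$ must climb by an amount of order one inside each cap. In two dimensions the Dirichlet energy is scale invariant, so shrinking the caps does not make this climb cheap.

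Concretely, take a round cap $f=\epsilon\sin(t/\epsilon)$ carried up to $t=\epsilon s_0$. Then $u=-\cos(t/\epsilon)$ and
\[
 2\pi\int_{\mathrm{cap}}u'^2f\,dt=2\pi\int_0^{s_0}\sin^3s\,ds,
\]
which is independent of $\epsilon$. If the cap is carried to where $f'\approx\beta'$, this is about $4\pi/3$ per cap. The denominator $2\pi\int u^2f\,dt\approx\frac{4\pi}{3}\beta'^3$ comes from the football region. So the Rayleigh quotient of $-f_\beta'$ is of order $\beta^{-3}$, not $2+O(\beta)$.

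Your fallback rests on the same misconception. $K^2f^3$ is not small where $f$ is small: in a cap of size $\epsilon$ one has $K\sim\epsilon^{-2}$, $f\lesssim\epsilon$, and cap length $\sim\epsilon$. Hence $\int_{\mathrm{cap}}K^2f^3\,dt\sim1$, which dwarfs the main terms of size $\beta^3$. The other parts of your plan are fine: Lichnerowicz for $\nu_1\ge2$, the GH and measure convergence, divergence of the angular modes, and Legendre convergence of the radial part.

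The right test function is the Hamiltonian of the rotation (the moment map), $x(t)=-1+\beta^{-1}\int_0^tf_\beta$ with $\beta=\operatorname{Area}/4\pi$. Its gradient $f_\beta/\beta$ is tiny in the caps, and its Rayleigh quotient is $\frac{3}{2\beta^2}\int_{-1}^1f_\beta^2\,dx$.

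Better still, use $x$ as the coordinate, as the paper does. Put $F=f^2/\beta$; then $g=\beta(dx^2/F+F\,d\theta^2)$, and:
\begin{itemize}
\item $dA=\beta\,dx\,d\theta$, so the area is exactly $4\pi\beta$ with no intermediate-value adjustment;
\item $K=-F''/2\beta$, so $K\ge1$ becomes the linear condition $F''\le-2\beta$;
\item pole smoothness becomes $F(\pm1)=0$, $F'(\pm1)=\mp2$.
\end{itemize}
The paper takes $F_\beta=\beta(1-x^2)+(1-\beta)h_{\beta^2}$ with an explicit concave $h_\epsilon$ satisfying $0\le h_\epsilon\le2\epsilon$. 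This gives the quotient $\frac{3}{2\beta}\int F_\beta\,dx\le2+6\beta$ at once. The sandwich $1-x^2\le F_\beta/\beta\le1-x^2+2\beta$ also yields the Legendre spectral convergence directly by min--max. It replaces your unspecified arc-length gluing as well, which you would otherwise have to carry out while keeping $K\ge1$ and exact oddness at the poles.
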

\begin{proof}
For $\epsilon>0$ and $-1\leq x\leq1$, set
\[
 h_\epsilon(x)=2\epsilon\,
 \frac{\cosh(1/\epsilon)-\cosh(x/\epsilon)}{\sinh(1/\epsilon)}.
\]
Then $h_\epsilon(\pm1)=0$, $h_\epsilon'(-1)=2$, $h_\epsilon'(1)=-2$, $h_\epsilon''<0$, and $0\leq h_\epsilon\leq2\epsilon$. Define
\begin{equation}\label{eq:spheremetric}
 F_\beta(x)=\beta(1-x^2)+(1-\beta)h_{\beta^2}(x),\qquad
 g_\beta=\beta\left(\frac{dx^2}{F_\beta(x)}+F_\beta(x)\,d\theta^2\right),
\end{equation}
where $\theta$ has period $2\pi$.

The function $F_\beta$ is analytic, positive in $(-1,1)$, vanishes at the endpoints, and has slopes $2,-2$. To verify smoothness, put $s=1-x$ near the upper endpoint. Then $F_\beta(1-s)=2s+O(s^2)$, and the radial distance has the form
\[
 \rho=\sqrt\beta\int_0^sF_\beta(1-u)^{-1/2}\dd u
      =\sqrt{2\beta s}\,b_\beta(s),\qquad b_\beta(0)=1,
\]
with $b_\beta$ analytic. Thus $s=\rho^2c_\beta(\rho^2)$ and the angular radius is $\rho a_\beta(\rho^2)$, with $a_\beta(0)=1$. This is a smooth polar metric. The lower endpoint is identical. Direct computation gives
\begin{equation}\label{eq:spherecurvature}
 dA_\beta=\beta\dd x\dd\theta,
 \qquad K_{g_\beta}=-\frac{F_\beta''}{2\beta}\geq1.
\end{equation}
Every oriented surface metric defines a compatible complex structure. Its K\"ahler Ricci form is $K_{g_\beta}\omega_\beta$, so it satisfies the desired Ricci inequality.

The odd function $x$ has mean zero and Rayleigh quotient
\[
 \frac{\int|\nabla x|^2\dd A_\beta}{\int x^2\dd A_\beta}
 =\frac3{2\beta}\int_{-1}^1F_\beta(x)\dd x\leq2+6\beta.
\]
The lower bound follows from Lemma~\ref{lem:bochner} in complex dimension one.

Put $A_\beta=F_\beta/\beta$. Then
\begin{equation}\label{eq:Acomparison}
 1-x^2\leq A_\beta(x)\leq1-x^2+2\beta.
\end{equation}
The meridian coordinate
\[
 t_\beta(x)=\int_{-1}^xA_\beta(s)^{-1/2}\dd s
\]
converges uniformly to $t_0(x)=\arccos(-x)$ by dominated convergence. Every circular fiber has intrinsic diameter at most $\pi\beta\sqrt{1+2\beta}$. Distances between points lie between their meridian-distance difference and that difference plus this fiber diameter. Thus $\Phi_\beta(x,\theta)=t_0(x)$ is a surjective GH approximation. Since
\[
 \mu_\beta=\frac1{4\pi}\dd x\dd\theta,
 \qquad (\Phi_\beta)_\#\mu_\beta=\tfrac12\sin t\dd t,
\]
the convergence is measured.

For the spectral assertion, an angular mode $v(x)e^{im\theta}$ has real Rayleigh quotient
\begin{equation}\label{eq:angularquotient}
 \frac{\displaystyle\int_{-1}^1
 \left(A_\beta|v'|^2+\frac{m^2}{\beta^2A_\beta}|v|^2\right)\dd x}
 {\displaystyle\int_{-1}^1|v|^2\dd x}.
\end{equation}
When $m\neq0$, this is at least $m^2/[\beta^2(1+2\beta)]$ and tends to infinity. The radial operator is
\[
 -\frac d{dx}\left(A_\beta\frac d{dx}\right)
 \quad\text{on }L^2([-1,1],dx/2).
\]
For every fixed $\beta$, $A_\beta/(1-x^2)$ extends at both endpoints to finite positive values. Thus its closed energy domain agrees with the domain of the Legendre form. The latter has eigenfunctions $P_\ell(x)$ and eigenvalues $\ell(\ell+1)$. Indeed,
\[
 -\bigl((1-x^2)P_\ell'\bigr)'=\ell(\ell+1)P_\ell,
\]
and the endpoint flux vanishes. Polynomial density proves completeness; the operator is the natural self-adjoint operator of the energy form, not an imposed Dirichlet problem.

The lower comparison in \eqref{eq:Acomparison} gives radial eigenvalues at least $\ell(\ell+1)$. On the finite-dimensional space spanned by $P_0,\ldots,P_\ell$, the upper comparison gives the reverse bound $\ell(\ell+1)+C_\ell\beta$ by min--max. All nonradial modes leave every bounded window by \eqref{eq:angularquotient}. This proves \eqref{eq:spherespectrum}.
\end{proof}

\subsection{Sharpness of the dimension profile}
On normalized $\CP^k$, the real critical multiplicity is
\[
 q_k=(k+1)^2-1=k^2+2k.
\]
One explicit realization is $f_A([z])=z^*Az/(z^*z)$ for traceless Hermitian matrices $A$; these exhaust the eigenvalue-one functions \cite[Lemma 6.1]{CWZ}. For $k=0$, the space is a point and $q_0=0$. The next eigenvalue is strictly greater than the critical one.

For even $r=2k$, $0\leq k\leq n$, take
\begin{equation}\label{eq:evensharp}
 M_\beta=\CP^k\times\CP^{n-k},\qquad
 \Omega_\beta=\omega_k+\beta\omega_{n-k},
\end{equation}
omitting point factors. Its Ricci form is $\omega_k+\omega_{n-k}\geq\Omega_\beta$. Projection to the first factor gives normalized measured convergence to $\CP^k$. The second factor shrinks to a point, and all its nonzero complex eigenvalues tend to infinity. Consequently the critical multiplicity is exactly $q_k=Q(2k)$.

For odd $r=2k+1$, $0\leq k\leq n-1$, let $l=n-k-1$ and take
\begin{equation}\label{eq:oddsharp}
 M_\beta=\CP^k\times(S^2,J_\beta)\times\CP^l,
 \qquad\Omega_\beta=\omega_k+\omega_\beta+\beta\omega_l.
\end{equation}
It satisfies the same Ricci inequality and converges to $\CP^k\times I$. The product Laplacian is the sum of the factor Laplacians. Since every nonconstant bounded-window factor mode has limiting real eigenvalue at least two, mixed nonconstant modes have limiting eigenvalue at least four. Hence
\[
 \dim\ker(\LL_{\CP^k\times I}-2)=q_k+1=(k+1)^2=Q(2k+1).
\]
The next eigenvalue remains strictly above two. The essential dimensions are the displayed dimensions: on the interior of $I$ the density is smooth and positive, and the endpoints have zero measure.

Writing $v_k=\Vol(\CP^k,g_k)$ and $v_0=1$, the absolute volumes in \eqref{eq:evensharp} and \eqref{eq:oddsharp} are respectively
\[
 v_kv_{n-k}\beta^{n-k},\qquad
 4\pi v_kv_l\beta^{l+1}.
\]
They tend to zero whenever $r<2n$. At $r=0$ the whole manifold shrinks to a point; all nonzero eigenvalues diverge. At $r=2n$, the constant $\CP^n$ sequence is the equality model. This proves all the sharpness claims in Theorem~\ref{thm:dimensionintro}.

Taking $k=n-1$ in \eqref{eq:oddsharp} proves Theorem~\ref{thm:noncollapseintro}. Taking the unscaled normalized product $\CP^{n-1}\times\CP^1$ gives $n^2+2$ exact critical modes and proves sharpness of the index in Theorem~\ref{thm:mainrigidity}.

For completeness, $Q(2k)=(k+1)^2-1$ and $Q(2k+1)=(k+1)^2$. Inverting these inequalities gives \eqref{eq:inverseprofile}. The examples at the minimal allowed dimension have at least the requested $p$ critical modes, so the resulting maximal dimension loss is sharp in precisely that sense.

\subsection{Sharpness of disappearing symmetry}
Denote the sphere in Proposition~\ref{prop:sphere} by $C_\beta$. For $n\leq r\leq2n$, take
\[
 \CP^{r-n}\times C_\beta^{\,2n-r}.
\]
The original complex dimension is $n$ and the limiting real dimension is $r$. Each collapsing sphere contributes one central critical function. Its Hamiltonian derivative annihilates every fixed bounded spectral window for sufficiently small $\beta$: such windows contain only radial modes in that factor. The projective factor retains its faithful semisimple action. Thus
\[
 \dim\kk=2n-r.
\]
For $0\leq r\leq n$, take
\[
 C_\beta^{\,r}\times(\CP^{n-r},\beta g_{n-r}).
\]
The limit is $I^r$, with $r$ critical modes and all their actions invisible. Hence $\dim\kk=r$. The product spectrum again excludes extra critical mixed modes. This proves sharpness in Theorem~\ref{thm:lossintro} for every dimension.

\subsection{A model quantitative estimate}
There is a useful estimate on the comparison model which does not require a quantitative version of Theorem~\ref{thm:mainrigidity}.
\begin{proposition}\label{prop:modeltail}
Let $f_1,\ldots,f_k$, $1\leq k\leq n$, be real $L^2$-orthonormal commuting critical functions on normalized $\CP^n$, and let $h_{\min}$ be the smallest eigenvalue of their gradient Gram matrix. Then
\begin{equation}\label{eq:modeltail}
 \mu_n\{h_{\min}\leq s\}\leq\frac{n(n+1)}{2(n+2)}s,
 \qquad0<s\leq1.
\end{equation}
The exponent one is optimal uniformly over these subspaces.
\end{proposition}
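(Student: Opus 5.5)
The plan is to diagonalize the commuting family and reduce everything to the moment map of the standard torus action, where the Riemannian measure becomes uniform measure on a simplex. By \cite[Lemma~6.1]{CWZ}, each critical function has the form $f_A([z])=z^*Az/(z^*z)$ with $A$ traceless Hermitian, and the Poisson bracket of $f_A,f_B$ is a multiple of $f_{i[A,B]}$. Hence the commuting hypothesis means the matrices $A_1,\ldots,A_k$ commute, so they are simultaneously diagonalizable by a unitary change of coordinates. That change is an isometry of $(\CP^n,\omega_n)$ and preserves $\mu_n$, so we may take $A_a=\operatorname{diag}(v_a)$ with $v_a\in\R^{n+1}$ and $\sum_i v_{a,i}=0$. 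Writing $x_i=|z_i|^2/|z|^2$, the point $x=(x_0,\ldots,x_n)$ lies in the standard simplex $\Delta_n$ and $f_a=\ip{v_a}{x}$. Two standard facts about this moment map are then used:
\begin{enumerate}[label=(\alph*),leftmargin=2.5em]
\item By Duistermaat--Heckman, $x_\#\mu_n$ is the uniform probability measure on $\Delta_n$, i.e.\ Dirichlet$(1,\ldots,1)$.
\item The Fubini--Study gradient pairing is a covariance: $\ip{\nabla f_v}{\nabla f_w}=c_n\bigl(\sum_i x_iv_iw_i-\ip vx\ip wx\bigr)$ for a constant $c_n$ depending only on the normalization. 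This is checked in an affine chart at one point and extended by $U(n+1)$-equivariance.
\end{enumerate}

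The second step fixes the constants using Dirichlet moments. These are $\mathbb E x_i=1/(n+1)$, $\mathbb E x_i^2=2/((n+1)(n+2))$ and $\mathbb E x_ix_j=1/((n+1)(n+2))$ for $i\neq j$. For sum-zero vectors they give
\[
 \int f_vf_w\dd\mu_n=\frac{\ip vw}{(n+1)(n+2)},\qquad
 \mathbb E\Bigl[\sum_ix_iv_iw_i-\ip vx\ip wx\Bigr]=\frac{\ip vw}{n+2}.
\]
Consequently $L^2$-orthonormality of the $f_a$ means the $v_a$ are orthogonal and all have the same norm. The normalization $\int|\nabla f_a|^2\dd\mu_n=2$ from the proof of Theorem~\ref{thm:commuting} then gives $c_n|v_a|^2=2(n+2)$. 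I expect verifying this covariance formula and its constant to be the only real computational obstacle. An alternative is to avoid computing $c_n$ directly and read it off from the integral identity $\int G_{aa}\dd\mu_n=2$, as above.

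The third step is the pointwise lower bound. For $t\in\R^k$ put $w=\sum_at_av_a$, which is a sum-zero vector with $|w|^2=|t|^2|v_1|^2$. Then
\[
 t^TGt=c_n\operatorname{Var}_x(w)=c_n\min_{\gamma\in\R}\sum_ix_i(w_i-\gamma)^2
 \geq c_n\,\bigl(\min_ix_i\bigr)\min_{\gamma}|w-\gamma\mathbf 1|^2
 =c_n\bigl(\min_ix_i\bigr)|w|^2 ,
\]
where the last equality uses that $w\perp\mathbf 1$. Hence $h_{\min}\geq2(n+2)\min_ix_i$. It follows that
\[
 \mu_n\{h_{\min}\leq s\}\leq(n+1)\,\mathbb P\Bigl\{x_0\leq\tfrac{s}{2(n+2)}\Bigr\}.
\]
The marginal of $x_0$ is Beta$(1,n)$, so $\mathbb P\{x_0\leq u\}=1-(1-u)^n\leq nu$. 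This yields exactly $\frac{n(n+1)}{2(n+2)}s$, which is \eqref{eq:modeltail}. For $s\leq1$ the threshold $u=s/(2(n+2))$ stays inside $[0,1]$.

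For optimality of the exponent, I would take any single $f_1$ (so $k=1$) with $v_1=(1,-1,0,\ldots,0)$ rescaled. Near the face $\{x_0=0\}$ with $x_1$ of order one, one has $\operatorname{Var}_x(v_1)\approx x_0+x_1-(x_0-x_1)^2$. A better choice is to use the face $\{x_0=x_1=0\}$ region, or to exhibit a direction where the variance is comparable to $x_0$ on a set of measure comparable to $s$. For $k=1$ and $v_1$ concentrated as $(n,-1,\ldots,-1)$, the variance is $(n+1)^2x_0(1-x_0)$, which is linear in $x_0$. Hence $\mu_n\{h_{\min}\leq s\}\geq c\,s$, and no exponent larger than one holds uniformly over these subspaces.
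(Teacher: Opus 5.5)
Your proposal is correct and follows the paper's proof essentially step for step: simultaneous diagonalization, the covariance form of the gradient Gram matrix in the moment coordinates, the weighted-variance bound $h_{\min}\geq2(n+2)\min_ix_i$, uniformity on the simplex with the union bound, and the vector $(n,-1,\ldots,-1)$ for optimality. The differences are minor: the paper gets uniformity from normalized complex Gaussians rather than Duistermaat--Heckman, and it places that vector as one row of a $k$-row frame and uses $h_{\min}\leq G_{11}$, so optimality holds for every $k$.

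Two small cleanups are needed. First, do the one-point check of the covariance formula for general Hermitian $A,B$, namely $\frac{2}{n+1}\bigl(\operatorname{Re}(z^*ABz)-(z^*Az)(z^*Bz)\bigr)$ at unit $z$, because $U(n+1)$-conjugation does not preserve diagonal matrices. Second, delete the exploratory remarks about $(1,-1,0,\ldots,0)$ and the face $\{x_0=x_1=0\}$, since for $n\geq2$ that region only has measure of order $s^2$.
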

\begin{proof}
The corresponding Hermitian matrices commute, so they may be simultaneously diagonalized. Put $p_i=|z_i|^2/\sum|z_j|^2$. There is a real $k\times(n+1)$ matrix $Q$ with $QQ^T=I$ and $Q\mathbf1=0$ such that
\[
 f_\alpha=\sqrt{(n+1)(n+2)}\sum_iQ_{\alpha i}p_i,
 \qquad
 G=2(n+2)Q(\operatorname{diag}p-pp^T)Q^T.
\]
For $v\perp\mathbf1$, the weighted variance satisfies
\[
 v^T(\operatorname{diag}p-pp^T)v
 =\sum_i p_i(v_i-\bar v_p)^2
 \geq(\min_i p_i)|v|^2.
\]
Thus $h_{\min}\geq2(n+2)\min_i p_i$. Under normalized Fubini--Study measure, $p$ is uniformly distributed on the standard simplex. This follows by normalizing the squared moduli of independent standard complex Gaussian variables. Hence
\[
 \mu_n\{p_i\leq t\}=1-(1-t)^n\leq nt.
\]
The union bound proves \eqref{eq:modeltail}.

To see optimality, include $(n,-1,\ldots,-1)/\sqrt{n(n+1)}$ as a row of $Q$. Its diagonal Gram entry is
\[
 \frac{2(n+2)(n+1)}n p_0(1-p_0).
\]
Since $h_{\min}$ is bounded above by this entry, the set $p_0\leq c_ns$ gives a lower bound $c_n's$ for small $s$. No uniform exponent larger than one is possible.
\end{proof}

If $G_M$ is a comparison Gram matrix on another probability space and $\pi$ is a probability coupling with the model, put
\[
 \varepsilon_G^2=\int\norm{G_M(x)-G_*(y)}_\HS^2\dd\pi(x,y).
\]
The minimum-eigenvalue perturbation inequality and Chebyshev's inequality give, for every $\tau>0$,
\[
 \mu\{\lambda_{\min}(G_M)\leq s\}
 \leq C_n(s+\tau)+\frac{\varepsilon_G^2}{\tau^2}.
\]
Taking $\tau=\varepsilon_G^{2/3}$ gives $C_n(s+\varepsilon_G^{2/3})$ when $\varepsilon_G\leq1$. This is a quantitative transfer estimate in the stated comparison error. It is not an estimate of that error by a power of spectral pinching.

\section{Anticanonical sections from the spectrum}

\subsection{A volume-independent holomorphic projection}
The individual complex gradients need not admit a uniformly controlled projection to holomorphic vector fields under a Ricci lower bound alone. Their highest exterior powers take values in a positive line bundle, and satisfy a different estimate.

Equip $K_M^{-1}$ and its tensor powers with the metrics induced by $g$. The curvature of $K_M^{-1}$ is $\Ric(\omega)$.
\begin{lemma}\label{lem:holprojection}
Suppose $\Ric(\omega)\geq\omega$. For every integer $m\geq0$ and every smooth section $s$ of $K_M^{-m}$, its holomorphic orthogonal projection satisfies
\begin{equation}\label{eq:holprojection}
 \norm{s-P^{\mathrm{hol}}s}_2^2
 \leq\frac1{m+1}\norm{\dbar s}_2^2.
\end{equation}
The estimate holds for either ordinary or normalized volume.
\end{lemma}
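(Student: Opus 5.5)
The plan is the standard Hörmander/Kodaira $L^2$ argument on the positive line bundle $L=K_M^{-m}$. The residual $s-P^{\mathrm{hol}}s$ is orthogonal to $\ker\dbar$ on sections, so it equals $\dbar^*\beta$ for $\beta=\dbar s$. Equivalently, it is the minimal-norm solution of $\dbar v=\dbar s$. So it suffices to show that the Laplacian $\square=\dbar\dbar^*+\dbar^*\dbar$ on $L$-valued $(0,1)$-forms satisfies $\square\geq m+1$. Then, for the $\dbar$-closed form $\beta=\dbar s$, we get $\norm{\dbar^*G\beta}_2^2=\ip{G\beta}{\beta}\leq(m+1)^{-1}\norm\beta_2^2$, where $G$ is the Green operator. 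This is exactly \eqref{eq:holprojection}.

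To get the lower bound I will pass to $(n,1)$-forms. Write $L$-valued $(0,1)$-forms as $(n,1)$-forms with values in $L\otimes K_M^{-1}=K_M^{-(m+1)}$. This identification is an isometry for the metrics induced by $g$, and it intertwines the $\dbar$-complexes. The Bochner--Kodaira--Nakano identity for $(n,q)$-forms with values in a Hermitian line bundle $F$ then gives
\[
 \norm{\dbar\alpha}^2+\norm{\dbar^*\alpha}^2\geq\int\ip{[i\Theta_F,\Lambda]\alpha}{\alpha}.
\]
Here $i\Theta_F=(m+1)\Ric(\omega)\geq(m+1)\omega$. On $(n,1)$-forms the operator $[i\Theta_F,\Lambda]$ acts by the sum of the smallest eigenvalue of $\Theta_F$ relative to $\omega$ (more precisely, the eigenvalue sum over the single index). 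This yields the pointwise bound $\geq(m+1)|\alpha|^2$. The Nakano-type curvature term is positive semidefinite, and it is applied only in the form given by the standard Akizuki--Nakano inequality.

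Equivalently, one can avoid the identification and use the Weitzenböck formula directly on $L$-valued $(0,1)$-forms. There the curvature term is $\Ric+\Theta_L=(m+1)\Ric\cdot$. A shorter variant applies the Kähler identity only to $\dbar s$ as a section-valued derivative: integrate by parts,
\[
 \norm{\dbar s}^2=\norm{\nabla^{1,0}s}^2+m\int\Ric(\cdot)|s|^2\text{-terms}.
\]
Then combine this with the Bochner formula for $\nabla^{1,0}s$. I would use the $(n,1)$ form because its bookkeeping is cleanest.

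For normalized versus ordinary volume, note that both sides of \eqref{eq:holprojection} scale by the same constant. The projection $P^{\mathrm{hol}}$ is also unchanged when the measure is multiplied by a constant. So the statement is independent of the normalization.

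The main point to check, and the step I expect to be the main obstacle, is the exact constant $m+1$ rather than $m$. That $+1$ comes precisely from twisting by $K_M^{-1}$ when passing to $(n,1)$-forms, which adds one copy of $\Ric\geq\omega$. I would verify it against the equality case on $\CP^n$, $m=0$, where the first eigenvalue of the relevant Laplacian should be one. The remaining ingredients are standard: the orthogonal decomposition and the existence of the Green operator on a compact manifold. The estimate is uniform and involves no volume constants.
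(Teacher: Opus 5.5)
Your proposal is correct and follows the paper's proof. Both use the isometric, $\dbar$-compatible identification of $K_M^{-m}$-valued $(0,1)$-forms with $(n,1)$-forms valued in $K_M^{-(m+1)}$, whose curvature $(m+1)\Ric(\omega)\geq(m+1)\omega$ gives the H\"ormander--Nakano bound for the minimal solution $s-P^{\mathrm{hol}}s$ of $\dbar v=\dbar s$, and both note that rescaling the measure by a constant affects both sides equally. One small slip: in your first sentence the residual should be $\dbar^*G\dbar s$ rather than $\dbar^*\dbar s$, and your Green-operator computation in the next sentence already uses the correct form.
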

\begin{proof}
There is a canonical isometric holomorphic identification
\[
 \Lambda^{0,q}T^*M\otimes K_M^{-m}
 \simeq\Lambda^{n,q}T^*M\otimes K_M^{-(m+1)}.
\]
Locally, it inserts a top holomorphic form and its dual; their norms cancel. It commutes with $\dbar$. The coefficient line bundle on the right has curvature
\[
 i\Theta(K_M^{-(m+1)})=(m+1)\Ric(\omega)\geq(m+1)\omega.
\]
The H\"ormander--Nakano estimate for closed $(n,1)$-forms therefore solves $\dbar v=\dbar s$ with $\norm v_2^2\leq(m+1)^{-1}\norm{\dbar s}_2^2$; see \cite[Theorem 1.1]{Raufi}. The minimal solution is $s-P^{\mathrm{hol}}s$. Multiplication of the measure by a constant changes both sides equally.
\end{proof}

\begin{proof}[Proof of Theorem~\ref{thm:recoveryintro}]
The spectral bounds give $\norm{W_a}_\infty\leq C_n$ and $\norm{\dbar W_a}_2\leq C\sqrt\eta$, by Lemmas~\ref{lem:bochner} and~\ref{lem:lowfrequency}. For $s_I=W_{i_1}\wedge\cdots\wedge W_{i_n}$,
\[
 \dbar s_I=\sum_{l=1}^n
 W_{i_1}\wedge\cdots\wedge\dbar W_{i_l}\wedge\cdots\wedge W_{i_n}.
\]
Thus $\norm{\dbar s_I}_2\leq C_n\sqrt\eta$. Lemma~\ref{lem:holprojection} with $m=1$ proves \eqref{eq:recoveryintro}. Also,
\begin{equation}\label{eq:sectionnormbound}
 \sum_I\norm{\sigma_I}_2^2\leq\sum_I\norm{s_I}_2^2\leq C(n,q),
\end{equation}
since orthogonal projection is contractive.

For a sequence with limiting complex Gram matrix $H=G+iP$, Cauchy--Binet gives
\begin{equation}\label{eq:wedgeGram}
 S_j:=\sum_I|s_{j,I}|^2=e_n(H_j),
\end{equation}
where $e_n$ is the $n$th elementary symmetric polynomial. The matrices are uniformly bounded and converge strongly in $L^2$, so $S_j\to e_n(H)$ strongly in $L^2$. If $\rank_\C H=n$ almost everywhere, this limiting function is positive almost everywhere. Its integral is positive, and \eqref{eq:recoveryintro} shows that not all $\sigma_{j,I}$ can be zero for large $j$.

To identify the projective evaluations, choose a local unitary frame of $K_M^{-1}$ and write the sections as vectors $s=(s_I)$ and $\sigma=(\sigma_I)$. The associated projectors are $\Pi_s=ss^*/|s|^2$ and $\Pi_\sigma=\sigma\sigma^*/|\sigma|^2$. For every $\tau>0$,
\begin{equation}\label{eq:projectivecomparison}
 \int_{\{S_j\geq\tau\}}\norm{\Pi_{s_j}-\Pi_{\sigma_j}}_\HS^2\dd\mu_j
 \leq\frac{C(n,q)\eta_j}{\tau}.
\end{equation}
This follows from the elementary inequality
$\norm{\Pi_s-\Pi_\sigma}_\HS^2\leq C|s-\sigma|^2/|s|^2$.
The common zero set of the nontrivial holomorphic system is a proper analytic subset and has volume zero; arbitrary values there do not affect the estimate.

Moreover, with the conjugate-linear-first convention,
\begin{equation}\label{eq:PluckerGram}
 (\Pi_{s_j})_{IJ}=\frac{\det(H_j)_{J,I}}{e_n(H_j)}.
\end{equation}
First pass to the limit on $e_n(H)>\tau$ and then let $\tau\downarrow0$. Equations~\eqref{eq:projectivecomparison} and~\eqref{eq:PluckerGram} prove recovery in measure. This is a statement about projective evaluations on the measured limit, not an identification of a complex manifold structure on an odd-dimensional space.
\end{proof}

Corollary~\ref{cor:dimensionequality} shows that the nonvanishing hypothesis applies at the extremal collapsing multiplicity $q=n^2$. It also applies in the high-multiplicity noncollapsed case.

\begin{corollary}\label{cor:measurepartialC0}
Let $\mathcal B_j=\sum_\alpha|\psi_{j,\alpha}|^2$ be the Bergman density of $H^0(M_j,-K_{M_j})$ for an $L^2(\mu_j)$-orthonormal basis. Under the rank hypothesis of Theorem~\ref{thm:recoveryintro},
\begin{equation}\label{eq:measurepartialC0}
 \mu_j\{\mathcal B_j<c_n\tau\}
 \leq\mu_j\{S_j<\tau\}+\frac{C_n\eta_j}{\tau},
\end{equation}
where the constants may also depend on the fixed cluster size. Consequently
\[
 \lim_{\tau\downarrow0}\limsup_j\mu_j\{\mathcal B_j<c_n\tau\}=0.
\]
\end{corollary}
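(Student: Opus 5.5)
The plan is to compare the Bergman density pointwise with the spectral density $S_j=\sum_I|s_{j,I}|^2$ through the holomorphic sections $\sigma_{j,I}=P^{\mathrm{hol}}s_{j,I}$. These $\sigma_{j,I}$ lie in $H^0(M_j,-K_{M_j})$ and satisfy $\sum_I\norm{\sigma_{j,I}}_2^2\leq C(n,q)$ by \eqref{eq:sectionnormbound}. For any holomorphic section $\sigma$, the extremal characterization of the Bergman density gives
\[
|\sigma(x)|^2\leq\mathcal B_j(x)\,\norm{\sigma}_{L^2(\mu_j)}^2 .
\]
Indeed, expanding $\sigma=\sum_\alpha c_\alpha\psi_{j,\alpha}$ and applying Cauchy--Schwarz yields this bound. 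Summing over the $\binom qn$ multi-indices, we obtain
\[
\sum_I|\sigma_{j,I}|^2\leq C(n,q)\,\mathcal B_j\quad\text{pointwise}.
\]
This is the only place where holomorphy enters, and no volume lower bound is needed since everything is normalized against $\mu_j$.

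Next I split the set $\{\mathcal B_j<c_n\tau\}$ according to whether $S_j<\tau$ or $S_j\geq\tau$. The first part contributes at most $\mu_j\{S_j<\tau\}$. On the second part, $\sum_I|\sigma_{j,I}|^2\leq C(n,q)c_n\tau$. Choosing $c_n=1/(4C(n,q))$ makes this at most $\tau/4$. Then $|s_j-\sigma_j|\geq|s_j|-|\sigma_j|\geq\sqrt\tau/2$ there, so $\sum_I|s_{j,I}-\sigma_{j,I}|^2\geq\tau/4$. Chebyshev's inequality combined with \eqref{eq:recoveryintro} bounds the measure of this part by $4C(n,q)\eta_j/\tau$. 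This proves \eqref{eq:measurepartialC0}. The constants depend on the cluster size $q$, as the statement allows.

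For the limit statement, let $j\to\infty$ with $\tau$ fixed. The term $C\eta_j/\tau$ vanishes. By \eqref{eq:wedgeGram}, $S_j\to e_n(H)$ strongly in $L^2$, and the convergence takes place along a measured GH realization. Hence $\limsup_j\mu_j\{S_j<\tau\}\leq\mu\{e_n(H)\leq\tau\}$. The main technical point is justifying this upper semicontinuity of sublevel measures under strong $L^2$ convergence on varying spaces. I would handle it by replacing the indicator with a continuous cutoff $\chi$ equal to $1$ on $[0,\tau]$ and supported in $[0,2\tau)$. Then $\int\chi(S_j)\,d\mu_j\to\int\chi(e_n(H))\,d\mu$, and $\mu_j\{S_j<\tau\}\leq\int\chi(S_j)\,d\mu_j$. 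So the $\limsup$ is at most $\mu\{e_n(H)<2\tau\}$.

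Finally, let $\tau\downarrow0$. The rank hypothesis $\rank_\C H=n$ almost everywhere implies $e_n(H)>0$ almost everywhere, because $e_n$ of a positive semidefinite Hermitian matrix of rank $n$ is the product of its nonzero eigenvalues. Hence $\mu\{e_n(H)<2\tau\}\to0$ by continuity of measure, which gives the final assertion.
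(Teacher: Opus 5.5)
Your proposal is correct and follows essentially the same route as the paper. You combine the Bergman evaluation inequality with \eqref{eq:sectionnormbound}, split according to $S_j\geq\tau$, and apply Chebyshev with \eqref{eq:recoveryintro}; your continuous-cutoff argument for $\limsup_j\mu_j\{S_j<\tau\}\leq\mu\{e_n(H)<2\tau\}$ spells out the limit step that the paper compresses into a single sentence.
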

\begin{proof}
By \eqref{eq:sectionnormbound} and the evaluation inequality,
$\sum_I|\sigma_{j,I}|^2\leq C_n\mathcal B_j$.
Let $E_j=\sum_I|\sigma_{j,I}-s_{j,I}|^2$. On $S_j\geq\tau$ and $E_j\leq\tau/4$, one has $\sum_I|\sigma_{j,I}|^2\geq\tau/4$. Chebyshev's inequality and \eqref{eq:recoveryintro} give \eqref{eq:measurepartialC0}. Since $S_j\to e_n(H)>0$ almost everywhere in the measured sense, the iterated limit follows.
\end{proof}

This estimate uses the first anticanonical power and no volume lower bound. It is not a pointwise partial $C^0$ estimate. Also, orthogonal projection is not a homomorphism of algebras. Although the raw exterior products satisfy the Pl\"ucker relations, the projected sections need not satisfy them exactly. Their projective image is therefore not asserted to lie in a Grassmannian.

\subsection{Divisors and their degrees}
Positivity of the metric on $-K_M$ makes it ample by the Kodaira embedding theorem; see \cite[Chapter VII]{Demailly}. Thus every manifold in this paper is a smooth Fano manifold. Boundedness of smooth Fano varieties in fixed dimension, as included in \cite{Birkar}, gives an integer $m_0(n)$ and a constant $V_n$ such that $-m_0K_M$ is very ample and $(-K_M)^n\leq V_n$.

For any nonzero spectral section, write
\[
 \operatorname{div}(\sigma)=\sum_\alpha b_\alpha D_\alpha,
 \qquad b_\alpha\in\Z_{>0}.
\]
With $A=-m_0K_M$, its degree is
\[
 \operatorname{div}(\sigma)\cdot A^{n-1}
 =m_0^{n-1}(-K_M)^n\leq m_0^{n-1}V_n.
\]
Each irreducible divisor has positive integral degree in this embedding. Hence
\begin{equation}\label{eq:divisorbound}
 \sum_\alpha b_\alpha\leq m_0^{n-1}V_n.
\end{equation}
The same bound applies to the common fixed divisor of the system. This controls exact divisor multiplicities, but does not imply that a moving system cannot acquire a fixed divisor in a limit.

For clarity, let $\sigma_1,\ldots,\sigma_N$ be any nonzero system in a line bundle $L$, and define its common fixed divisor by
\[
 \mathcal F=\sum_D\left(\min_i\operatorname{ord}_D\sigma_i\right)D.
\]
Writing $\sigma_i=s_{\mathcal F}\widetilde\sigma_i$, the residual sections lie in $L(-\mathcal F)$ and have no common divisorial component. Put $dd^c=(i/2\pi)\partial\dbar$ when acting on logarithms of squared norms. Poincar\'e--Lelong \cite[Chapter V, Section 13]{Demailly} gives the exact current identity
\begin{equation}\label{eq:fixedmoving}
 c_1(L,h)+dd^c\log\sum_i|\sigma_i|_h^2
 =[\mathcal F]+T^{\mathrm{mov}},\qquad T^{\mathrm{mov}}\geq0.
\end{equation}
The remaining system may still have higher-codimension base points. The following section shows that even basepoint-free spectral systems can develop end defects in their moving part.

\section{End defects of the moving spectral system}\label{sec:ends}

In this section the complex dimension is two. We prove Theorem~\ref{thm:counterintro}. The perturbation is carried out on $X=\CP^1\times\CP^1$ with its product complex structure, after identifying the second factor in Proposition~\ref{prop:sphere} with $\CP^1$. The estimates are diagonal in the collapsing parameter. No bound uniform in that parameter is required for the perturbation radius.

\subsection{The unperturbed spectral system}
Let $\omega_B$ be the curvature-one round metric on the first $\CP^1$, so that $\int\omega_B=4\pi$ and $\Ric(\omega_B)=\omega_B$. Write $a_1,a_2,a_3$ for its coordinate functions on the unit sphere. Then
\begin{equation}\label{eq:basecoords}
 -\Delta_Ba_i=a_i,\qquad \sum_i a_i^2=1,\qquad
 \int_Ba_ia_j\dd\mu_B=\frac{\delta_{ij}}3.
\end{equation}
Set $\Omega_\beta=\omega_B+\omega_\beta$ and use the notation $F_\beta$ of \eqref{eq:spheremetric}.

For small fixed $\beta$, the first four complex spectral directions are
\[
 a_1,\ a_2,\ a_3,\ h_\beta(x),
\]
where $h_\beta$ is the first nonconstant radial eigenfunction of the second sphere. Its eigenvalue lies in $(1,1+3\beta]$; strictness follows from $K_{g_\beta}>1$. The next radial eigenvalue is at least three, all angular modes are large, and the first mixed product mode is at least two. Thus
\begin{equation}\label{eq:surfacegap}
 \lambda_5(\Omega_\beta)\geq2
\end{equation}
for sufficiently small $\beta$.

Choose $h_\beta$ increasing. The one-dimensional first-eigenfunction nodal theorem and its differential equation give
\[
 A_\beta h_\beta'=-\nu_\beta\int_{-1}^x h_\beta(s)\dd s>0
 \quad(-1<x<1),
\]
where $\nu_\beta$ is its real eigenvalue. Introduce the holomorphic coordinate
\begin{equation}\label{eq:fibercoordinate}
 w=\exp(s(x)+i\theta),\qquad
 s(x)=\int_0^x\frac{du}{F_\beta(u)}.
\end{equation}
The poles are $w=0,\infty$. Set $v=w\partial_w$. Then
\[
 v=\tfrac12(F_\beta\partial_x-i\partial_\theta),\qquad
 |v|^2=\tfrac12\beta F_\beta,
 \qquad \nabla^{1,0}h_\beta=\frac{h_\beta'}\beta v.
\]
Rotation invariance of the holomorphic projection implies
\begin{equation}\label{eq:radialprojection}
 P^{\mathrm{hol}}_\beta\nabla^{1,0}h_\beta=\gamma_\beta v,
 \qquad
 \gamma_\beta=\frac1\beta
 \frac{\int_{-1}^1h_\beta'F_\beta\dd x}{\int_{-1}^1F_\beta\dd x}>0.
\end{equation}
The only rotation-invariant holomorphic vector fields are multiples of $v$.

Let $Z_a=\nabla_B^{1,0}a_a$. On $X$, all horizontal wedges $Z_a\wedge Z_b$ vanish. The three nonzero projected determinants are
\begin{equation}\label{eq:unperturbedsections}
 \sigma_{a4}(0)=\gamma_\beta Z_a\otimes v\qquad(a=1,2,3),
\end{equation}
up to fixed normalization of the real basis. The $Z_a$ have no common zero; $v$ has a simple zero at both poles. Thus the unperturbed system has fixed divisor
\[
 D_-+D_+,\qquad D_-=B\times\{0\},\quad D_+=B\times\{\infty\}.
\]
After removing it, the system is the pullback of the anticanonical system of $B$. The perturbation below changes this conclusion at every nonzero parameter.

\subsection{An angular mode with nonzero holomorphic end values}
Choose the lowest eigenfunction in angular frequency one on the second sphere:
\begin{equation}\label{eq:angularmode}
 \varphi(x,\theta)=b(x)\cos\theta,\qquad b(x)>0\ (-1<x<1),
 \qquad-\Delta_\beta\varphi=\Lambda\varphi.
\end{equation}
It exists by separation of variables and the positivity of the first Sturm--Liouville eigenfunction in that mode. It is smooth at the poles, and $\Lambda>1$ for small $\beta$. The radial equation is
\[
 -\frac1{2\beta}(F_\beta b')'+\frac{1}{2\beta F_\beta}b=\Lambda b.
\]
This high-frequency mode is used only as a perturbing potential; it is not added to the critical cluster.

\begin{lemma}\label{lem:angularprojection}
The holomorphic projection of $\nabla^{1,0}\varphi$ is
\[
 \widehat Z=c_0\partial_w+c_2w^2\partial_w,
\]
with $c_0>0$ and $c_2<0$ in the coordinate \eqref{eq:fibercoordinate}. In particular it is nonzero at both $w=0$ and $w=\infty$.
\end{lemma}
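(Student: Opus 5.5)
The plan is to reduce to the second sphere factor, use rotation equivariance to restrict the possible holomorphic fields, and then fix the signs of the two surviving coefficients by a divergence integration by parts.

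\textbf{Reduction to the sphere factor.} Since $\varphi$ depends only on the second factor, $\nabla^{1,0}\varphi$ is vertical. Holomorphic vector fields on $X=\CP^1\times\CP^1$ split as the direct sum of the two factor algebras, and the horizontal ones are $L^2$-orthogonal to vertical fields. The $L^2$ projection therefore reduces to the second factor with measure $\beta\,dx\,d\theta$. There the holomorphic fields are spanned by $\partial_w$, $w\partial_w$, $w^2\partial_w$.

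\textbf{Rotation equivariance.} The rotation $\theta\mapsto\theta+\alpha$ is an isometry, and it acts by $w\mapsto e^{i\alpha}w$. Under it the fields $\partial_w$, $w\partial_w$, $w^2\partial_w$ carry weights $-1,0,+1$, exactly as in \eqref{eq:radialprojection}. The mode $\varphi=b(x)\cos\theta$ has only angular frequencies $\pm1$, and $P^{\mathrm{hol}}$ commutes with the rotation. Hence the weight-zero component vanishes and $\widehat Z=c_0\partial_w+c_2w^2\partial_w$. Moreover $\partial_w\perp w^2\partial_w$ in $L^2$, because their weights differ. So each coefficient is a single ratio $c=\ip{Y}{\nabla^{1,0}\varphi}/\norm{Y}_2^2$, where $Y=\partial_w$ or $Y=w^2\partial_w$.

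\textbf{Computing the numerators.} With the conjugate-linear-first convention, the pointwise pairing of a $(1,0)$ field $Y$ with $\nabla^{1,0}\varphi$ is, up to a fixed positive factor, $Y\varphi=\partial\varphi(Y)$. To avoid needing a sign for $b'$, I would not compute $\partial_w\varphi$ directly. Instead I would integrate by parts against the measure:
\[
 \int Y\varphi\,d\mu=-\int\varphi\,\diver_\mu Y\,d\mu .
\]
For holomorphic $Y=f\partial_w$ this gives $\diver_\mu Y=f'+f\,\partial_w\log\rho$, where $d\mu=\rho\,dA_w$.

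From \eqref{eq:fibercoordinate} we have $dx=F_\beta\,ds$ and $dA_w=|w|^2ds\,d\theta$, so $\rho\propto F_\beta/|w|^2$. Using $\partial_wx=F_\beta/(2w)$, this yields $\partial_w\log\rho=(F_\beta'-2)/(2w)$. Consequently
\[
 \diver_\mu\partial_w=\frac{F_\beta'-2}{2w},\qquad
 \diver_\mu(w^2\partial_w)=\frac{w(F_\beta'+2)}{2}.
\]
Since $F_\beta''<0$ and $F_\beta'(\mp1)=\pm2$, we get $-2<F_\beta'<2$ in the interior. The $\theta$-integrals $\int e^{\mp i\theta}\cos\theta\,d\theta=\pi$ are positive, $e^{\mp s}>0$, and $b>0$ by \eqref{eq:angularmode}. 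The numerators are therefore real, with the first strictly positive and the second strictly negative. Hence $c_0>0$ and $c_2<0$.

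\textbf{Main obstacle and conclusion.} The main obstacle is bookkeeping. I must check that $\partial_w$ and $w^2\partial_w$ are genuinely in $L^2$ near the poles; their norms are finite because they are smooth fields on $\CP^1$. I must also track the Hermitian conjugation and the $\frac12$ normalizations, so that the real signs are not flipped; an overall positive constant is harmless. Finally, the field $\partial_w$ is nonzero at $w=0$. In the chart $\tilde w=1/w$ we have $w^2\partial_w=-\partial_{\tilde w}$, which is nonzero at $w=\infty$. Combined with $c_0c_2\neq0$, this shows that $\widehat Z$ is nonzero at both poles.
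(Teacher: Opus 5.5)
Your proof is correct and follows the paper's route. Rotation weights kill the $w\partial_w$ coefficient and make $\partial_w\perp w^2\partial_w$. A single integration by parts then gives, up to positive constants, the paper's signed integrals $\int b\,e^{-s}(2-F_\beta')\,dx>0$ and $-\int b\,e^{s}(2+F_\beta')\,dx<0$, whose signs follow from strict concavity. Using the divergence theorem on the closed sphere is a slightly cleaner way to dispose of the endpoint terms, which the paper handles through the square-root behaviour of $b$. One small correction: with the conjugate-linear-first convention the pointwise pairing is $\overline{Y\varphi}$ rather than $Y\varphi$, which is harmless here because both integrals are real.
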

\begin{proof}
The space of holomorphic vector fields on $\CP^1$ is spanned by $\partial_w,w\partial_w,w^2\partial_w$. Rotational weights imply that the middle coefficient is zero. Direct differentiation gives
\[
 \nabla^{1,0}\varphi
 =\frac1\beta\left(b'\cos\theta-i\frac b{F_\beta}\sin\theta\right)v.
\]
With normalized measure $dx\,d\theta/(4\pi)$, integration in $\theta$ and then by parts in $x$ yield
\begin{align}\label{eq:angularcoefficients}
 c_0\norm{\partial_w}_2^2
 &=\frac18\int_{-1}^1 b e^{-s}(2-F_\beta')\dd x>0,\notag\\
 c_2\norm{w^2\partial_w}_2^2
 &=-\frac18\int_{-1}^1 b e^s(2+F_\beta')\dd x<0.
\end{align}
The boundary terms are zero: smoothness of the angular mode gives square-root behavior of $b$ in the polar moment coordinate, while $F_\beta$ vanishes linearly. Strict concavity gives $-2<F_\beta'<2$ in the interior. Finally, $w^2\partial_w=-\partial_{1/w}$ at the upper pole, proving the last assertion.
\end{proof}

\subsection{Variation of the actual spectral projections}
For fixed $\beta$, consider
\begin{equation}\label{eq:mixperturbation}
 \Omega_{\beta,t}=\Omega_\beta+t\,i\partial\dbar(a_1\varphi).
\end{equation}
It is K\"ahler for all sufficiently small real $t$. The three-dimensional eigenvalue-one space is isolated from the radial fourth mode. Analytic spectral projection gives an analytic frame $u_a(t)$ of its continuation, with $u_a(0)=a_a$; see \cite[Chapter VII]{Kato}. We choose its first derivative perpendicular to the original eigenspace. One can obtain this frame by identifying the varying $L^2$ spaces through multiplication by the square root of the volume density and applying the Riesz projection; elliptic regularity gives analytic dependence in every fixed smooth norm. The fourth spectral direction is continued separately.

The determinant system depends only on the four-dimensional subspace, not on the chosen basis. Calculations may therefore be made in this convenient unnormalized frame. A final analytic orthonormalization gives the actual normalized spectral system used for its Fubini--Study current.

\begin{lemma}\label{lem:spectralvariation}
Let $\sigma_{ab}(t)$ be the holomorphic projection of
$\nabla_t^{1,0}u_a(t)\wedge\nabla_t^{1,0}u_b(t)$, for $1\leq a<b\leq3$. Then
\begin{equation}\label{eq:sectionvariation}
 \dot\sigma_{ab}(0)
 =-\kappa_\Lambda(\delta_{1b}Z_a-\delta_{1a}Z_b)\otimes\widehat Z,
 \qquad
 \kappa_\Lambda=\frac{\Lambda+1}{2(\Lambda-1)(\Lambda+2)}>0.
\end{equation}
In particular,
\[
 \dot\sigma_{12}(0)=\kappa_\Lambda Z_2\otimes\widehat Z,
 \qquad
 \dot\sigma_{13}(0)=\kappa_\Lambda Z_3\otimes\widehat Z.
\]
\end{lemma}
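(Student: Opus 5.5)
The plan is to differentiate at $t=0$ and exploit the fact that $\sigma_{ab}(0)=0$ for $1\leq a<b\leq3$. Write $s_{ab}(t)=\nabla_t^{1,0}u_a(t)\wedge\nabla_t^{1,0}u_b(t)$ and let $P_t$ be the holomorphic projection for the metric induced by $\Omega_{\beta,t}$ on $K_X^{-1}$. Every horizontal wedge $Z_a\wedge Z_b$ vanishes, so $s_{ab}(0)=0$. Hence $\dot\sigma_{ab}(0)=P_0\dot s_{ab}(0)+\dot P\,s_{ab}(0)=P_0\dot s_{ab}(0)$.

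This also disposes of two side issues. The final analytic orthonormalization, and any change of basis within the continued eigenspace, alters $\sigma_{ab}$ only by terms proportional to $\sigma_{cd}(0)=0$ at first order. Any mixing with the separately continued fourth direction is excluded by the choice $\dot u_a(0)\perp$ the original eigenspace. The variation of the volume normalization is likewise harmless. So the computation reduces to
\[
 \dot s_{ab}(0)=\dot W_a\wedge Z_b+Z_a\wedge\dot W_b,\qquad \dot W_a=\tfrac{d}{dt}\big|_{0}\nabla_t^{1,0}u_a(t).
\]
Only the fiber ($v$-) component of $\dot W_a$ survives the wedge with a horizontal field.

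Next I split $\dot W_a$ into a metric part and an eigenfunction part. The metric part is $-(g^{-1}\,\partial\dbar\psi\,g^{-1})\dbar a_a$ with $\psi=a_1\varphi$. Its fiber component comes from the mixed entries $\partial a_1\otimes\dbar\varphi$ of $\partial\dbar\psi$. It therefore equals $-\langle Z_a,Z_1\rangle$-type base coefficients times $\nabla^{1,0}\varphi$, schematically $-\tfrac12\,\ip{\nabla a_1}{\nabla a_a}\nabla^{1,0}\varphi$ with the sign to be fixed by the conventions \eqref{eq:conventions}.

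The eigenfunction part is $\nabla^{1,0}\dot u_a$, where $\dot u_a$ solves $(-\Delta-1)\dot u_a=(\dot\Delta)a_a$ orthogonally to the eigenspace. Here $\dot\Delta f=-\ip{i\partial\dbar\psi}{i\partial\dbar f}$. On the round base sphere $i\partial\dbar a=-\tfrac a2\omega_B$, so only the base–base block contributes, and it produces $\varphi$ times the base function $\ip{\nabla a_1}{\nabla a_a}$ plus a multiple of $a_1a_a$. I decompose these base functions into spherical harmonics of degree $0$ and $2$, with complex eigenvalues $0$ and $3$. Since $\varphi$ is an eigenfunction with eigenvalue $\Lambda$ on the fiber, the resolvent inverts explicitly with denominators $\Lambda-1$ and $\Lambda+2$. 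This is the source of the two factors in $\kappa_\Lambda$.

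Both contributions have the form $(\text{base function})\cdot\nabla^{1,0}\varphi$ plus terms with horizontal gradient. The latter are horizontal and are killed by the wedge with $Z_b$. Collecting terms gives
\[
 \dot s_{ab}(0)=\sum(\text{base functions }f_{ab})\,Z_b\otimes\nabla^{1,0}\varphi-(a\leftrightarrow b).
\]
The holomorphic projection on the product with its product structure factors through the Künneth splitting of $H^0(X,K_X^{-1})=H^0(B,K_B^{-1})\otimes H^0(\CP^1,K^{-1})$. So it suffices to project the fiber factor, where Lemma~\ref{lem:angularprojection} gives $\widehat Z$, and the base factor $f\,Z_b$ onto holomorphic base vector fields. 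The base projections use the $SO(3)$-representation theory of the harmonics and \eqref{eq:basecoords}. For the constant and degree-two parts of $\ip{\nabla a_1}{\nabla a_b}$ and $a_1a_b$, the projection of $f Z_b$ onto $\operatorname{span}\{Z_c\}$ is a fixed multiple of $\delta_{1b}$-type combinations. I expect this to yield exactly $-\kappa_\Lambda\delta_{1b}Z_a$ after antisymmetrization.

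The main obstacle is this final bookkeeping of constants. It requires the sign of the metric variation, the precise degree-$0$/degree-$2$ decomposition, and the base Bergman-type projection coefficients. Each must combine into the closed form $\kappa_\Lambda=(\Lambda+1)/(2(\Lambda-1)(\Lambda+2))$, including positivity. I would first verify the structural $\delta_{1b}$ pattern by $SO(2)$ equivariance around the $a_1$-axis, and only then compute the scalar by evaluating at the single test configuration $a=2$, $b=1$.
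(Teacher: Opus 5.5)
You have the reduction right. Since $s_{ab}(0)=0$, neither the variation of the holomorphic projector nor a change of frame within the continued eigenspace contributes. Hence $\dot\sigma_{ab}(0)=P_0\dot s_{ab}(0)$, and on the product the projection factors, with Lemma~\ref{lem:angularprojection} handling the fiber. The gap is that the lemma \emph{is} the explicit formula, and you stop exactly where it has to be computed. Two of the inputs you describe are also wrong in ways that change the answer.

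\emph{The forcing has no gradient term.} Because $\varphi$ does not depend on the base point, $\partial_z\partial_{\bar z}(a_1\varphi)=\varphi\,(a_1)_{z\bar z}$. Together with $(a_a)_{z\bar z}=-a_a(g_B)_{z\bar z}$, this gives exactly $\dot L_0a_a=a_1a_a\varphi$. Its projection onto the eigenspace vanishes because $\varphi$ has zero angular mean, so the eigenvalues do not move to first order. The splitting $a_1a_a=\delta_{1a}/3+(a_1a_a-\delta_{1a}/3)$ then yields $\dot u_a=-\varphi\bigl(\frac{a_1a_a}{\Lambda+2}+\frac{\delta_{1a}}{(\Lambda-1)(\Lambda+2)}\bigr)$.

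\emph{The metric variation contributes nothing.} In product holomorphic coordinates, $\nabla_t^{1,0}f\wedge\nabla_t^{1,0}h=(\det g_t)^{-1}(f_{\bar z}h_{\bar w}-f_{\bar w}h_{\bar z})\,\partial_z\wedge\partial_w$. The numerator vanishes at $t=0$ for horizontal $f,h$, so only $\partial_{\bar w}\dot u_a$ and $\partial_{\bar w}\dot u_b$ enter. In your splitting, the fiber part of the metric term is $-\ip{Z_1}{Z_a}\nabla^{1,0}\varphi$ with the Hermitian product. Since $T^{1,0}B$ is a line, $\ip{Z_1}{Z_a}Z_b$ is symmetric in $a,b$, so this term cancels after antisymmetrization. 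The real coefficient $-\tfrac12\ip{\nabla a_1}{\nabla a_a}$ that you wrote does not cancel. After projection it leaves a spurious $\Lambda$-independent term $\pm\tfrac14(\delta_{1b}Z_a-\delta_{1a}Z_b)\otimes\widehat Z$, which shifts $\kappa_\Lambda$ by $\pm\tfrac14$.

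The other missing step is the base projection, which you only anticipate. You need $a_bZ_a-a_aZ_b=i\varepsilon_{abc}Z_c$, together with $P_B^{\mathrm{hol}}(a_dZ_c)=\tfrac i2\sum_e\varepsilon_{ecd}Z_e$. The latter follows from $\ip{Z_e}{Z_c}=\tfrac12(\delta_{ec}-a_ea_c+i\varepsilon_{ecd}a_d)$, $\ip{Z_e}{Z_c}_{L^2}=\tfrac13\delta_{ec}$, and the vanishing of odd moments. Together these give $P_B^{\mathrm{hol}}\bigl(a_1(a_bZ_a-a_aZ_b)\bigr)=\tfrac12(\delta_{1b}Z_a-\delta_{1a}Z_b)$. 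Adding the $\delta_{1a}$ part of $\dot u_a$ then produces the coefficient $\frac1{2(\Lambda+2)}+\frac1{(\Lambda-1)(\Lambda+2)}=\kappa_\Lambda$.

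Your plan to fix the $\delta_{1b}$ pattern by $SO(2)$-equivariance about the $a_1$-axis does not suffice on its own. Both $a_1Z_1$ and $Z_1$ are invariant under that rotation, so equivariance permits a $Z_1\otimes\widehat Z$ term in $\dot\sigma_{23}(0)$. That term is excluded by the computation $P_B^{\mathrm{hol}}(a_1Z_1)=0$, not by the symmetry you invoke.
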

\begin{proof}
Put $L_t=-\Delta_{\Omega_{\beta,t}}$. Since $(a_a)_{z\bar z}=-a_a(g_B)_{z\bar z}$, variation of the inverse K\"ahler metric gives
\[
 \dot L_0a_a=a_1a_a\varphi.
\]
Its projection onto the original eigenspace is zero, because $\varphi$ has zero angular mean. Thus
\[
 (L_0-1)\dot u_a=-a_1a_a\varphi.
\]
The identity $a_1a_a=\delta_{1a}/3+(a_1a_a-\delta_{1a}/3)$ separates a constant mode from a degree-two spherical harmonic of complex eigenvalue three. Solving on the orthogonal complement gives
\begin{equation}\label{eq:eigenvariation}
 \dot u_a=-\varphi\left(
 \frac{a_1a_a}{\Lambda+2}
 +\frac{\delta_{1a}}{(\Lambda-1)(\Lambda+2)}\right).
\end{equation}

At $t=0$ the horizontal wedge is identically zero. Therefore differentiating the holomorphic projector contributes no term. The derivative of the inverse metric also contributes no term to that wedge: in product holomorphic coordinates,
\[
 \nabla^{1,0}f\wedge\nabla^{1,0}h
 =\frac{f_{\bar z}h_{\bar w}-f_{\bar w}h_{\bar z}}{\det g}
 \partial_z\wedge\partial_w,
\]
and its numerator is zero at $t=0$ for the horizontal functions. Consequently \eqref{eq:eigenvariation} gives
\begin{align}\label{eq:rawwedgevariation}
 \dot s_{ab}(0)=-\left[
 \frac{a_1(a_bZ_a-a_aZ_b)}{\Lambda+2}
 +\frac{\delta_{1b}Z_a-\delta_{1a}Z_b}{(\Lambda-1)(\Lambda+2)}
 \right]\otimes\nabla^{1,0}\varphi.
\end{align}

We spell out the required projection on the round sphere. In its standard coordinate,
\[
 a_1=\frac{z+\bar z}{1+|z|^2},\quad
 a_2=\frac{-i(z-\bar z)}{1+|z|^2},\quad
 a_3=\frac{1-|z|^2}{1+|z|^2},
\]
\[
 Z_1=\tfrac12(1-z^2)\partial_z,\quad
 Z_2=\tfrac i2(1+z^2)\partial_z,\quad
 Z_3=-z\partial_z.
\]
They satisfy
\[
 a_bZ_a-a_aZ_b=i\varepsilon_{abc}Z_c,
\]
\[
 \ip{Z_e}{Z_c}
 =\tfrac12(\delta_{ec}-a_ea_c+i\varepsilon_{ecd}a_d),
 \qquad\ip{Z_e}{Z_c}_{L^2}=\tfrac13\delta_{ec}.
\]
Equation~\eqref{eq:basecoords} and vanishing of odd spherical moments now give
\[
 P_B^{\mathrm{hol}}(a_dZ_c)=\frac i2\sum_e\varepsilon_{ecd}Z_e,
\]
and hence
\begin{equation}\label{eq:baseprojection}
 P_B^{\mathrm{hol}}\bigl(a_1(a_bZ_a-a_aZ_b)\bigr)
 =\tfrac12(\delta_{1b}Z_a-\delta_{1a}Z_b).
\end{equation}
The holomorphic projection on the unperturbed product factors as the tensor product of the two projections. Substituting \eqref{eq:baseprojection} and Lemma~\ref{lem:angularprojection} into \eqref{eq:rawwedgevariation} proves \eqref{eq:sectionvariation}.
\end{proof}

\begin{proposition}\label{prop:basepointfree}
For every sufficiently small fixed $\beta$, the six projected determinants form a basepoint-free system in $H^0(X,\OO(2,2))$ for all sufficiently small nonzero $t$. Its holomorphic evaluation map has complex two-dimensional image and pulls back $\OO(1)$ to $\OO(2,2)$.
\end{proposition}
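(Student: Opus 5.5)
I would argue by first-order perturbation at $t=0$, working in the bundle $K_X^{-1}=\OO(2,2)$. The fixed divisor at $t=0$ is $D_-+D_+$, and a basepoint-free conclusion needs two ingredients. Away from $D_\pm$, the unperturbed system already has no base points. On $D_\pm$, the first-order variation produces nonvanishing sections. Compactness and openness of the non-vanishing condition then propagate both facts to small $t\neq0$.

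\textbf{Step 1 (away from the ends).} At $t=0$ the three sections $\sigma_{a4}(0)=\gamma_\beta Z_a\otimes v$ have no common zero off $D_-\cup D_+$. The $Z_a$ have no common zero on $B$, and $v$ vanishes only at $w=0,\infty$. Fix tubular neighborhoods $N_\pm$ of $D_\pm$. On the compact set $X\setminus(N_-\cup N_+)$ we have $\sum_a|\sigma_{a4}(0)|^2\geq c>0$. The actual orthonormalized spectral system depends analytically on $t$, as explained before Lemma~\ref{lem:spectralvariation}. Hence this lower bound persists for $|t|$ small, and the fourth direction $h_\beta$ is continued separately without issue.

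\textbf{Step 2 (near the ends).} Near $D_\pm$ I would write each section as $\sigma_{ab}(t)=\sigma_{ab}(0)+t\dot\sigma_{ab}(0)+O(t^2)$, uniformly in $C^1$. The terms $\sigma_{a4}(0)$ vanish to first order along $D_\pm$, and the horizontal sections satisfy $\sigma_{ab}(0)=0$ for $a,b\leq3$. By Lemma~\ref{lem:spectralvariation}, $\dot\sigma_{12}(0)=\kappa_\Lambda Z_2\otimes\widehat Z$ and $\dot\sigma_{13}(0)=\kappa_\Lambda Z_3\otimes\widehat Z$. Lemma~\ref{lem:angularprojection} shows that $\widehat Z$ is nonzero at $w=0$ and at $w=\infty$, and $Z_2,Z_3$ have no common zero on $B$, since $Z_2$ vanishes only at $z=\pm i$ and $Z_3$ only at $z=0,\infty$. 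So $\dot\sigma_{12}(0),\dot\sigma_{13}(0)$ have no common zero on a neighborhood of $D_-\cup D_+$, possibly after shrinking $N_\pm$. In $N_\pm$ I would then consider
\[
\sum_{a}|\sigma_{a4}(t)|^2+\sum_{b=2,3}|\sigma_{1b}(t)|^2
\geq c_1\rho^2+c_2t^2-C|t|\rho-Ct^2\rho-Ct^3 ,
\]
where $\rho$ is the distance to $D_\pm$. For $\rho$ and $|t|$ small, with $t\neq0$, this is positive. Indeed, the cross term $|t|\rho$ is absorbed by $c_1\rho^2+c_2t^2$ after weighting the two groups appropriately. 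Alternatively one can use the two groups separately: where $\rho\geq K|t|$ the first sum dominates, and where $\rho\leq K|t|$ the second does, since its correction is $O(t\rho+t^2)$.

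The main obstacle is the second route, where the cutoff $K$ must be chosen correctly. On $\rho\leq K|t|$ the second group has size at least $|t|(c-C\rho-C|t|)$, which requires only that $\rho$ and $|t|$ be small, not a relation between them. On $\rho\geq K|t|$, the first group is $\gamma_\beta|Z_a\otimes v|$ plus a correction $O(|t|)$. Its leading part is at least $c\rho\geq cK|t|$, so choosing $K$ large beats the error. Orthonormalization only multiplies the system by an invertible matrix close to the identity, so base points are unchanged.

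\textbf{Step 3 (the evaluation map).} Basepoint-freeness gives a holomorphic map $\Psi_t:X\to\CP^5$ with $\Psi_t^*\OO(1)=\OO(2,2)$. Its image is two-dimensional because $\int_X(\Psi_t^*\omega_{\FS})^2=\OO(2,2)^2=8>0$, which forces the differential to have generic rank two. For the curve alternative to be excluded one needs exactly this positivity of the self-intersection. The degree of $\Psi_t$ onto its image is not needed.
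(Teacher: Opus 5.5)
Your proposal is correct and follows essentially the paper's argument: $\dot\sigma_{12}(0),\dot\sigma_{13}(0)$ have no common zero on a fixed neighborhood of $D_-\cup D_+$, the unperturbed cross-sections $\sigma_{a4}$ persist on the compact complement, and a standard positivity argument gives the two-dimensional image (you use $\OO(2,2)^2=8>0$, while the paper uses ampleness to get a finite map). The two-regime comparison with the cutoff $K$ in Step 2 is harmless but unnecessary, since $\sigma_{1b}(t)=t\bigl(\dot\sigma_{1b}(0)+O_\beta(t)\bigr)$ uniformly and $|\dot\sigma_{12}(0)|^2+|\dot\sigma_{13}(0)|^2$ is bounded below on the whole neighborhood, which is exactly how the paper concludes.
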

\begin{proof}
The fields $Z_2,Z_3$ have no common zero. By Lemma~\ref{lem:angularprojection}, $\widehat Z$ is nonzero at both poles. Therefore the two derivatives in \eqref{eq:sectionvariation} have no common zero on a fixed neighborhood $U$ of $D_-\cup D_+$. After shrinking $U$, compactness gives a uniform positive lower bound for the sum of their squared norms. Since the two sections themselves vanish identically at $t=0$, their expansions are $t\dot\sigma(0)+O_\beta(t^2)$ in smooth norm. Thus they have no common zero on $U$ for small nonzero $t$.

On the compact complement of $U$, the three sections \eqref{eq:unperturbedsections} have no common zero, and their continuations retain this property. This proves basepoint freeness. A basepoint-free system generating $L$ satisfies $\Psi^*\OO(1)=L$. Since $\OO(2,2)$ is ample, a positive-dimensional fiber would contain a curve on which $L$ has degree zero, which is impossible. Hence the map is finite onto its image, and the image is two-dimensional.
\end{proof}

\subsection{Concentration and the Ricci-preserving diagonal}
Use an orthonormal analytic frame of the first four real spectral directions. Orthogonal changes of this frame act orthogonally on the six exterior products, so the sum of squared norms is intrinsic. Let $\Psi_{\beta,t}:X\to\CP^5$ be the associated map and set
\[
 T_{\beta,t}=\Psi_{\beta,t}^*\omega_{\FS},\qquad
 \int_{\CP^1}\omega_{\FS}=1.
\]
For fixed $\beta$, the holomorphic coefficients converge as $t\to0$ to the system \eqref{eq:unperturbedsections}. Near an end, let $\zeta$ be its holomorphic normal coordinate. The first-order terms in Lemma~\ref{lem:spectralvariation} are independent of the leading cross-section vector: the former have a nonzero component among the first three wedge coordinates, whereas the latter lies among the last three. It follows, after a finite covering of the end divisor, that
\begin{equation}\label{eq:potentialbound}
 c_\beta(|\zeta|^2+t^2)
 \leq\sum_{a<b}|f_{ab,t}|^2
 \leq C_\beta(|\zeta|^2+t^2)
\end{equation}
in local holomorphic line-bundle frames. For $|\zeta|\gg|t|$ the old cross-sections give the lower bound; for $|\zeta|=O(|t|)$ the new sections do so. The same estimate is preserved by analytic orthonormalization.

The logarithmic potentials therefore converge in $L^1_{\mathrm{loc}}$ to the limiting potential. Poincar\'e--Lelong and the homogeneity of the round-sphere determinant system give
\begin{equation}\label{eq:endcurrent}
 T_{\beta,t}\rightharpoonup[D_-]+[D_+]
       +\pi_1^*\frac{\omega_B}{2\pi}\qquad(t\to0).
\end{equation}
The two divisor multiplicities are one. In particular, this is concentration of the moving part, since the systems for $t\neq0$ have no fixed divisor.

Choose $\beta_j\downarrow0$, $\epsilon_j\downarrow0$, and $\delta_j\downarrow0$. Let $U_j=\{|x|>1-\delta_j\}$, and take a smooth cutoff $\chi_j$ supported there and equal to one on both end divisors. For each fixed $\beta_j$, choose a sufficiently small nonzero $t_j$ so that all the following hold. First, the perturbation stays positive and
\[
 \Ric(\Omega_{\beta_j,t_j})\geq(1-\epsilon_j)\Omega_{\beta_j,t_j}.
\]
This is possible because at $t=0$ the difference is at least $\epsilon_j\Omega_{\beta_j}>0$. Second, the first five eigenvalues are as close as needed to those of the product. Third, the metric and measure perturbations are small relative to the product, with relative error tending to zero. Finally, by \eqref{eq:endcurrent}, require
\begin{equation}\label{eq:endcutoff}
 \int_X\chi_jT_{\beta_j,t_j}\wedge\alpha\geq2-j^{-1},
 \qquad\alpha=\pi_1^*\frac{\omega_B}{4\pi}.
\end{equation}
All requirements are compatible since $\beta_j$ is fixed when $t_j$ is chosen.

Set
\[
 \omega_j=(1-\epsilon_j)\Omega_{\beta_j,t_j}.
\]
Ricci forms are unchanged by constant scaling, so $\Ric(\omega_j)\geq\omega_j$. The potential perturbation does not change the K\"ahler class; therefore
\begin{equation}\label{eq:perturbedvolume}
 \Vol_{\omega_j}(X)=16\pi^2(1-\epsilon_j)^2\beta_j\longrightarrow0.
\end{equation}
Continuity of the finite spectrum, \eqref{eq:surfacegap}, and the diagonal choices give $\lambda_4(\omega_j)\to1$ and $\lambda_5(\omega_j)\geq3/2$. A constant metric scaling multiplies the induced line-bundle inner product by a scalar and the gradients by a scalar. It consequently leaves the projective spectral system unchanged.

The new normalized measured limit is still $\CP^1\times I$. The relative measure control gives $\mu_j(U_j)\to0$. These are also shrinking metric end neighborhoods, since \eqref{eq:Acomparison} bounds their meridian width by $C\sqrt{\delta_j}$. On the other hand,
\[
 \int_XT_j\wedge\alpha=c_1(\OO(2,2))\cdot[\alpha]=2.
\]
Together with positivity and \eqref{eq:endcutoff}, this proves
$\int_{U_j}T_j\wedge\alpha\to2$.
Proposition~\ref{prop:basepointfree} proves the remaining assertions of Theorem~\ref{thm:counterintro}.

\subsection{The local ideal and conservation of degree}
The defect has a precise local algebraic description. For fixed $\beta$, complexify the analytic parameter locally. At an end point, one of the new sections has the form $t$ times a unit. One of the old cross-sections has the form $\zeta$ times a unit plus a multiple of $t$. All sections vanish when $t=\zeta=0$. Thus the relative base ideal is exactly
\begin{equation}\label{eq:baseideal}
 \mathfrak b=(t,\zeta).
\end{equation}
Blowing up the two centers $D_\pm\times\{0\}$ makes this ideal invertible and resolves the relative projective map; this is the universal property of the blow-up \cite[Tag 0806]{Stacks}. Each exceptional fiber over a point of an end divisor has coordinates $[\zeta:t]$. Its image is given by
\[
 \zeta A(z)+tB(z),
\]
where $A(z)$ and $B(z)$ are linearly independent by the component argument used in \eqref{eq:potentialbound}. Hence it maps with degree one to a projective line.

On a vertical curve $C=\{z\}\times\CP^1$, the original degree is $\deg\OO(2,2)|_C=2$. The reduced map on the main central component is constant on this curve. The two exceptional components carry the missing degrees:
\begin{equation}\label{eq:degreeconservation}
 2=0+1+1.
\end{equation}
Thus the integers are conserved on the modified total space, but not on the moving system of the principal limiting component alone.

A general one-dimensional version follows from the same calculation. If degree-$d$ polynomial tuples $P_j$ without common zeros converge coefficientwise to $P=QR\neq0$, with $Q$ their limiting greatest common factor and $R$ without common factor, then
\begin{equation}\label{eq:curvequantization}
 [P_j]^*\omega_{\FS}\rightharpoonup[R]^*\omega_{\FS}
             +\sum_z\operatorname{ord}_z(Q)\,\delta_z.
\end{equation}
Indeed, $\log\sum|P_{j,i}|^2$ converges locally in $L^1$ to
$\log|Q|^2+\log\sum|R_i|^2$, by subharmonic compactness and coefficient convergence. Applying $dd^c$ proves \eqref{eq:curvequantization}. Its total defect is the nonnegative integer $\deg Q$. In particular, bounded degree and exact holomorphicity do not by themselves imply absence of end defects.

\section{Logarithmic extensions and integral twisting}

\subsection{What the determinant does not retain}
The endpoint phenomenon has a second consequence. Even when the fixed divisors are retained, the highest exterior power alone need not determine the twisting of a complex filling.

\begin{proposition}\label{prop:logextension}
Let
\[
 X_p=\mathbb P\bigl(\OO\oplus\OO(-p)\bigr)
 \xrightarrow{\pi}B=\CP^{n-1},\qquad p\geq0,
\]
and let $D_-,D_+$ be its two natural disjoint sections, with normal bundles $\OO(-p)$ and $\OO(p)$. Put $D=D_-+D_+$. Then
\begin{equation}\label{eq:logarithmicdet}
 -K_{X_p}-D=\pi^*(-K_B)
\end{equation}
for every $p$. There is an exact sequence
\begin{equation}\label{eq:logextension}
 0\longrightarrow\OO_{X_p}
 \longrightarrow T_{X_p}(-\log D)
 \longrightarrow\pi^*T_B\longrightarrow0.
\end{equation}
In fiber coordinates $w_\alpha=g_{\alpha\beta}(z)w_\beta$, its extension cocycle is represented, with a consistent convention, by $\{-d\log g_{\alpha\beta}\}$. It records the line-bundle Chern class once the vertical generator $w\partial_w$ has been fixed.
\end{proposition}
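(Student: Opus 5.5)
I would prove the three assertions in the order stated, working in local trivializations of the line bundle $N=\OO(-p)$ over standard affine charts $U_\alpha$ of $B$. Write $X_p=\mathbb P(\OO\oplus N)$ with fiber coordinate $w_\alpha$ on $\pi^{-1}(U_\alpha)$, so that $D_-=\{w=0\}$ and $D_+=\{w=\infty\}$, and the transition is $w_\alpha=g_{\alpha\beta}(z)w_\beta$ with $\{g_{\alpha\beta}\}$ a cocycle for $N$ (or its dual, depending on convention). The normal bundles of $D_\mp$ are read off directly from the transition of $w$ and of $1/w$. This fixes signs once and for all; the statement only asks for a consistent convention.

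For \eqref{eq:logextension}, the key local observation is that $T_{X_p}(-\log D)$ is generated over $\pi^{-1}(U_\alpha)$ by $z$-lifts $\partial_{z^i}$ (at fixed $w_\alpha$) together with $w_\alpha\partial_{w_\alpha}$. The latter is tangent to $D$ and has a simple zero along no divisor. Near $D_+$ it equals $-u\partial_u$ with $u=1/w_\alpha$, so it is a logarithmic generator there as well. Since $w_\alpha\partial_{w_\alpha}=w_\beta\partial_{w_\beta}$ (the factor $g_{\alpha\beta}$ cancels), this field is globally well defined and nowhere vanishing as a section of the logarithmic tangent sheaf. It gives the injection $\OO_{X_p}\to T_{X_p}(-\log D)$. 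The quotient is $d\pi$, which is surjective onto $\pi^*T_B$ and kills exactly the vertical logarithmic fields, so the sequence is exact. Taking determinants gives $\det T_{X_p}(-\log D)=\pi^*(-K_B)$. Because $\det T_{X_p}(-\log D)=-K_{X_p}-D$ for a simple normal crossing divisor (here smooth and disjoint), this yields \eqref{eq:logarithmicdet} for every $p$ without any computation involving $p$.

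For the cocycle I would compare the local splittings. On $\pi^{-1}(U_\alpha)$ the lift $\partial_{z^i}|_{w_\alpha}$ splits the sequence. Differentiating $w_\alpha=g_{\alpha\beta}w_\beta$ at fixed $w_\beta$ gives
\[
 \partial_{z^i}|_{w_\beta}=\partial_{z^i}|_{w_\alpha}+(\partial_i\log g_{\alpha\beta})\,w_\alpha\partial_{w_\alpha}.
\]
So the difference of splittings is the $\pi^*\Omega_B$-valued cocycle $\pm d\log g_{\alpha\beta}$, and with the orientation of Čech differences I would fix the sign as $-d\log g_{\alpha\beta}$. Its class in $H^1(X_p,\pi^*\Omega_B)=H^1(B,\Omega_B)$ is the Atiyah/Chern class of $N$, namely $\pm p$ times the hyperplane class, up to $2\pi i$. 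Hence it records $p$ once $w\partial_w$ is fixed; rescaling that generator by a constant only rescales the class.

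I expect the main obstacle to be sign and convention bookkeeping rather than anything substantive. This includes the choice of $g$ versus $g^{-1}$, which normal bundle is $\OO(-p)$, the direction of Čech differences, and the normalization relating $d\log g$ to $c_1$. I would handle it by fixing conventions at the outset via the normal bundle computation and checking them against the case $p=1$, where $X_1$ is the blow-up of $\CP^n$ at a point. The identification $H^1(X_p,\pi^*\Omega_B)\cong H^1(B,\Omega_B)$ uses $\pi_*\OO=\OO$ and $R^1\pi_*\OO=0$ for the $\CP^1$-bundle.
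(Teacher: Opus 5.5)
Your proposal is correct and follows essentially the same route as the paper. The globally defined vertical logarithmic field $w\partial_w$ gives the trivial subsheaf, $d\pi$ gives the quotient, taking determinants gives \eqref{eq:logarithmicdet}, and comparing local horizontal lifts yields exactly the paper's transition \eqref{eq:logtransition}, $\widetilde X_\alpha-\widetilde X_\beta=-(X\log g_{\alpha\beta})v$. Your Leray identification $H^1(X_p,\pi^*\Omega_B)\cong H^1(B,\Omega_B)$ simply makes explicit the paper's brief remark that this class records the Chern class of the line bundle.
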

\begin{proof}
The local vertical logarithmic vector fields $v_\alpha=w_\alpha\partial_{w_\alpha}$ agree on overlaps. They generate the kernel in \eqref{eq:logextension}. For a base vector field $X$, the local horizontal lifts satisfy
\begin{equation}\label{eq:logtransition}
 \widetilde X_\alpha-\widetilde X_\beta
 =-(X\log g_{\alpha\beta})v.
\end{equation}
This gives the displayed extension cocycle. Taking determinants removes the off-diagonal term, so
$\det T_{X_p}(-\log D)=\pi^*\det T_B$,
which is \eqref{eq:logarithmicdet}. The general interpretation of extensions by such Cech cocycles is recalled in \cite[Chapter V, Section 14]{Demailly}.
\end{proof}

For all $p$, the saturated anticanonical line bundle in \eqref{eq:logarithmicdet} is the same pullback from the base. Thus its determinant data do not distinguish the ruled spaces. Moreover, rescaling $v$ rescales the representative of the extension. A complex extension class without a specified primitive integral generator is not yet a twisting integer. For instance, on $\CP^1$, all nonzero classes in the one-dimensional space $\operatorname{Ext}^1(\OO(2),\OO)$ are equivalent under nonzero scalar changes of the left-hand trivialization.

With the primitive circle action $w\mapsto e^{it}w$ fixed, the period is $2\pi$ and
\[
 \frac1{2\pi i}\int_\Gamma d\log g_{\alpha\beta}
\]
recovers the integral transition degree. The normal bundle and the circle bundle then use the same transition functions, so their degrees match. A limiting real Lie algebra direction, without a closed subgroup or integral lattice, does not contain this information by itself.

In the counterexample of Section~\ref{sec:ends}, the actual fixed divisor is zero at every nonzero perturbation parameter and equals $D_-+D_+$ in the limit. Consequently the determinants of the corresponding logarithmic tangent sheaves change from $\OO(2,2)$ to $\OO(2,0)$. Their vertical degrees change from two to zero. The two exceptional end components in \eqref{eq:degreeconservation} account for this difference. If the geometric divisors $D_\pm$ are retained independently throughout this particular product family, its geometric twisting remains zero. The example concerns failure of automatic recovery from the moving spectral system, not failure of topological invariance in an already identified smooth bundle family.

\subsection{A conditional obstruction for holomorphic ruled fillings}
There is a strong obstruction once the needed holomorphic objects actually exist.
\begin{proposition}\label{prop:negativeDivisor}
Let $(M^n, \omega)$ be a compact K\"ahler manifold with $\Ric(\omega)\geq\omega$, $n\geq 2$. If $M$ contains a holomorphic divisor $D\simeq\CP^{n-1}$ with normal bundle $N_{D/M}\simeq\OO(-p)$, $p\geq0$, then
\begin{equation}\label{eq:pbound}
 p\leq n-1.
\end{equation}
If $[\omega]|_D=2\pi nA\,H$, then $0<A\leq1-p/n$.
\end{proposition}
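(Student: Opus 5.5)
Adjunction and restriction of Ricci forms to $D$ will do all the work. Normalize $H$ as the hyperplane class on $D\simeq\CP^{n-1}$, so $\int_{\CP^1}H=1$. Then $c_1(D)=nH$ and $2\pi c_1(\OO(1))=2\pi H$ in de Rham terms. The goal is to compare $[\Ric(\omega)]=2\pi c_1(M)$ with $[\omega]$ along a line $\ell\subset D$.

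First I will evaluate $c_1(M)$ on $\ell$. Adjunction gives $K_D=(K_M+D)|_D$, so
\[
 c_1(M)|_D=c_1(D)+c_1(N_{D/M})=(n-p)H.
\]
Since $\Ric(\omega)\geq\omega$ pointwise and $\ell$ is a complex curve, integrating the nonnegative form $\Ric(\omega)-\omega$ over $\ell$ gives
\[
 2\pi(n-p)=\int_\ell\Ric(\omega)\geq\int_\ell\omega=2\pi nA.
\]
Also $\int_\ell\omega>0$ because $\omega$ is Kähler, so $A>0$. Together these give $0<A\leq 1-p/n$.

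The bound $p\leq n-1$ then follows from $n-p\geq nA>0$, which gives $p<n$, and hence $p\leq n-1$ because $p$ is an integer. Equivalently, $-K_M$ is ample, since this paper already observed that every such $M$ is Fano, so $-K_M\cdot\ell=n-p$ must be positive.

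I expect no real obstacle here. The only points needing care are the sign and normalization conventions: the factor $2\pi$ in $[\Ric]=2\pi c_1$ with the paper's convention $\omega(V,W)=g(JV,W)$, and the meaning of $[\omega]|_D=2\pi nA\,H$. I would check these against the model case. On normalized $\CP^n$ restricted to a hyperplane, $p=-1$ lies outside the stated range, so instead I check $\CP^{n-1}\times\CP^1$ with $D$ a fiber section, where $p=0$ and $A=1$; this is consistent with equality at $p=0$.
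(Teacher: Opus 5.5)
Your proposal is correct and follows the paper's proof. The paper obtains $c_1(TM)|_D=(n-p)H$ from the tangent--normal exact sequence, which is equivalent to your adjunction step. It then integrates $\omega\le\Ric(\omega)$ over a line $\ell\subset D$ to get $0<2\pi nA\le 2\pi(n-p)$, and uses integrality of $p$ to conclude $p\le n-1$.
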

\begin{proof}
The tangent-normal exact sequence gives $c_1(TM)|_D=(n-p)H$. On a holomorphic line $\ell\subset D$,
\[
 0<\int_\ell\omega\leq\int_\ell\Ric(\omega)=2\pi(n-p).
\]
The integrality of $p$ proves \eqref{eq:pbound}; substituting the restricted K\"ahler class gives the last assertion. Holomorphicity of the curve is essential for restricting the positive $(1,1)$-form in this way.
\end{proof}

\begin{proposition}\label{prop:ruledvolume}
On $X_p$ as in Proposition~\ref{prop:logextension}, let $p>0$ and suppose
\[
 [\omega]|_{D_-}=2\pi nA\,H,\qquad
 [\omega]|_{D_+}=2\pi nB\,H.
\]
Then every K\"ahler metric in this class satisfies
\begin{equation}\label{eq:ruledvolume}
 \Vol_\omega(X_p)=\frac{(2\pi n)^n}{n!p}(B^n-A^n),\qquad0<A<B.
\end{equation}
If $\Ric(\omega)\geq\omega$, then
\begin{equation}\label{eq:ruledoscillation}
 B-A\leq
 \left(\frac{n!(n-1)}{(2\pi n)^n}\Vol_\omega(X_p)\right)^{1/n}.
\end{equation}
For $p=0$, the two section restrictions have equal scale $A=B$.
\end{proposition}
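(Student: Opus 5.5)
The plan is to reduce everything to intersection theory on $X_p$, using the standard description of its cohomology ring. Put $F=\pi^*H$ and let $D_\pm$ be the two sections, with normal bundles $N_{D_\pm}\simeq\OO(\pm p)$.

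\textbf{Linear relations.} The first step is the relation $[D_+]=[D_-]+pF$. Both sections are zero loci of sections of the tautological bundles, and the difference of the two divisor classes is pulled back from $B$. Its degree is read off by restricting to $D_+$: there $D_-$ is disjoint and $D_+|_{D_+}=c_1(N_{D_+})=pH$. Since $H^2(X_p,\R)$ is spanned by $F$ and $[D_+]$, I will write
\[
 \frac{[\omega]}{2\pi n}=xF+c[D_+].
\]
Restricting to $D_-$ kills $D_+$, so $x=A$. Restricting to $D_+$ gives $x+cp=B$, so $c=(B-A)/p$. Positivity of $\omega$ on $D_-$ gives $A>0$. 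Positivity on a fiber $\CP^1$, which meets $D_+$ once and $F$ trivially, gives $c>0$ and hence $B>A$.

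\textbf{Volume by induction on $n$.} Let $V_n(A,B)$ denote the top self-intersection of $x=AF+cD_+$ on $X_p\to\CP^{n-1}$. I will use the splitting
\[
 x^n=x^{n-1}\cdot(AF+cD_+).
\]
The term $D_+\cdot x^{n-1}$ is $\int_{D_+}(BH)^{n-1}=B^{n-1}$. The term $F\cdot x^{n-1}$ is computed on the preimage of a hyperplane, which is the same ruled space over $\CP^{n-2}$ with the same restrictions $A,B$; this gives $V_{n-1}(A,B)$. The base case is $n=1$: a $\CP^1$ fiber with $x=cD_+$, so $V_1=c$. The recursion
\[
 V_n=cB^{n-1}+AV_{n-1}
\]
then telescopes to $V_n=(B^n-A^n)/p$, as a one-line check of the inductive step confirms. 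Multiplying by $(2\pi n)^n/n!$ gives \eqref{eq:ruledvolume}. The formula is cohomological, so it holds for every metric in the class.

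\textbf{Oscillation bound and the case $p=0$.} For \eqref{eq:ruledoscillation}, the elementary inequality $B^n-A^n\geq(B-A)^n$ holds for $0<A<B$. Combined with the volume formula, it gives
\[
 (B-A)^n\leq\frac{p\,n!}{(2\pi n)^n}\Vol_\omega(X_p).
\]
Then apply Proposition~\ref{prop:negativeDivisor} to the divisor $D_-\simeq\CP^{n-1}$, whose normal bundle is $\OO(-p)$. It gives $p\leq n-1$, which yields the stated bound. For $p=0$, the relation above becomes $[D_+]=[D_-]$, so the two restrictions coincide and $A=B$.

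The only step needing real care is the intersection calculus: justifying $[D_+]-[D_-]=pF$ and the identification of $F\cdot x^{n-1}$ with the lower-dimensional ruled space. I expect this to be routine, handled by the projective bundle formula or by the explicit $\C^*$-equivariant description.
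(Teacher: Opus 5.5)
Your proof is correct and follows essentially the paper's intersection-theoretic route. You express $[\omega]$ through the two section restrictions, obtain $B>A$ from positivity on a fiber, and combine $(B-A)^n\le B^n-A^n$ with $p\le n-1$ from Proposition~\ref{prop:negativeDivisor} applied to $D_-$. The only difference is how you evaluate $[\omega]^n$. The paper writes $[\omega]=\frac{2\pi n}{p}(BD_+-AD_-)$ and uses $D_+D_-=0$ with $D_\pm^n=(\pm p)^{n-1}$, so all mixed terms vanish at once. Your basis $F,[D_+]$ instead needs the hyperplane-section recursion, but it is valid for every $p\ge0$, so $A>0$ and the case $p=0$ come out of the same two linear relations.
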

\begin{proof}
Regard $D_\pm$ also as divisor classes. They satisfy $D_+-D_-=pH$ and $D_+D_-=0$. The K\"ahler class is
\[
 [\omega]=\frac{2\pi n}{p}(BD_+-AD_-).
\]
Its fiber integral is $2\pi n(B-A)/p$, so $B>A$. The self-intersections are
$\int D_+^n=p^{n-1}$ and $\int D_-^n=(-p)^{n-1}$.
All mixed terms vanish, giving \eqref{eq:ruledvolume}. If the Ricci inequality holds, Proposition~\ref{prop:negativeDivisor} gives $p\leq n-1$. Use also $(B-A)^n\leq B^n-A^n$ to obtain \eqref{eq:ruledoscillation}. When $p=0$, the two sections are homologous.
\end{proof}

These conclusions do not require the metric to be a Calabi-type invariant metric. A holomorphic $\CP^1$-bundle over $\CP^{n-1}$ with two disjoint holomorphic sections has precisely the indicated form: fiber coordinates sending the sections to zero and infinity have transition functions $w_\alpha=g_{\alpha\beta}w_\beta$, which define a line bundle on the base. Since $\operatorname{Pic}(\CP^{n-1})=\Z$, exchanging the sections if necessary gives $\OO(-p)$.

Consequently, any volume-collapsing family with these identified holomorphic sections has $B-A\to0$. This rules out a fixed nonzero transverse change of reduced K\"ahler class within that category. It is not a classification of all limits satisfying $\lambda_{n^2}\to1$. The full spectral action, the anticanonical projection estimate, and the model degree bounds do not by themselves supply such a holomorphic bundle, its sections, or the primitive circle lattice on every approximating manifold.

The results above thus distinguish two kinds of recovery. At multiplicity above $n^2$, the full spectrum and the kernel estimate force a noncollapsed smooth K\"ahler model. At the collapsing threshold, genuine anticanonical sections can still be recovered, but their moving systems can transfer integral degree to end components. Any further classification by complex fillings must retain those components and the logarithmic extension data rather than discard them as errors.

\end{document}